\documentclass[12pt]{article}

\usepackage{amsfonts,amsmath,amsthm,amssymb,amscd}
\usepackage[dvips]{graphicx}

\setcounter{tocdepth}{1}

\sloppy

\theoremstyle{definition}
\newtheorem{theorem}{Theorem}
\newtheorem{lemma}[theorem]{Lemma}
\newtheorem{proposition}[theorem]{Proposition}

\numberwithin{equation}{section}
\numberwithin{theorem}{section}


\setlength{\headheight}{15pt}
\setlength{\footskip}{15pt}
\setlength{\oddsidemargin}{-.3cm}
\setlength{\evensidemargin}{-.3cm}
\setlength{\textwidth}{16cm}
\setlength{\textheight}{20cm}

\begin{document}

\begin{center}
{\bf{\Large Note on some theorem of Farkas and Kra}}
\end{center}

\begin{center}
By Kazuhide Matsuda
\end{center}

\begin{center}
Faculty of Fundamental Science, National Institute of Technology, Niihama College,\\
7-1 Yagumo-chou, Niihama, Ehime, Japan, 792-8580. \\
E-mail: matsuda@sci.niihama-nct.ac.jp  \\
Fax: 81-0897-37-7809 
\end{center}

\noindent
{\bf Abstract}
In this paper, 
we apply high level versions of Jacobi's derivative formula to number theory 
such as quarternary quadratic forms and convolution sums of some arithmetical functions.
\newline
{\bf Key Words:} theta function; theta constant; rational characteristics; Hecke group.
\newline
{\bf MSC(2010)}  14K25;  11E25

\section{Introduction}
\label{intro}
Throughout this paper, let $\mathbb{N}_0,$ $\mathbb{N},$ 
denote the set of nonnegative integers, positive integers, and 
denote the set of squares by $\{x^{2}: x \in \mathbb{Z}\}.$ A {\em triangular number} is defined as $t_x=x(x+1)/2$, where $x\in\mathbb{Z}.$ 
\par
For the positive integers $j,k,$ and $n\in\mathbb{N},$ 
$d_{j,k}(n)$ denotes the number of positive divisors $d$ of $n$ such that $d\equiv j \,\,\mathrm{mod} \,k,$ 
and 
$d_{j,k}^{*}(n)$ denotes the number of positive divisors $d$ of $n$ such that $d\equiv j \,\,\mathrm{mod} \,k$ and $n/d$ is odd,   
which implies that 
$
d_{j,k}^{*}(n)=d_{j,k}(n)-d_{j,k}(n/2). 
$
\par
For each $n\in\mathbb{N},$ 
$\sigma(n)$ is the sum of positive divisors of $n,$ 
and 
$\sigma^{*}(n)$ is the sum of positive divisors $d$ of $n$ such that 
$n/d$ is odd, 
which implies that 
$\sigma^{*}(n)=\sigma(n)-\sigma(n/2).$ 
Moreover, we set 
$d_{j,k}(n)=\sigma(n)=0$ for $n\in\mathbb{Q}\setminus\mathbb{N}_0.$  
\par
The {\it upper half plane} $\mathbb{H}^2$ is defined by 
$
\mathbb{H}^2=
\{
\tau\in\mathbb{C} \,\, | \,\, \Im \tau>0
\}, 
$
and the {\it Dedekind eta function} is defined by 
$
\displaystyle
\eta(\tau)=q^{\frac{1}{24}} \prod_{n=1}^{\infty} (1-q^n), \,\, q=\exp(2\pi i \tau) \,\, \text{for} \, \tau\in\mathbb{H}^2. 
$
\par
Following Farkas and Kra \cite{Farkas-Kra}, 
we introduce the {\it theta function with characteristics,} 
which is defined by 
\begin{align*}
\theta 
\left[
\begin{array}{c}
\epsilon \\
\epsilon^{\prime}
\end{array}
\right] (\zeta, \tau) 
=
\theta 
\left[
\begin{array}{c}
\epsilon \\
\epsilon^{\prime}
\end{array}
\right] (\zeta) 
:=&\sum_{n\in\mathbb{Z}} \exp
\left(2\pi i\left[ \frac12\left(n+\frac{\epsilon}{2}\right)^2 \tau+\left(n+\frac{\epsilon}{2}\right)\left(\zeta+\frac{\epsilon^{\prime}}{2}\right) \right] \right), 
\end{align*}
where $\epsilon, \epsilon^{\prime}\in\mathbb{R}, \, \zeta\in\mathbb{C},$ and $\tau\in\mathbb{H}^{2}.$ 
The {\it theta constants} are given by 
\begin{equation*}
\theta 
\left[
\begin{array}{c}
\epsilon \\
\epsilon^{\prime}
\end{array}
\right]
:=
\theta 
\left[
\begin{array}{c}
\epsilon \\
\epsilon^{\prime}
\end{array}
\right] (0, \tau).
\end{equation*}
Furthermore, 
we denote the derivative coefficients of the theta function by 
\begin{equation*}
\theta^{\prime} 
\left[
\begin{array}{c}
\epsilon \\
\epsilon^{\prime}
\end{array}
\right]
:=\left.
\frac{\partial}{\partial \zeta} 
\theta 
\left[
\begin{array}{c}
\epsilon \\
\epsilon^{\prime}
\end{array}
\right] (\zeta, \tau)
\right|_{\zeta=0}, 
\,
\theta^{\prime \prime} 
\left[
\begin{array}{c}
\epsilon \\
\epsilon^{\prime}
\end{array}
\right]
:=\left.
\frac{\partial^2 }{\partial \zeta^2} 
\theta 
\left[
\begin{array}{c}
\epsilon \\
\epsilon^{\prime}
\end{array}
\right] (\zeta, \tau)
\right|_{\zeta=0}.
\end{equation*}
\par
Farkas and Kra \cite{Farkas-Kra} 
treated the theta constants with {\it rational characteristics,} 
that is, 
the case where $\epsilon$ and $\epsilon^{\prime}$ are both rational numbers, 
and 
derived a number of interesting theta constant identities. 
For each positive integer $k,$ define the {\it Hecke group} $\Gamma_o(k)$ by 
\begin{equation*}
\Gamma_o(k)=
\left\{  
\begin{bmatrix}
a & b \\
c & d
\end{bmatrix}
\in
SL(2,\mathbb{Z});
c\equiv 0 \bmod k 
\right\}.
\end{equation*}
In \cite[pp. 318]{Farkas-Kra}, they proved the following result:
\begin{theorem}
(Farkas and Kra)
\label{thm:FrakasKra}
{\it
For each odd prime $k,$ 
\begin{equation*}
\frac{d}{d \tau} 
\log 
\left(
\frac{\eta(k\tau)}{\eta(\tau)}
\right)
+
\frac{1}{2\pi i (k-2)}
\sum_{l=0}^{\frac{k-3}{2}}
\left(
\frac
{
\theta^{\prime} 
\left[
\begin{array}{c}
1 \\
\frac{1+2l}{k}
\end{array}
\right](0,\tau) 
}
{
\theta
\left[
\begin{array}{c}
1 \\
\frac{1+2l}{k}
\end{array}
\right](0,\tau)
}
\right)^2, \quad \tau\in\mathbb{H}^2, 
\end{equation*}
is a cusp form for $\Gamma_{o}(k).$ 
This form is identically zero provided $k\leq 13,$ $k\neq 11.$ 
}
\end{theorem}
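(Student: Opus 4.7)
The plan is to recognize the two pieces of the expression as weight-$2$ quasi-modular objects whose anomalies cancel, and then show the sum vanishes at each cusp of $\Gamma_o(k)$. Write
\[
\frac{d}{d\tau}\log\frac{\eta(k\tau)}{\eta(\tau)}
\;=\;\frac{2\pi i}{24}\bigl(kE_2(k\tau)-E_2(\tau)\bigr),
\]
using $\frac{d}{d\tau}\log\eta(\tau)=\frac{2\pi i}{24}E_2(\tau)$. Although $E_2$ is only quasi-modular, the combination $E_2(\tau)-kE_2(k\tau)$ is a well-known holomorphic modular form of weight $2$ for $\Gamma_o(k)$; its non-modular piece $\tfrac{6}{\pi i\,c\tau+\ldots}$ is cancelled between the two summands. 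So the first term is weight-$2$ modular on $\Gamma_o(k)$ but it is \emph{not} a cusp form: it is a scalar multiple of the unique Eisenstein series in $M_2(\Gamma_o(k))$ spanning the Eisenstein subspace.

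Next I would analyze the theta sum. For $k$ an odd prime and $0\le l\le(k-3)/2$, the characteristics $\bigl[\begin{smallmatrix}1\\(1+2l)/k\end{smallmatrix}\bigr]$ form a complete set of representatives (up to the sign ambiguity $\epsilon'\mapsto -\epsilon'$) for the non-zero $k$-torsion characteristics with $\epsilon=1$. Under the generators $T:\tau\mapsto\tau+1$ and the typical element $\gamma=\bigl(\begin{smallmatrix}a&b\\ck&d\end{smallmatrix}\bigr)$ of $\Gamma_o(k)$, the standard transformation laws for $\theta\bigl[\begin{smallmatrix}\epsilon\\\epsilon'\end{smallmatrix}\bigr](\zeta,\tau)$ (see Farkas--Kra, Chapter~1) show that $\gamma$ permutes these characteristics modulo signs and simultaneously rescales $\zeta\mapsto\zeta/(c\tau+d)$. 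Differentiating in $\zeta$ and squaring kills all the $8$-th root and sign ambiguities, so each $(\theta'/\theta)^2$ transforms as a weight-$2$ form that is permuted within the sum. Hence the sum $\sum_l(\theta'/\theta)^2$ is weight-$2$ invariant under $\Gamma_o(k)$, up to a quasi-modular anomaly coming from the chain-rule factor in differentiating $\theta$ twice (only one derivative actually appears after squaring, but one must still track the logarithmic derivative transformation); this anomaly is proportional to the same $E_2$-anomaly and is cancelled by the $E_2$-term when the coefficient is chosen to be $1/(2\pi i(k-2))$. I would verify this normalization by expanding both pieces at $\tau=i\infty$ to first non-trivial order in $q=e^{2\pi i\tau}$.

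To show the combination is a cusp form, I must check holomorphy and vanishing at the two cusps $\infty$ and $0$ of $\Gamma_o(k)$ (prime level). At $\infty$, use the product expansions $\theta[1,(1+2l)/k](0,\tau)=2q^{1/8}\sin(\pi(1+2l)/(2k))\prod(1-q^n)(1-q^n\zeta_l)(1-q^n\zeta_l^{-1})$ with $\zeta_l=e^{\pi i(1+2l)/k}$, differentiate logarithmically, square, and sum; the constant term of the sum $\sum_l(\theta'/\theta)^2/(2\pi i(k-2))$ must cancel the constant term $\frac{2\pi i}{24}(k-1)$ of $\frac{d}{d\tau}\log(\eta(k\tau)/\eta(\tau))$. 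At the cusp $0$, apply $\tau\mapsto-1/\tau$: the $\eta$-part swaps $\eta(k\tau)\leftrightarrow\eta(\tau/k)$ and the theta-part transforms via the Jacobi imaginary transformation $\theta\bigl[\begin{smallmatrix}\epsilon\\\epsilon'\end{smallmatrix}\bigr](0,-1/\tau)$ into theta constants with the characteristics swapped, reducing the analysis at $0$ to another expansion of the same shape. The main obstacle I anticipate is precisely these two cusp expansions: bookkeeping the log-derivative squared through the Jacobi imaginary transformation, and pinning down the exact constant $1/(2\pi i(k-2))$. Finally, for the vanishing statement one invokes $\dim S_2(\Gamma_o(k))=0$ for $k\in\{2,3,5,7,13\}$ and $\dim S_2(\Gamma_o(11))=1$, so for every prime $k\le 13$ with $k\ne 11$ the cusp form must be identically zero.
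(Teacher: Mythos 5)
The paper never proves Theorem \ref{thm:FrakasKra}: it is quoted from Farkas--Kra, and the only instance actually established here is $k=3$ (Theorem \ref{thm:1-1/3}), which is proved by a genuinely different and more elementary device --- form the elliptic function $\theta^3\left[\begin{smallmatrix}1\\ 1/3\end{smallmatrix}\right](z)/\theta^3\left[\begin{smallmatrix}1\\ 1\end{smallmatrix}\right](z)$, use the vanishing of the residue at its unique pole $z=0$ to relate $\theta''/\theta$ to $(\theta'/\theta)^2$, and convert via the heat equation and the triple product into the eta-quotient identity. Your route is instead the modular-forms one, and it is sound; it is also the natural way to obtain the full statement for \emph{all} odd primes (the cusp-form assertion), which the residue method does not deliver, at the cost of importing the dimension formula $\dim S_2(\Gamma_o(k))=\mathrm{genus}(X_0(k))=0$ for $k=3,5,7,13$. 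Two remarks on your execution. First, there is no quasi-modular anomaly to cancel in the theta sum: in the transformation law the Gaussian factor $\exp\bigl(\pi i c\zeta^2/(c\tau+d)\bigr)$ has vanishing first $\zeta$-derivative at $\zeta=0$, so each $\theta'/\theta(0,\cdot)$ picks up a clean factor $(c\tau+d)$ while the characteristics $\left[\begin{smallmatrix}1\\ (1+2l)/k\end{smallmatrix}\right]$ are permuted up to sign and integer shifts (one checks the transformed numerator $bk(a-1)+am$ stays odd and prime to $k$); likewise $kE_2(k\tau)-E_2(\tau)$ is already honestly modular. Hence both pieces are exactly weight $2$ on $\Gamma_o(k)$, and the coefficient $1/(2\pi i(k-2))$ is forced not by anomaly cancellation but by vanishing of the constant term at $\infty$, which reduces to the identity $\sum_{l=0}^{(k-3)/2}\tan^2\bigl((2l+1)\pi/(2k)\bigr)=(k-1)(k-2)/6$; the constant term at the cusp $0$ cancels as well, via $\sum_{l=0}^{(k-3)/2}(2l+1)^2=k(k-1)(k-2)/6$. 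Second, since $k$ is prime there are exactly the two cusps you name, so your checklist is complete. With those computations carried out, your argument is correct; it is simply a different (and more global) proof than the local, identity-by-identity method this paper uses for its own results.
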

\par
The aim of this paper is to consider analogues of Theorem \ref{thm:FrakasKra} for $k=4,6,8$ 
and their applications to number theory such as quarternary quadratic forms and convolutions sums of some arihtmetical functions. 
For this purpose, 
we use the results of our papers \cite{Matsuda-1, Matsuda-2, Matsuda-3}, 
where we considered high level versions of 
{\it Jacobi's derivative formula}:  
\begin{equation}
\label{eqn:Jacobi-derivative}
\theta^{\prime} 
\left[
\begin{array}{c}
1 \\
1
\end{array}
\right] 
=
-\pi 
\theta
\left[
\begin{array}{c}
0 \\
0
\end{array}
\right] 
\theta
\left[
\begin{array}{c}
1 \\
0
\end{array}
\right] 
\theta
\left[
\begin{array}{c}
0 \\
1
\end{array}
\right].  
\end{equation}
\par
The remainder of this paper is as follows. 
In Sections 2 and 3, we review the basic properties and the drivative formulas of the theta functions, respectively.  
In Section 4, we deal with $\Gamma_{o}(4)$ and prove 4 squares theorem and 4 triangular numbers theorem. 
\par
In Section 5, we treat $\Gamma_{o}(6)$ and apply the results to the quadratic forms, 
\begin{equation*}
x^2+xy+y^2+z^2+zw+w^2, \,\, x^2+y^2+3z^2+3w^2, \,\,x^2+xy+y^2+2(z^2+zw+w^2),
\end{equation*}
and convolution sums of the arithmetical functions,
\begin{equation*}
\delta^{*}(n)=d^{*}_{1,3}(n)-d_{2,3}^{*}(n), \,\,
\epsilon^{*}(n)=d_{1,6}^{*}(n)+d_{2,6}^{*}(n)-d_{4,6}^{*}(n)-d_{5,6}^{*}(n) \,\,\text{for each $n\in\mathbb{N}$}. 
\end{equation*}
\par
In Section 6, we deal with  $\Gamma_{o}(8)$ and apply the result to the mixed sums of squares and triangular numbers,
\begin{equation*}
x^2+y^2+2z^2+2w^2, \,\,
x^2+2y^2+4t_z+4t_w, \,\,
x^2+y^2+4t_z+4t_w, \,\,
x^2+2y^2+2z^2+4t_w,
\end{equation*}
and
\begin{equation*}
x^2+y^2+z^2+2w^2, \,\,
x^2+2y^2+2z^2+2w^2, \,\,
x^2+y^2+z^2+4t_w, \,\,
x^2+4t_y+4t_z+4t_w.
\end{equation*}

\section{The properties of the theta functions}
\label{sec:properties}

\subsection{Basic properties}
We first note that 
for $m,n\in\mathbb{Z},$ 
\begin{equation}
\label{eqn:integer-char}
\theta 
\left[
\begin{array}{c}
\epsilon \\
\epsilon^{\prime}
\end{array}
\right] (\zeta+n+m\tau, \tau) =
\exp(2\pi i)\left[\frac{n\epsilon-m\epsilon^{\prime}}{2}-m\zeta-\frac{m^2\tau}{2}\right]
\theta 
\left[
\begin{array}{c}
\epsilon \\
\epsilon^{\prime}
\end{array}
\right] (\zeta,\tau),
\end{equation}
and 
\begin{equation}
\theta 
\left[
\begin{array}{c}
\epsilon +2m\\
\epsilon^{\prime}+2n
\end{array}
\right] 
(\zeta,\tau)
=\exp(\pi i \epsilon n)
\theta 
\left[
\begin{array}{c}
\epsilon \\
\epsilon^{\prime}
\end{array}
\right] 
(\zeta,\tau).
\end{equation}
Furthermore, 
it is easy to see that 
\begin{equation*}
\theta 
\left[
\begin{array}{c}
-\epsilon \\
-\epsilon^{\prime}
\end{array}
\right] (\zeta,\tau)
=
\theta 
\left[
\begin{array}{c}
\epsilon \\
\epsilon^{\prime}
\end{array}
\right] (-\zeta,\tau)
\,\,
\mathrm{and}
\,\,
\theta^{\prime} 
\left[
\begin{array}{c}
-\epsilon \\
-\epsilon^{\prime}
\end{array}
\right] (\zeta,\tau)
=
-
\theta^{\prime} 
\left[
\begin{array}{c}
\epsilon \\
\epsilon^{\prime}
\end{array}
\right] (-\zeta,\tau).
\end{equation*}
\par
For $m,n\in\mathbb{R},$ 
we see that 
\begin{align}
\label{eqn:real-char}
&\theta 
\left[
\begin{array}{c}
\epsilon \\
\epsilon^{\prime}
\end{array}
\right] \left(\zeta+\frac{n+m\tau}{2}, \tau\right)   \notag\\
&=
\exp(2\pi i)\left[
-\frac{m\zeta}{2}-\frac{m^2\tau}{8}-\frac{m(\epsilon^{\prime}+n)}{4}
\right]
\theta 
\left[
\begin{array}{c}
\epsilon+m \\
\epsilon^{\prime}+n
\end{array}
\right] 
(\zeta,\tau). 
\end{align}
We note that 
$\theta 
\left[
\begin{array}{c}
\epsilon \\
\epsilon^{\prime}
\end{array}
\right] \left(\zeta, \tau\right)$ has only one zero in the fundamental parallelogram, 
which is given by 
$$
\zeta=\frac{1-\epsilon}{2}\tau+\frac{1-\epsilon^{\prime}}{2}. 
$$

\subsection{Jacobi's triple product identity}
All the theta functions have infinite product expansions, which are given by 
\begin{align}
\theta 
\left[
\begin{array}{c}
\epsilon \\
\epsilon^{\prime}
\end{array}
\right] (\zeta, \tau) &=\exp\left(\frac{\pi i \epsilon \epsilon^{\prime}}{2}\right) x^{\frac{\epsilon^2}{4}} z^{\frac{\epsilon}{2}}    \notag  \\
                           &\quad 
                           \displaystyle \times\prod_{n=1}^{\infty}(1-x^{2n})(1+e^{\pi i \epsilon^{\prime}} x^{2n-1+\epsilon} z)(1+e^{-\pi i \epsilon^{\prime}} x^{2n-1-\epsilon}/z),  \label{eqn:Jacobi-triple}
\end{align}
where $x=\exp(\pi i \tau)$ and $z=\exp(2\pi i \zeta).$ 
Therefore, it follows from Jacobi's derivative formula (\ref{eqn:Jacobi-derivative}) that 
\begin{equation*}
\label{eqn:Jacobi}
\theta^{\prime} 
\left[
\begin{array}{c}
1 \\
1
\end{array}
\right](0,\tau) 
=
-2\pi 
q^{\frac18}
\prod_{n=1}^{\infty}(1-q^n)^3, \,\,q=\exp(2\pi i \tau). 
\end{equation*}

\subsection{Spaces of $N$-th order $\theta$-functions}

Following Farkas and Kra \cite{Farkas-Kra}, 
we define 
$\mathcal{F}_{N}\left[
\begin{array}{c}
\epsilon \\
\epsilon^{\prime}
\end{array}
\right] $ to be the set of entire functions $f$ that satisfy the two functional equations, 
$$
f(\zeta+1)=\exp(\pi i \epsilon) \,\,f(\zeta),
$$
and 
$$
f(\zeta+\tau)=\exp(-\pi i)[\epsilon^{\prime}+2N\zeta+N\tau] \,\,f(\zeta), \quad \zeta\in\mathbb{C},  \,\,\tau \in\mathbb{H}^2,
$$ 
where 
$N$ is a positive integer and 
$\left[
\begin{array}{c}
\epsilon \\
\epsilon^{\prime}
\end{array}
\right] \in\mathbb{R}^2.$ 
This set of functions is called the space of {\it $N$-th order $\theta$-functions with characteristic }
$\left[
\begin{array}{c}
\epsilon \\
\epsilon^{\prime}
\end{array}
\right]. $ 
Note that 
$$
\dim \mathcal{F}_{N}\left[
\begin{array}{c}
\epsilon \\
\epsilon^{\prime}
\end{array}
\right] =N.
$$
For its proof, see Farkas and Kra \cite[pp.133]{Farkas-Kra}.

\subsection{The heat equation}
The theta function satisfies the following heat equation: 
\begin{equation}
\label{eqn:heat}
\frac{\partial^2}{\partial \zeta^2}
\theta
\left[
\begin{array}{c}
\epsilon \\
\epsilon^{\prime}
\end{array}
\right](\zeta,\tau)
=
4\pi i
\frac{\partial}{\partial \tau}
\theta
\left[
\begin{array}{c}
\epsilon \\
\epsilon^{\prime}
\end{array}
\right](\zeta,\tau). 
\end{equation}

\subsection{Lemma of Farkas and Kra}

We recall the lemma of Farkas and Kra \cite[pp.78]{Farkas-Kra}.

\begin{lemma}
\label{lem:Farkas-Kra}
{\it
For 
all characteristics 
$
\left[
\begin{array}{c}
\epsilon \\
\epsilon^{\prime}
\end{array}
\right], 
\left[
\begin{array}{c}
\delta \\
\delta^{\prime}
\end{array}
\right]
$ 
and 
all $\tau\in\mathbb{H}^2,$ 
we have
\begin{align*}
&\theta
\left[
\begin{array}{c}
\epsilon \\
\epsilon^{\prime}
\end{array}
\right](0,\tau)
\theta
\left[
\begin{array}{c}
\delta \\
\delta^{\prime}
\end{array}
\right](0,\tau)  \\
=&
\theta
\left[
\begin{array}{c}
\frac{\epsilon+\delta}{2} \\
\epsilon^{\prime}+\delta^{\prime}
\end{array}
\right](0,2\tau)
\theta
\left[
\begin{array}{c}
\frac{\epsilon-\delta}{2} \\
\epsilon^{\prime}-\delta^{\prime}
\end{array}
\right](0,2\tau)  
+
\theta
\left[
\begin{array}{c}
\frac{\epsilon+\delta}{2}+1 \\
\epsilon^{\prime}+\delta^{\prime}
\end{array}
\right](0,2\tau)
\theta
\left[
\begin{array}{c}
\frac{\epsilon-\delta}{2}+1 \\
\epsilon^{\prime}-\delta^{\prime}
\end{array}
\right](0,2\tau). 
\end{align*}
}
\end{lemma}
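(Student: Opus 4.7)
My plan is to prove this identity by direct manipulation of the defining series. Expanding both theta constants on the left-hand side and multiplying them together gives a double sum
\begin{equation*}
\sum_{m,n\in\mathbb{Z}} \exp\left(2\pi i \left[\frac{\tau}{2}(m+\epsilon/2)^2+\frac{\tau}{2}(n+\delta/2)^2 + (m+\epsilon/2)\frac{\epsilon'}{2}+(n+\delta/2)\frac{\delta'}{2}\right]\right).
\end{equation*}
The decisive algebraic observation is the polarization identity
\begin{equation*}
\tfrac{1}{2}\bigl[(m+\epsilon/2)^2+(n+\delta/2)^2\bigr]=\tfrac{1}{4}\bigl[(m+n+(\epsilon+\delta)/2)^2+(m-n+(\epsilon-\delta)/2)^2\bigr],
\end{equation*}
which decouples the quadratic form into one piece depending only on $m+n$ and another depending only on $m-n$.

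Next I would change variables to $p=m+n$ and $q=m-n$. Since $m=(p+q)/2$ and $n=(p-q)/2$ must be integers, the sum runs over all $(p,q)\in\mathbb{Z}^2$ with $p\equiv q \pmod 2$. The linear part of the exponent also separates cleanly into a function of $p$ with coefficient $(\epsilon'+\delta')/2$ and a function of $q$ with coefficient $(\epsilon'-\delta')/2$, modulo a constant $(\epsilon\epsilon'+\delta\delta')/4$ that is absorbed when one expands the target characteristics. I would then split the lattice $\{(p,q):p\equiv q\pmod 2\}$ into its two cosets: the even-even case $p=2p'$, $q=2q'$ and the odd-odd case $p=2p'+1$, $q=2q'+1$.

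In the even-even case, writing $p/2 = p'$ converts $\frac{\tau}{4}(p+(\epsilon+\delta)/2)^2$ into $\tau(p'+(\epsilon+\delta)/4)^2$, which is exactly the quadratic exponent appearing in $\theta\bigl[\begin{smallmatrix}(\epsilon+\delta)/2\\ \epsilon'+\delta'\end{smallmatrix}\bigr](0,2\tau)$ according to the definition; the $q$-sum gives the second factor with characteristic $(\epsilon-\delta)/2$. In the odd-odd case, the extra $1/2$ from $p=2p'+1$ shifts the characteristic by $1$, producing exactly the second summand on the right-hand side. Summing the two cases reconstructs the unrestricted double sum and yields the identity.

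The main obstacle is the bookkeeping of constant and cross terms: the scalar factor $\exp(2\pi i \cdot (\epsilon\epsilon'+\delta\delta')/4)$ that appears after the change of variables must match the combined scalar factors coming from the $(\epsilon+\delta)/4$ and $(\epsilon-\delta)/4$ shifts in the two RHS theta constants, and in the odd-odd case one must verify that the cross term between the extra $1/2$ and the new characteristic does not introduce a spurious sign. I would verify this by checking the $\exp(2\pi i \cdot \tfrac{1}{4}((\epsilon+\delta)(\epsilon'+\delta')+(\epsilon-\delta)(\epsilon'-\delta')))=\exp(2\pi i\cdot(\epsilon\epsilon'+\delta\delta')/2)$ expansion case by case. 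Once these constants are reconciled, the identity follows immediately.
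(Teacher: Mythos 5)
Your argument is correct. The paper itself gives no proof of this lemma (it is simply recalled from Farkas--Kra, p.~78), and the proof there is exactly the computation you describe: polarize the quadratic form via $a^2+b^2=\tfrac12[(a+b)^2+(a-b)^2]$, change variables to $p=m+n$, $q=m-n$, and split the sublattice $p\equiv q\pmod 2$ into its even--even and odd--odd cosets, which produce the two products of theta constants at $2\tau$. One small reassurance: the bookkeeping you flag as the main obstacle is in fact vacuous, since the linear term obeys the exact bilinear identity $au+bv=\tfrac12[(a+b)(u+v)+(a-b)(u-v)]$ with $a=m+\epsilon/2$, $b=n+\delta/2$, $u=\epsilon'/2$, $v=\delta'/2$; so after the substitution $p=2p'$ (resp.\ $p=2p'+1$) each exponent matches the corresponding term of $\theta\left[\begin{smallmatrix}(\epsilon+\delta)/2\\ \epsilon'+\delta'\end{smallmatrix}\right](0,2\tau)$ (resp.\ the shifted characteristic) term by term, with no residual scalar factor to reconcile.
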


\section{Derivative formulas}

From Matsuda \cite{Matsuda-1, Matsuda-2, Matsuda-3}, 
recall the following derivative formulas:

\begin{theorem}
\label{thm:analogue-Jacobi-derivative}
{\it
For every $\tau\in\mathbb{H}^2,$ we have
\begin{equation}
\label{eqn:analogue-1-1/2}  
\theta^{\prime}
\left[
\begin{array}{c}
1 \\
\frac12
\end{array}
\right](0,\tau)
=
-\pi
\theta^{2}
\left[
\begin{array}{c}
0 \\
0
\end{array}
\right](0,2\tau)
\theta
\left[
\begin{array}{c}
1 \\
\frac12
\end{array}
\right](0,\tau), 
\end{equation}
\begin{equation}
\label{eqn:analogue-0-1/2}
\theta^{\prime}
\left[
\begin{array}{c}
0 \\
\frac12
\end{array}
\right](0,\tau)
=
-\pi
\theta^{2}
\left[
\begin{array}{c}
1 \\
0
\end{array}
\right](0,2\tau)
\theta
\left[
\begin{array}{c}
0 \\
\frac12
\end{array}
\right](0,\tau),  
\end{equation}
\begin{equation}
\label{eqn:analogue-1-1/3,2/3}
\frac
{
\theta^{\prime}
\left[
\begin{array}{c}
1 \\
\frac13
\end{array}
\right]
}
{
\theta
\left[
\begin{array}{c}
1 \\
\frac13
\end{array}
\right]
}
=
\frac16
\theta^{\prime}
\left[
\begin{array}{c}
1 \\
1
\end{array}
\right]
\frac
{
\left(
\theta^4
\left[
\begin{array}{c}
1 \\
\frac13
\end{array}
\right]
-
3
\theta^4
\left[
\begin{array}{c}
1 \\
\frac23
\end{array}
\right]
\right)
}
{
\theta
\left[
\begin{array}{c}
1 \\
0
\end{array}
\right]
\theta
\left[
\begin{array}{c}
1 \\
\frac13
\end{array}
\right]
\theta^3
\left[
\begin{array}{c}
1 \\
\frac23
\end{array}
\right]
}, \quad
\frac
{
\theta^{\prime}
\left[
\begin{array}{c}
1 \\
\frac23
\end{array}
\right]
}
{
\theta
\left[
\begin{array}{c}
1 \\
\frac23
\end{array}
\right]
}
=
\frac13
\theta^{\prime}
\left[
\begin{array}{c}
1 \\
1
\end{array}
\right]
\frac
{
\theta^4
\left[
\begin{array}{c}
1 \\
\frac13
\end{array}
\right]
}
{
\theta
\left[
\begin{array}{c}
1 \\
0
\end{array}
\right]
\theta
\left[
\begin{array}{c}
1 \\
\frac13
\end{array}
\right]
\theta^3
\left[
\begin{array}{c}
1 \\
\frac23
\end{array}
\right]
}, 
\end{equation}
\begin{equation}
\label{eqn:analogue-1-1/4}
\theta^{\prime}
\left[
\begin{array}{c}
1 \\
\frac14
\end{array}
\right](0,\tau)
=
-
\pi
\theta
\left[
\begin{array}{c}
1 \\
\frac14
\end{array}
\right](0,\tau)
\theta
\left[
\begin{array}{c}
0 \\
0
\end{array}
\right](0,4\tau)
\left\{
\sqrt{2}
\theta
\left[
\begin{array}{c}
0 \\
0
\end{array}
\right](0,2\tau)
-
\theta
\left[
\begin{array}{c}
0 \\
0
\end{array}
\right](0,4\tau)
\right\},  
\end{equation}
\begin{equation}
\label{eqn:analogue-1-3/4}
\theta^{\prime}
\left[
\begin{array}{c}
1 \\
\frac34
\end{array}
\right](0,\tau)
=
-
\pi
\theta
\left[
\begin{array}{c}
1 \\
\frac34
\end{array}
\right](0,\tau)
\theta
\left[
\begin{array}{c}
0 \\
0
\end{array}
\right](0,4\tau)
\left\{
\sqrt{2}
\theta
\left[
\begin{array}{c}
0 \\
0
\end{array}
\right](0,2\tau)
+
\theta
\left[
\begin{array}{c}
0 \\
0
\end{array}
\right](0,4\tau)
\right\},
\end{equation}
\begin{equation}
\label{eqn:analogue-0-1/4}
\theta^{\prime}
\left[
\begin{array}{c}
0 \\
\frac14
\end{array}
\right](0,\tau)
=
-
\pi
\theta
\left[
\begin{array}{c}
0 \\
\frac14
\end{array}
\right](0,\tau)
\theta
\left[
\begin{array}{c}
1 \\
0
\end{array}
\right](0,4\tau)
\left\{
\sqrt{2}
\theta
\left[
\begin{array}{c}
0 \\
0
\end{array}
\right](0,2\tau)
-
\theta
\left[
\begin{array}{c}
1 \\
0
\end{array}
\right](0,4\tau)
\right\},  
\end{equation}
\begin{equation}
\label{eqn:analogue-0-3/4}
\theta^{\prime}
\left[
\begin{array}{c}
0 \\
\frac34
\end{array}
\right](0,\tau)
=
-
\pi
\theta
\left[
\begin{array}{c}
0 \\
\frac34
\end{array}
\right](0,\tau)
\theta
\left[
\begin{array}{c}
1 \\
0
\end{array}
\right](0,4\tau)
\left\{
\sqrt{2}
\theta
\left[
\begin{array}{c}
0 \\
0
\end{array}
\right](0,2\tau)
+
\theta
\left[
\begin{array}{c}
1 \\
0
\end{array}
\right](0,4\tau)
\right\}.
\end{equation}
}
\end{theorem}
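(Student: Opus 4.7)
My plan is to treat the seven identities as three subfamilies, according to whether the second characteristic has denominator $2$, $3$, or $4$, and in each case to combine a $\zeta$-dependent version of Lemma~\ref{lem:Farkas-Kra} (the general two-variable addition formula for theta functions, of which the present statement is the specialization to $\zeta=0$; see \cite{Farkas-Kra}) with Jacobi's derivative formula \eqref{eqn:Jacobi-derivative}. The uniform principle is: form a product of theta functions in $\zeta$ in which one factor is the odd function $\theta\bigl[\begin{smallmatrix}1\\1\end{smallmatrix}\bigr](\zeta,\tau)$, so that the product lies in the $N$-dimensional space $\mathcal{F}_{N}\bigl[\begin{smallmatrix}\epsilon\\\epsilon'\end{smallmatrix}\bigr]$ for suitable characteristics; expand it in a basis of theta functions at argument $N\tau$; and differentiate once at $\zeta=0$, so that the vanishing factor $\theta\bigl[\begin{smallmatrix}1\\1\end{smallmatrix}\bigr](0,\tau)=0$ forces only the term carrying $\theta'\bigl[\begin{smallmatrix}1\\1\end{smallmatrix}\bigr](0,\tau)$ to survive on that side, and this derivative is then replaced by the product of three theta constants supplied by \eqref{eqn:Jacobi-derivative}.

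For \eqref{eqn:analogue-1-1/2} I would apply this scheme to $\theta\bigl[\begin{smallmatrix}1\\1/2\end{smallmatrix}\bigr](\zeta,\tau)\,\theta\bigl[\begin{smallmatrix}1\\1\end{smallmatrix}\bigr](\zeta,\tau)\in\mathcal{F}_{2}$, obtaining after simplification a product of theta constants at $\tau$ which a single further use of Lemma~\ref{lem:Farkas-Kra} collapses into $\theta^{2}\bigl[\begin{smallmatrix}0\\0\end{smallmatrix}\bigr](0,2\tau)$; the twin formula \eqref{eqn:analogue-0-1/2} is obtained by the same argument with $\theta\bigl[\begin{smallmatrix}0\\1/2\end{smallmatrix}\bigr](\zeta,\tau)\,\theta\bigl[\begin{smallmatrix}1\\1\end{smallmatrix}\bigr](\zeta,\tau)$ in place of the first product. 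For the denominator-$3$ identities \eqref{eqn:analogue-1-1/3,2/3} the same scheme runs inside the three-dimensional $\mathcal{F}_{3}$: expanding the triple product $\theta\bigl[\begin{smallmatrix}1\\0\end{smallmatrix}\bigr](\zeta,\tau)\,\theta\bigl[\begin{smallmatrix}1\\1/3\end{smallmatrix}\bigr](\zeta,\tau)\,\theta\bigl[\begin{smallmatrix}1\\2/3\end{smallmatrix}\bigr](\zeta,\tau)$ in a basis of translates $\theta\bigl[\begin{smallmatrix}a\\b\end{smallmatrix}\bigr](\zeta,3\tau)$ and differentiating at $\zeta=0$ accounts for both the quartic factors $\theta^{4}\bigl[\begin{smallmatrix}1\\1/3\end{smallmatrix}\bigr]$, $\theta^{4}\bigl[\begin{smallmatrix}1\\2/3\end{smallmatrix}\bigr]$ in the numerators and the rational coefficients $1/6$, $1/3$.

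For the denominator-$4$ formulas \eqref{eqn:analogue-1-1/4}--\eqref{eqn:analogue-0-3/4} I would iterate the denominator-$2$ result: the substitution $\tau\mapsto 2\tau$ in \eqref{eqn:analogue-1-1/2} introduces $\theta\bigl[\begin{smallmatrix}0\\0\end{smallmatrix}\bigr](0,4\tau)$, and combining with the halving identity $\theta^{2}\bigl[\begin{smallmatrix}0\\0\end{smallmatrix}\bigr](0,2\tau)=\theta^{2}\bigl[\begin{smallmatrix}0\\0\end{smallmatrix}\bigr](0,4\tau)+\theta^{2}\bigl[\begin{smallmatrix}1\\0\end{smallmatrix}\bigr](0,4\tau)$ (a direct consequence of Lemma~\ref{lem:Farkas-Kra}), followed by extraction of a square root, produces the braced factor $\sqrt{2}\,\theta\bigl[\begin{smallmatrix}0\\0\end{smallmatrix}\bigr](0,2\tau)\pm\theta\bigl[\begin{smallmatrix}0\\0\end{smallmatrix}\bigr](0,4\tau)$. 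The main obstacle is fixing the branch of this square root and the correct $\pm$ sign in each of \eqref{eqn:analogue-1-1/4}--\eqref{eqn:analogue-0-3/4} separately; since both sides are holomorphic on $\mathbb{H}^{2}$, I would pin the correct choice down by matching the leading $q$-expansion coefficients as $\tau\to i\infty$ via the infinite product \eqref{eqn:Jacobi-triple}, and then conclude by the identity principle.
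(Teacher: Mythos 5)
A preliminary remark: this paper contains no proof of Theorem \ref{thm:analogue-Jacobi-derivative} at all --- the seven formulas are simply recalled from \cite{Matsuda-1, Matsuda-2, Matsuda-3} --- so there is no in-paper argument to compare yours with, and your outline must stand on its own. You have identified the right toolkit (finite-dimensional spaces $\mathcal{F}_N$, the addition theorem behind Lemma \ref{lem:Farkas-Kra}, and Jacobi's formula \eqref{eqn:Jacobi-derivative}), but as written the plan has two genuine gaps.

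First, the central mechanism does not produce the quantity you want. Differentiating a product such as $\theta\bigl[\begin{smallmatrix}1\\1/2\end{smallmatrix}\bigr](\zeta,\tau)\,\theta\bigl[\begin{smallmatrix}1\\1\end{smallmatrix}\bigr](\zeta,\tau)$ at $\zeta=0$ gives, by the product rule,
\begin{equation*}
\theta^{\prime}\!\left[\begin{array}{c}1\\ \frac12\end{array}\right](0,\tau)\,
\theta\!\left[\begin{array}{c}1\\ 1\end{array}\right](0,\tau)
+
\theta\!\left[\begin{array}{c}1\\ \frac12\end{array}\right](0,\tau)\,
\theta^{\prime}\!\left[\begin{array}{c}1\\ 1\end{array}\right](0,\tau),
\end{equation*}
and the vanishing of $\theta\bigl[\begin{smallmatrix}1\\1\end{smallmatrix}\bigr](0,\tau)$ annihilates exactly the term containing the unknown $\theta^{\prime}\bigl[\begin{smallmatrix}1\\1/2\end{smallmatrix}\bigr](0,\tau)$ of \eqref{eqn:analogue-1-1/2}; what survives is the term with $\theta^{\prime}\bigl[\begin{smallmatrix}1\\1\end{smallmatrix}\bigr](0,\tau)$, which is already known from \eqref{eqn:Jacobi-derivative}. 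On the other side, a basis of theta functions ``at argument $N\tau$'' contributes, after differentiation, only first derivatives of theta constants at modulus $2\tau$ (resp.\ $3\tau$). So the scheme yields at best a relation between $\theta^{\prime}\bigl[\begin{smallmatrix}1\\1\end{smallmatrix}\bigr](0,\tau)$ and derivatives of theta constants at a rescaled modulus --- a functional equation across scales --- and you give no argument for how the closed forms \eqref{eqn:analogue-1-1/2}, \eqref{eqn:analogue-0-1/2}, \eqref{eqn:analogue-1-1/3,2/3} are to be extracted from it. The same objection applies verbatim to your $\mathcal{F}_3$ step.

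Second, the denominator-$4$ formulas are not formal consequences of the denominator-$2$ one. Replacing $\tau$ by $2\tau$ in \eqref{eqn:analogue-1-1/2} is a statement about $\theta^{\prime}\bigl[\begin{smallmatrix}1\\1/2\end{smallmatrix}\bigr](0,2\tau)$, which is a different function from $\theta^{\prime}\bigl[\begin{smallmatrix}1\\1/4\end{smallmatrix}\bigr](0,\tau)$; combining it with the halving identity never produces an identity of squares whose root could be \eqref{eqn:analogue-1-1/4} or \eqref{eqn:analogue-1-3/4}. These two formulas carry genuinely new arithmetic content --- for instance their sum is $-2\sqrt{2}\,\pi\,\theta\bigl[\begin{smallmatrix}0\\0\end{smallmatrix}\bigr](0,2\tau)\,\theta\bigl[\begin{smallmatrix}0\\0\end{smallmatrix}\bigr](0,4\tau)$, the generating function for representations by $x^2+2y^2$ --- which cannot be recovered from $\theta^{2}\bigl[\begin{smallmatrix}0\\0\end{smallmatrix}\bigr](0,4\tau)$ by sign choices. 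Matching a leading $q$-coefficient legitimately fixes a sign \emph{after} an identity of squares has been established, but here there is nothing to take the square root of. A complete elementary route does exist: compute $\theta^{\prime}/\theta$ directly as a Lambert series from the logarithmic derivative of \eqref{eqn:Jacobi-triple} and identify the result with the stated theta-constant side via the classical representation theorems for $x^2+y^2$ and $x^2+2y^2$; your outline does not do this, and as it stands the proof is not complete.
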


\section{Case for $\Gamma_{o}(4)$}

\subsection{Case where $\epsilon=1, \epsilon^{\prime}=1/2$}

\subsubsection{Theorem for $\epsilon=1, \epsilon^{\prime}=1/2$}

\begin{theorem}
(Farkas and Kra \cite[pp. 321]{Farkas-Kra})
\label{thm:1-1/2}
{\it
For every $\tau\in\mathbb{H}^2,$ 
we have 
\begin{equation*}
\frac{d}{d\tau} 
\log \frac{\eta(4\tau)}{\eta(\tau)}
+
\frac{1}{2\pi i \cdot 2} 
\left\{
\frac
{
\theta^{\prime} 
\left[
\begin{array}{c}
1 \\
\frac12
\end{array}
\right](0,\tau) 
}
{
\theta
\left[
\begin{array}{c}
1 \\
\frac12
\end{array}
\right](0,\tau)
}
\right\}^2=0. 
\end{equation*}
}
\end{theorem}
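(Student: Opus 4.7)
The plan is to apply the analogue (\ref{eqn:analogue-1-1/2}) of Jacobi's derivative formula to eliminate the theta-quotient, reducing the claim to an identity between two weight-$2$ modular forms on $\Gamma_o(4)$, and then to invoke the triviality of the associated cusp form space. From (\ref{eqn:analogue-1-1/2}) one has
\begin{equation*}
\frac{\theta^{\prime}\left[\begin{array}{c}1\\ \frac12\end{array}\right](0,\tau)}{\theta\left[\begin{array}{c}1\\ \frac12\end{array}\right](0,\tau)} = -\pi\,\theta^{2}\left[\begin{array}{c}0\\ 0\end{array}\right](0,2\tau),
\end{equation*}
so squaring and dividing by $4\pi i$ reduces the theorem to the equivalent identity
\begin{equation*}
\frac{d}{d\tau}\log\frac{\eta(4\tau)}{\eta(\tau)} = \frac{\pi i}{4}\,\theta^{4}\left[\begin{array}{c}0\\ 0\end{array}\right](0,2\tau).
\end{equation*}

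Next I would verify that each side is a holomorphic weight-$2$ modular form on the Hecke group $\Gamma_o(4)$. On the left, the Lambert series for $\log\eta$ yields $\tfrac{d}{d\tau}\log\eta(\tau) = (\pi i/12)E_2(\tau)$, where $E_2(\tau) = 1 - 24\sum_{n\geq 1}\sigma(n)q^n$ is the quasi-modular weight-$2$ Eisenstein series and $q = e^{2\pi i\tau}$; thus the left-hand side equals $-(\pi i/12)\bigl(E_2(\tau) - 4 E_2(4\tau)\bigr)$, a combination in which the quasi-modular corrections cancel to give an honest weight-$2$ modular form on $\Gamma_o(4)$. On the right, $\theta^{4}[0,0](0,2\tau) = \bigl(\sum_{n\in\mathbb{Z}}q^{n^2}\bigr)^{4}$ is a classical weight-$2$ form on $\Gamma_o(4)$, as follows from the transformation law of $\theta[0,0]$ under $\tau\mapsto-1/\tau$ combined with the generators of $\Gamma_o(4)$. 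Both sides have the same constant Fourier coefficient $\pi i/4$ at the cusp $\infty$, so their difference is holomorphic with zero constant term there.

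The main obstacle is to pin down the behavior of the difference at the two remaining cusps $0$ and $\tfrac12$ of $\Gamma_o(4)$ so that it actually lies in $S_2(\Gamma_o(4))$. One handles this by applying the $S$-transformation $\tau\mapsto -1/\tau$ and the real-characteristic translation formula (\ref{eqn:real-char}) to pull both expressions back to local expansions at each cusp, then comparing leading coefficients using only the identities assembled in Sections 2 and 3; circularity with Jacobi's four-squares theorem (which is to be deduced from the present result) is avoided because the comparison only uses transformations of theta constants and of $\eta$, not their Fourier coefficients. Once the difference is seen to vanish at all three cusps, it belongs to $S_2(\Gamma_o(4))$; since $\Gamma_o(4)$ has genus $0$, the dimension formula gives $\dim S_2(\Gamma_o(4)) = 0$, and the theorem follows at once.
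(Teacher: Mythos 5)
Your proposal is correct in substance but follows a genuinely different route from the paper. The paper's proof is an elementary residue computation: it forms the elliptic function $\varphi(z)=\theta\left[\begin{smallmatrix}1\\0\end{smallmatrix}\right](z)\,\theta^2\left[\begin{smallmatrix}1\\ \frac12\end{smallmatrix}\right](z)\,/\,\theta^3\left[\begin{smallmatrix}1\\1\end{smallmatrix}\right](z)$, whose only pole in the fundamental parallelogram is at $z=0$, so the residue there vanishes; expanding to extract that residue gives a relation among second logarithmic derivatives, the heat equation (\ref{eqn:heat}) converts the $\zeta^2$-coefficients into $\tau$-derivatives, and Jacobi's triple product turns the resulting theta quotient into $\eta(4\tau)/\eta(\tau)$ --- at no point is the derivative formula (\ref{eqn:analogue-1-1/2}) used, which is why the paper can later combine Theorem \ref{thm:1-1/2} \emph{with} (\ref{eqn:analogue-1-1/2}) to deduce the four-squares theorem. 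You instead consume (\ref{eqn:analogue-1-1/2}) up front to reduce the claim to $\frac{d}{d\tau}\log\frac{\eta(4\tau)}{\eta(\tau)}=\frac{\pi i}{4}\theta^4\left[\begin{smallmatrix}0\\0\end{smallmatrix}\right](0,2\tau)$, i.e.\ to an identity in $M_2(\Gamma_0(4))$ between $-\frac{\pi i}{12}(E_2(\tau)-4E_2(4\tau))$ and $\frac{\pi i}{4}\bigl(\sum q^{n^2}\bigr)^4$, settled by the vanishing of $S_2(\Gamma_0(4))$. This is legitimate (not circular: the modularity of $\theta^4$ comes from transformation laws, not from the four-squares count) and arguably more conceptual, but it imports machinery the paper deliberately avoids: the quasi-modularity of $E_2$, the dimension formula, and the $S$-transformations of $\theta$ and $\eta$ --- the last of which are \emph{not} among the identities of Sections 2 and 3 as you assert, so your cusp analysis at $0$ and $\frac12$ would have to cite them externally. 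A small simplification available to you: since $\dim M_2(\Gamma_0(4))=2$ and the Sturm bound here is $1$, matching the constant term ($\frac{\pi i}{4}$ on both sides) and the $q^1$-coefficient ($2\pi i$ on both sides) at $\infty$ already forces equality, letting you skip the cusp expansions at $0$ and $\frac12$ entirely.
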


\begin{proof}
Consider the following elliptic function:
\begin{equation*}
\varphi(z)
=
\frac
{
\theta
\left[
\begin{array}{c}
1 \\
0
\end{array}
\right](z)
\theta^2
\left[
\begin{array}{c}
1 \\
\frac12
\end{array}
\right](z) 
}
{
\theta^3
\left[
\begin{array}{c}
1 \\
1
\end{array}
\right](z)
}. 
\end{equation*}
In the fundamental parallelogram, 
the pole of $\varphi(z)$ is $z=0,$ 
which implies that 
$\mathrm{Res} (\varphi(z), 0)=0.$ 
Therefore, it follows that 
\begin{equation*}
\frac
{
\theta^{\prime\prime}
\left[
\begin{array}{c}
1 \\
0
\end{array}
\right]
}
{
\theta
\left[
\begin{array}{c}
1 \\
0
\end{array}
\right]
}
+
2
\frac{
\theta^{\prime\prime}
\left[
\begin{array}{c}
1 \\
\frac12
\end{array}
\right]
}
{
\theta
\left[
\begin{array}{c}
1 \\
\frac12
\end{array}
\right]
}
-
\frac
{
\theta^{\prime\prime \prime}
\left[
\begin{array}{c}
1 \\
1
\end{array}
\right]
}
{
\theta^{\prime}
\left[
\begin{array}{c}
1 \\
1
\end{array}
\right]
}
+
2
\left\{
\frac
{
\theta^{\prime} 
\left[
\begin{array}{c}
1 \\
\frac12
\end{array}
\right]
}
{
\theta
\left[
\begin{array}{c}
1 \\
\frac12
\end{array}
\right]
}
\right\}^2=0. 
\end{equation*}
The heat equation (\ref{eqn:heat}) implies that 
\begin{equation*}
4\pi i 
\frac{d}{d\tau}
\log 
\frac
{
\theta
\left[
\begin{array}{c}
1 \\
0
\end{array}
\right]
\theta^2
\left[
\begin{array}{c}
1 \\
\frac12
\end{array}
\right]
}
{
\theta^{\prime}
\left[
\begin{array}{c}
1 \\
1
\end{array}
\right]
}
+
2
\left\{
\frac
{
\theta^{\prime} 
\left[
\begin{array}{c}
1 \\
\frac12
\end{array}
\right]
}
{
\theta
\left[
\begin{array}{c}
1 \\
\frac12
\end{array}
\right]
}
\right\}^2=0. 
\end{equation*}
The theorem follows from Jacobi's triple product identity (\ref{eqn:Jacobi-triple}). 
\end{proof}

\subsubsection{4 squares theorem}

\begin{theorem}
\label{thm:4-squares}
{\it
For each $n\in\mathbb{N},$ 
\begin{equation*}
S_4(n)=:\sharp
\left\{
(x,y,z,w)\in\mathbb{Z}^4 \, | \, 
x^2+y^2+z^2+w^2=n
\right\}.
\end{equation*}
Then, 
we have 
\begin{equation*}
S_4(n)=
8\sigma(n)-32\sigma(n/4). 
\end{equation*}
}
\end{theorem}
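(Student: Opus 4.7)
The plan is to convert the theta/eta identity of Theorem \ref{thm:1-1/2} into an identity between two $q$-series and then match coefficients. The generating function for $S_4(n)$ is
$$\theta^{4}\left[\begin{array}{c}0\\0\end{array}\right](0,2\tau)=\sum_{n\ge 0}S_4(n)\,q^{n},\qquad q=e^{2\pi i\tau},$$
since $\theta\left[\begin{array}{c}0\\0\end{array}\right](0,2\tau)=\sum_{n\in\mathbb{Z}}q^{n^2}$. So the whole strategy is to express the left-hand side of Theorem \ref{thm:1-1/2} as a constant multiple of this theta power.

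First, I would feed the derivative formula \eqref{eqn:analogue-1-1/2} into Theorem \ref{thm:1-1/2}. Dividing \eqref{eqn:analogue-1-1/2} by $\theta\left[\begin{array}{c}1\\ \frac12\end{array}\right](0,\tau)$ yields
$$\frac{\theta^{\prime}\!\left[\begin{array}{c}1\\ \frac12\end{array}\right](0,\tau)}{\theta\!\left[\begin{array}{c}1\\ \frac12\end{array}\right](0,\tau)}=-\pi\,\theta^{2}\!\left[\begin{array}{c}0\\0\end{array}\right](0,2\tau),$$
so squaring and substituting into Theorem \ref{thm:1-1/2} produces
$$\frac{d}{d\tau}\log\frac{\eta(4\tau)}{\eta(\tau)}=\frac{\pi i}{4}\,\theta^{4}\!\left[\begin{array}{c}0\\0\end{array}\right](0,2\tau).$$

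Next I would expand the left side as a $q$-series. From $\eta(\tau)=q^{1/24}\prod_{n\ge 1}(1-q^{n})$ one gets the standard logarithmic derivative
$$\frac{1}{2\pi i}\frac{d}{d\tau}\log\eta(\tau)=\frac{1}{24}-\sum_{n\ge 1}\sigma(n)q^{n},$$
and replacing $\tau$ by $4\tau$ (which multiplies by $4$ and sends $q$ to $q^{4}$) and subtracting yields
$$\frac{1}{2\pi i}\frac{d}{d\tau}\log\frac{\eta(4\tau)}{\eta(\tau)}=\frac{1}{8}+\sum_{n\ge 1}\bigl[\sigma(n)-4\sigma(n/4)\bigr]q^{n},$$
with the convention $\sigma(n/4)=0$ unless $4\mid n$. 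Equating this with $\tfrac{1}{8}\theta^{4}\!\left[\begin{array}{c}0\\0\end{array}\right](0,2\tau)=\tfrac18+\tfrac18\sum_{n\ge 1}S_4(n)q^{n}$ and comparing the coefficient of $q^{n}$ for $n\ge 1$ gives the formula $S_4(n)=8\sigma(n)-32\sigma(n/4)$.

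Nothing here looks genuinely hard once the identity from Theorem \ref{thm:1-1/2} is in place; the only step requiring care is matching the constant terms and the overall normalization (sign, factor of $\pi i$, the $1/8$ from $\tfrac{1}{2\pi i}\cdot\tfrac{\pi i}{4}$) so that the factor $8$ in front of $\sigma(n)$ comes out correctly. That bookkeeping is where an arithmetic slip would most likely occur, but it is routine rather than a substantive obstacle.
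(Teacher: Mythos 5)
Your proposal is correct and follows essentially the same route as the paper: substitute the derivative formula \eqref{eqn:analogue-1-1/2} into Theorem \ref{thm:1-1/2} to get $\frac{d}{d\tau}\log\frac{\eta(4\tau)}{\eta(\tau)}=\frac{\pi i}{4}\,\theta^{4}\!\left[\begin{smallmatrix}0\\0\end{smallmatrix}\right](0,2\tau)$, then expand both sides as $q$-series and compare coefficients. Your normalization bookkeeping (the $1/8$ constant term and the resulting factors $8$ and $32$) checks out and reproduces the paper's computation.
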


\begin{proof}
Set $q=\exp(2\pi i \tau).$ 
The derivative formula (\ref{eqn:analogue-1-1/2}) and Theorem \ref{thm:1-1/2} imply that 
\begin{equation*}
\frac{d}{d\tau} 
\log \frac{\eta(4\tau)}{\eta(\tau)}
+
\frac{1}{2\pi i \cdot 2} 
\left\{
-\pi
\theta^2
\left[
\begin{array}{c}
0 \\
0
\end{array}
\right](0,2\tau)
\right\}^2=0,
\end{equation*}
which shows that 
\begin{equation*}
1+
\sum_{n=1}^{\infty} S_4(n) q^n
=
1+
8\sum_{n=1}^{\infty} \left( \sigma(n)-4 \sigma(n/4) \right) q^n. 
\end{equation*}
\end{proof}

\subsection{Case where $\epsilon=0, \epsilon^{\prime}=1/2$}

\subsubsection{Theorem for $\epsilon=0, \epsilon^{\prime}=1/2$}

\begin{theorem}
(Farkas and Kra \cite[pp. 321]{Farkas-Kra})
\label{thm:0-1/2}
{\it
For every $\tau\in\mathbb{H}^2,$ 
we have 
\begin{equation*}
\frac{d}{d\tau} 
\log \frac{\eta^3(2\tau)}{\eta^2(\tau) \eta(4\tau)}
+
\frac{1}{2\pi i \cdot 2} 
\left\{
\frac
{
\theta^{\prime} 
\left[
\begin{array}{c}
0 \\
\frac12
\end{array}
\right](0,\tau) 
}
{
\theta
\left[
\begin{array}{c}
0 \\
\frac12
\end{array}
\right](0,\tau)
}
\right\}^2=0. 
\end{equation*}
}
\end{theorem}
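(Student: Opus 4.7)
The plan is to follow the same template as in the proof of Theorem 4.1, replacing the theta function $\theta\bigl[\begin{smallmatrix}1\\1/2\end{smallmatrix}\bigr]$ with $\theta\bigl[\begin{smallmatrix}0\\1/2\end{smallmatrix}\bigr]$ inside a suitably modified elliptic test function. Concretely, I will consider
\begin{equation*}
\varphi(z) \;=\; \frac{\theta\!\left[\begin{array}{c}1\\0\end{array}\right]\!(z)\;\theta^{2}\!\left[\begin{array}{c}0\\ \tfrac12\end{array}\right]\!(z)}{\theta^{3}\!\left[\begin{array}{c}1\\1\end{array}\right]\!(z)}.
\end{equation*}
The first step is to verify, using the quasi-periodicity relations (2.1), that $\varphi$ is doubly periodic: the $\zeta\mapsto \zeta+1$ factors give $(-1)/(-1)=1$ (both numerator and denominator have three factors with $\epsilon=1$ in the exponential, noting that $\theta[0,1/2]$ contributes $\exp(\pi i\cdot 0)=1$), and the $\zeta\mapsto \zeta+\tau$ factors cancel because the sums of $\epsilon'$ in numerator and denominator are $0+\tfrac12+\tfrac12=1$ and $3$, which differ by an even integer. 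Since $\theta[1,1]$ has its unique zero in the fundamental parallelogram at $z=0$, $\varphi$ has a single pole there, of order three, and the sum-of-residues theorem forces $\operatorname{Res}(\varphi,0)=0$.

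Next I will Taylor expand around $z=0$. Using that $\theta[1,0]$ is even in $z$ (whence its Taylor series contains only even powers) and $\theta[1,1]$ is odd, while $\theta[0,1/2]$ has both even and odd Taylor coefficients so that $\theta'\bigl[\begin{smallmatrix}0\\1/2\end{smallmatrix}\bigr]$ contributes genuinely, a short expansion identical to the one in the proof of Theorem 4.1 yields
\begin{equation*}
\frac{\theta''\!\left[\begin{array}{c}1\\0\end{array}\right]}{\theta\!\left[\begin{array}{c}1\\0\end{array}\right]}
+2\,\frac{\theta''\!\left[\begin{array}{c}0\\ \tfrac12\end{array}\right]}{\theta\!\left[\begin{array}{c}0\\ \tfrac12\end{array}\right]}
-\frac{\theta'''\!\left[\begin{array}{c}1\\1\end{array}\right]}{\theta'\!\left[\begin{array}{c}1\\1\end{array}\right]}
+2\!\left\{\frac{\theta'\!\left[\begin{array}{c}0\\ \tfrac12\end{array}\right]}{\theta\!\left[\begin{array}{c}0\\ \tfrac12\end{array}\right]}\right\}^{2}=0.
\end{equation*}
Applying the heat equation (2.7) (and its $\zeta$-derivative, which turns $\theta'''[1,1]/\theta'[1,1]$ into $4\pi i \cdot \frac{d}{d\tau}\log \theta'[1,1]$) bundles the first three terms into a single logarithmic derivative:
\begin{equation*}
4\pi i\,\frac{d}{d\tau}\log\frac{\theta\!\left[\begin{array}{c}1\\0\end{array}\right]\theta^{2}\!\left[\begin{array}{c}0\\ \tfrac12\end{array}\right]}{\theta'\!\left[\begin{array}{c}1\\1\end{array}\right]}
+2\!\left\{\frac{\theta'\!\left[\begin{array}{c}0\\ \tfrac12\end{array}\right]}{\theta\!\left[\begin{array}{c}0\\ \tfrac12\end{array}\right]}\right\}^{2}=0.
\end{equation*}

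The last step is to rewrite the theta quotient as an $\eta$-quotient via Jacobi's triple product identity (2.5). The standard evaluations $\theta[1,0]=2\eta^{2}(2\tau)/\eta(\tau)$, $\theta'[1,1]=-2\pi\eta^{3}(\tau)$, together with $\theta[0,1/2]=\eta^{2}(2\tau)/\eta(4\tau)$ (which one obtains by specializing the product formula and collapsing the resulting $q$-products into the eta quotient), give
\begin{equation*}
\frac{\theta\!\left[\begin{array}{c}1\\0\end{array}\right]\theta^{2}\!\left[\begin{array}{c}0\\ \tfrac12\end{array}\right]}{\theta'\!\left[\begin{array}{c}1\\1\end{array}\right]}
=-\frac{1}{\pi}\left(\frac{\eta^{3}(2\tau)}{\eta^{2}(\tau)\eta(4\tau)}\right)^{2}.
\end{equation*}
Taking $\frac{d}{d\tau}\log$ produces twice the desired eta-quotient logarithmic derivative; dividing the boxed identity by $8\pi i$ yields exactly the factor $\frac{1}{2\pi i\cdot 2}$ and completes the proof.

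The step I expect to be the slightly delicate one is the $\eta$-quotient identification of $\theta[0,1/2]$ (and the bookkeeping of the constant prefactor so that the square on the right-hand side produces the correct doubling, and hence the correct $\tfrac{1}{2\pi i\cdot 2}$ coefficient rather than $\tfrac{1}{2\pi i}$); everything else is a direct transcription of the argument already used for $\epsilon=1,\epsilon'=1/2$.
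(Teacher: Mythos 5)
Your proposal is correct and follows essentially the same route as the paper: the paper's proof uses exactly the elliptic function $\psi(z)=\theta\bigl[\begin{smallmatrix}1\\0\end{smallmatrix}\bigr](z)\,\theta^{2}\bigl[\begin{smallmatrix}0\\1/2\end{smallmatrix}\bigr](z)\big/\theta^{3}\bigl[\begin{smallmatrix}1\\1\end{smallmatrix}\bigr](z)$, the vanishing of its residue at $z=0$, the heat equation, and Jacobi's triple product, yielding the same intermediate identities you write down. Your explicit eta-evaluations ($\theta\bigl[\begin{smallmatrix}0\\1/2\end{smallmatrix}\bigr]=\eta^{2}(2\tau)/\eta(4\tau)$, etc.) and the resulting constant bookkeeping are also correct.
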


\begin{proof}
Consider the following elliptic function:
\begin{equation*}
\psi(z)
=
\frac
{
\theta
\left[
\begin{array}{c}
1 \\
0
\end{array}
\right](z)
\theta^2
\left[
\begin{array}{c}
0 \\
\frac12
\end{array}
\right](z) 
}
{
\theta^3
\left[
\begin{array}{c}
1 \\
1
\end{array}
\right](z)
}. 
\end{equation*}
In the fundamental parallelogram, 
the pole of $\psi(z)$ is $z=0,$ 
which implies that 
$\mathrm{Res} (\psi(z), 0)=0.$ 
Therefore, it follows that 
\begin{equation*}
\frac
{
\theta^{\prime\prime}
\left[
\begin{array}{c}
1 \\
0
\end{array}
\right]
}
{
\theta
\left[
\begin{array}{c}
1 \\
0
\end{array}
\right]
}
+
2
\frac{
\theta^{\prime\prime}
\left[
\begin{array}{c}
0 \\
\frac12
\end{array}
\right]
}
{
\theta
\left[
\begin{array}{c}
0 \\
\frac12
\end{array}
\right]
}
-
\frac
{
\theta^{\prime\prime \prime}
\left[
\begin{array}{c}
1 \\
1
\end{array}
\right]
}
{
\theta^{\prime}
\left[
\begin{array}{c}
1 \\
1
\end{array}
\right]
}
+
2
\left\{
\frac
{
\theta^{\prime} 
\left[
\begin{array}{c}
0 \\
\frac12
\end{array}
\right]
}
{
\theta
\left[
\begin{array}{c}
0 \\
\frac12
\end{array}
\right]
}
\right\}^2=0. 
\end{equation*}
The heat equation (\ref{eqn:heat}) implies that 
\begin{equation*}
4\pi i 
\frac{d}{d\tau}
\log 
\frac
{
\theta
\left[
\begin{array}{c}
1 \\
0
\end{array}
\right]
\theta^2
\left[
\begin{array}{c}
0 \\
\frac12
\end{array}
\right]
}
{
\theta^{\prime}
\left[
\begin{array}{c}
1 \\
1
\end{array}
\right]
}
+
2
\left\{
\frac
{
\theta^{\prime} 
\left[
\begin{array}{c}
0 \\
\frac12
\end{array}
\right]
}
{
\theta
\left[
\begin{array}{c}
0 \\
\frac12
\end{array}
\right]
}
\right\}^2=0. 
\end{equation*}
The theorem follows from Jacobi's triple product identity (\ref{eqn:Jacobi-triple}). 
\end{proof}

\subsubsection{4 triangular numbers theorem}

\begin{theorem}
\label{thm:4-triangular}
{\it
For each $n\in\mathbb{N}_0,$ set 
\begin{equation*}
T_4(n):=
\sharp
\left\{
(x,y,z,w)\in\mathbb{Z}^4 \, | \,
t_x+t_y+t_z+t_w=n
\right\}. 
\end{equation*}
Then, we have 
\begin{equation*}
T_4(n)=16\sigma(2n+1). 
\end{equation*}
}
\end{theorem}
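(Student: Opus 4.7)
The plan is to mimic the derivation of the 4 squares theorem (Theorem~\ref{thm:4-squares}). First, I substitute the derivative formula (\ref{eqn:analogue-0-1/2}), which gives
\begin{equation*}
\frac{\theta^{\prime}\!\left[\begin{array}{c}0\\ \frac12\end{array}\right]\!(0,\tau)}{\theta\!\left[\begin{array}{c}0\\ \frac12\end{array}\right]\!(0,\tau)} = -\pi\,\theta^{2}\!\left[\begin{array}{c}1\\ 0\end{array}\right]\!(0,2\tau),
\end{equation*}
into the cusp-form identity of Theorem~\ref{thm:0-1/2}. Squaring and clearing constants converts that identity into the explicit $q$-series relation
\begin{equation*}
\theta^{4}\!\left[\begin{array}{c}1\\ 0\end{array}\right]\!(0,2\tau) = -\frac{4i}{\pi}\,\frac{d}{d\tau}\log\frac{\eta^{3}(2\tau)}{\eta^{2}(\tau)\,\eta(4\tau)},\qquad q=\exp(2\pi i\tau).
\end{equation*}

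The second step is to expand both sides as power series in $q$ and match coefficients. On the left I reindex the defining sum for $\theta\!\left[\begin{array}{c}1\\ 0\end{array}\right]\!(0,\tau)$ by odd integers $m=2x+1$, $x\in\mathbb{Z}$: since $(2x+1)^{2}=8t_{x}+1$, this gives $\theta\!\left[\begin{array}{c}1\\ 0\end{array}\right]\!(0,2\tau) = 2q^{1/4}\sum_{x\ge 0}q^{2t_{x}}$, so that
\begin{equation*}
\theta^{4}\!\left[\begin{array}{c}1\\ 0\end{array}\right]\!(0,2\tau) = q\sum_{n\ge 0} T_{4}(n)\,q^{2n},
\end{equation*}
a series supported on odd powers of $q$. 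On the right, applying the standard expansion $\frac{d}{d\tau}\log\eta(k\tau) = \frac{k\pi i}{12}\bigl(1-24\sum_{N\ge 1}\sigma(N)q^{kN}\bigr)$ to each of $\eta(\tau)$, $\eta(2\tau)$, $\eta(4\tau)$ and combining produces
\begin{equation*}
-\frac{4i}{\pi}\,\frac{d}{d\tau}\log\frac{\eta^{3}(2\tau)}{\eta^{2}(\tau)\eta(4\tau)} = \sum_{M\ge 1}\bigl(16\sigma(M)-48\sigma(M/2)+32\sigma(M/4)\bigr)q^{M}.
\end{equation*}

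Finally, I compare coefficients of $q^{M}$. For $M=2n+1$ odd, the last two sums on the right vanish by the convention $\sigma(M/2)=\sigma(M/4)=0$, and one reads off $T_{4}(n)=16\sigma(2n+1)$, which is the claim. For $M$ even the left side is zero, and the resulting identity $16\sigma(M)-48\sigma(M/2)+32\sigma(M/4)=0$ reduces to the well-known recursion $\sigma(2m)=3\sigma(m)-2\sigma(m/2)$, which serves as an internal consistency check. I expect the main obstacle to be bookkeeping: tracking the factor $q^{1/4}$ produced by $\theta\!\left[\begin{array}{c}1\\ 0\end{array}\right]\!(0,2\tau)$ in the correct alignment with the shifted index $M=2n+1$ on the right, and verifying that the degree-zero constants $3\cdot 2-2-4=0$ cancel so that the identity is consistent at the $q$-constant term.
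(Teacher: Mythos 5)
Your proposal is correct and follows essentially the same route as the paper: substitute the derivative formula (\ref{eqn:analogue-0-1/2}) into Theorem \ref{thm:0-1/2}, identify $\theta^4\left[\begin{smallmatrix}1\\0\end{smallmatrix}\right](0,2\tau)=q\sum_{n\ge 0}T_4(n)q^{2n}$ via $(2x+1)^2=8t_x+1$, and expand the eta-quotient logarithmic derivative to get $16\sigma(M)-48\sigma(M/2)+32\sigma(M/4)$, which equals $16\sigma(M)$ for odd $M$ and vanishes for even $M$. The paper states these steps without the bookkeeping you supply, and your arithmetic (including the vanishing constant term and the consistency check $\sigma(2m)=3\sigma(m)-2\sigma(m/2)$) is right.
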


\begin{proof}
Set $q=\exp(2\pi i \tau).$ 
The derivative formula (\ref{eqn:analogue-0-1/2}) and Theorem \ref{thm:0-1/2} imply that 
\begin{equation*}
\frac{d}{d\tau} 
\log \frac{\eta^3(2\tau)}{\eta^2(\tau) \eta(4\tau)}
+
\frac{1}{2\pi i \cdot 2} 
\left\{
-\pi
\theta^2
\left[
\begin{array}{c}
1 \\
0
\end{array}
\right](0,2\tau)
\right\}^2=0,
\end{equation*}
which shows that 
\begin{align*}
\sum_{n=0}^{\infty} T_4(n) q^{2n+1}
=&
16\sum_{n=1}^{\infty} \left( \sigma(n)-3 \sigma(n/2) + 2 \sigma(n/4)\right) q^n  \\
=&
16\sum_{n=0}^{\infty} \sigma(2n+1) q^{2n+1}. 
\end{align*}
\end{proof}

\section{Case for $\Gamma_{o}(6)$}
In this section, we introduce the following formula:
\begin{equation}
\label{eqn:formulra-a(q)}
a(q)=:\sum_{m,n\in\mathbb{Z}} q^{m^2+mn+n^2}
=
1+6\sum_{n=1}^{\infty}(d_{1,3}(n)-d_{2,3}(n)) q^n \,\,\mathrm{for} \,\,q\in\mathbb{C} \,\,\mathrm{with} \,\,|q|<1,
\end{equation}
which is equivalent to the following formula: for each $n\in\mathbb{N},$ 
\begin{equation}
\label{eqn:x^2;xy+y^2}
\sharp
\left\{
(x,y)\in\mathbb{Z}^2 \, | \,
x^2+xy+y^2
\right\}
=
6(d_{1,3}(n)-d_{2,3}(n)).
\end{equation}
For the proof of the formula (\ref{eqn:formulra-a(q)}), see Berndt \cite[pp. 79]{Berndt}. 
For the elementary proof of the formula (\ref{eqn:x^2;xy+y^2}), see Dickson \cite[pp. 68]{Dickson}.

\subsection{Preliminary results}

\begin{proposition}
\label{prop:preliminary-1/3-2/3}
{\it
For every $\tau\in\mathbb{H}^2,$ set $q=\exp(2\pi i \tau).$ 
Then, for every $\tau\in\mathbb{H}^2$ and $j=1,2,$ we have 
\begin{equation*}
\frac
{
\theta^{\prime} 
\left[
\begin{array}{c}
1 \\
\frac13
\end{array}
\right](0,\tau) 
}
{
\theta
\left[
\begin{array}{c}
1 \\
\frac13
\end{array}
\right](0,\tau)
}
=
-
\frac{\pi}{\sqrt{3}} a(q),   \,\,
\frac
{
\theta^{\prime} 
\left[
\begin{array}{c}
1 \\
\frac13
\end{array}
\right](0,\tau) 
}
{
\theta
\left[
\begin{array}{c}
1 \\
\frac13
\end{array}
\right](0,\tau)
}
-
\frac
{
\theta^{\prime} 
\left[
\begin{array}{c}
1 \\
\frac23
\end{array}
\right](0,\tau) 
}
{
\theta
\left[
\begin{array}{c}
1 \\
\frac23
\end{array}
\right](0,\tau)
}
=
-2
\frac
{
\theta^{\prime} 
\left[
\begin{array}{c}
1 \\
\frac13
\end{array}
\right](0,2\tau) 
}
{
\theta
\left[
\begin{array}{c}
1 \\
\frac13
\end{array}
\right](0,2\tau)
},
\end{equation*}
\begin{equation*}
\frac
{
\theta^{\prime} 
\left[
\begin{array}{c}
0 \\
\frac{j}{3}
\end{array}
\right](0,\tau) 
}
{
\theta
\left[
\begin{array}{c}
0 \\
\frac{j}{3}
\end{array}
\right](0,\tau)
}
=
\frac
{
\theta^{\prime} 
\left[
\begin{array}{c}
1 \\
\frac{j}{3}
\end{array}
\right](0,\tau/2) 
}
{
\theta
\left[
\begin{array}{c}
1 \\
\frac{j}{3}
\end{array}
\right](0,\tau/2)
}
-
\frac
{
\theta^{\prime} 
\left[
\begin{array}{c}
1 \\
\frac{j}{3}
\end{array}
\right](0,\tau) 
}
{
\theta
\left[
\begin{array}{c}
1 \\
\frac{j}{3}
\end{array}
\right](0,\tau)
}. 
\end{equation*}
}
\end{proposition}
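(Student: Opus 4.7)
My plan is to prove the three identities in the order $(3), (1), (2)$: identity (3) via a direct factorization of theta functions, and identities (1), (2) via Lambert-series expansions obtained from Jacobi's triple product (2.7).

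For identity (3), I establish the $\zeta$-dependent factorization
\begin{equation*}
\theta\!\left[\begin{array}{c} 1 \\ \epsilon'\end{array}\right]\!(\zeta, \tau/2) = C(\tau)\,\theta\!\left[\begin{array}{c} 1 \\ \epsilon'\end{array}\right]\!(\zeta, \tau)\,\theta\!\left[\begin{array}{c} 0 \\ \epsilon'\end{array}\right]\!(\zeta, \tau),
\end{equation*}
valid for all $\zeta \in \mathbb{C}$, with $C(\tau)$ depending only on $\tau$. Using the quasi-periodicity (2.1), one checks that both sides transform identically under $\zeta \mapsto \zeta + 1$ and $\zeta \mapsto \zeta + \tau$, and both vanish precisely at $\zeta = (1 - \epsilon')/2$ and $\zeta = (1 - \epsilon')/2 + \tau/2$ in the fundamental parallelogram of period $\tau$. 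Hence the ratio is entire and doubly periodic, so constant in $\zeta$. Taking the logarithmic $\zeta$-derivative at $\zeta = 0$ yields the claimed identity for $\epsilon' = j/3$, $j = 1, 2$.

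For identity (1), I would apply Jacobi's triple product (2.7) to $\theta[1, 1/3](\zeta, \tau)$ and take the logarithmic $\zeta$-derivative at $\zeta = 0$, obtaining the Lambert series
\begin{equation*}
\frac{\theta'[1, 1/3](0, \tau)}{\theta[1, 1/3](0, \tau)} = \pi i + 2\pi i \sum_{n=1}^{\infty}\left[\frac{\omega x^{2n}}{1+\omega x^{2n}} - \frac{\bar\omega x^{2n-2}}{1+\bar\omega x^{2n-2}}\right],
\end{equation*}
with $\omega = e^{\pi i/3}$ and $x = e^{\pi i \tau}$. The $n = 1$ edge term of the second sum contributes the constant $-2\pi i\bar\omega/(1+\bar\omega) = -\pi/\sqrt{3} - \pi i$, which combines with the initial $\pi i$ to leave exactly $-\pi/\sqrt{3}$, matching the constant term of $-\pi a(q)/\sqrt{3}$. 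Expanding each of the remaining fractional terms as a geometric series and regrouping by the residue of the exponent modulo $3$ (using the identities $\omega + \omega^5 = 1$ and $\omega^2 + \omega^4 = -1$) converts the Lambert series into a divisor sum of the form $-(\pi/\sqrt{3}) \cdot 6 \sum_{n \geq 1}(d_{1, 3}(n) - d_{2, 3}(n))q^n$, matching the expansion (5.1) of $a(q)$.

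For identity (2), once (1) is available, substituting $-\pi a(q)/\sqrt{3}$ and $-\pi a(q^2)/\sqrt{3}$ for the two instances of $\theta'[1, 1/3]/\theta[1, 1/3]$ reduces the claim to the auxiliary formula
\begin{equation*}
\frac{\theta'[1, 2/3](0, \tau)}{\theta[1, 2/3](0, \tau)} = -\frac{\pi}{\sqrt{3}}\bigl(a(q) + 2a(q^2)\bigr).
\end{equation*}
I verify this by the same Lambert-series procedure as in (1), but with the primitive cube root $\omega^2 = e^{2\pi i/3}$ in place of the primitive sixth root $\omega$. Using the factorization $(1+y)(1+\omega^2 y)(1+\omega^4 y) = 1 + y^3$ (coming from the three cube roots of $-1$), each Lambert term splits into a piece indexed by $q^n$ and a piece indexed by $q^{3n}$. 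Reindexing and collecting these contributions reassembles the series into $a(q) + 2a(q^2)$ up to the constant factor $-\pi/\sqrt{3}$. The main obstacle is the $q$-series bookkeeping in (1): precisely identifying the constant $-\pi/\sqrt{3}$ requires careful handling of the $n = 1$ boundary contribution from the $(1 + \bar\omega x^{2n-2}/z)$ factor in Jacobi's triple product, and the regrouping by residue class modulo $3$ that produces the coefficient $6$ of the Lambert series in (5.1).
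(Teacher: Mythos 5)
Your proof is correct, and for the first two identities it is essentially the paper's own route: the paper's proof consists exactly of the four $q$-expansions of $\theta^{\prime}/\theta$ for the characteristics with $\epsilon=1,0$ and $\epsilon^{\prime}=1/3,2/3$ obtained from Jacobi's triple product (\ref{eqn:Jacobi-triple}), from which the identities are read off by divisor-function bookkeeping; your auxiliary formula $\theta^{\prime}\bigl[\begin{smallmatrix}1\\ 2/3\end{smallmatrix}\bigr]/\theta\bigl[\begin{smallmatrix}1\\ 2/3\end{smallmatrix}\bigr]=-\tfrac{\pi}{\sqrt3}\bigl(a(q)+2a(q^2)\bigr)$ is precisely the paper's expansion $-\sqrt3\pi\{1+2\sum_{n\ge1}(d_{1,6}+d_{2,6}-d_{4,6}-d_{5,6})(n)q^n\}$ rewritten via $d_{2,6}(n)=d_{1,3}(n/2)$ and $d_{4,6}(n)=d_{2,3}(n/2)$, and your constant-term computation ($-2\pi i\bar\omega/(1+\bar\omega)+\pi i=-\pi/\sqrt3$) and mod-$3$ regrouping do reproduce the stated Lambert series. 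Where you genuinely diverge is the third identity: the paper obtains it by also expanding the quotients for $\epsilon=0$, $\epsilon^{\prime}=j/3$ and invoking $d^{*}_{j,k}(n)=d_{j,k}(n)-d_{j,k}(n/2)$, i.e.\ comparing the $\epsilon=1$ series at $\tau/2$ and at $\tau$, whereas you prove the $\zeta$-dependent factorization $\theta\bigl[\begin{smallmatrix}1\\ \epsilon^{\prime}\end{smallmatrix}\bigr](\zeta,\tau/2)=C(\tau)\,\theta\bigl[\begin{smallmatrix}1\\ \epsilon^{\prime}\end{smallmatrix}\bigr](\zeta,\tau)\,\theta\bigl[\begin{smallmatrix}0\\ \epsilon^{\prime}\end{smallmatrix}\bigr](\zeta,\tau)$ by matching multipliers and zeros and then take the logarithmic derivative at $\zeta=0$; this is valid (the $\zeta\mapsto\zeta+\tau$ multiplier on the left, a step by twice the period $\tau/2$, indeed equals the product of the two multipliers on the right, and the two simple zeros per parallelogram coincide), and it is arguably cleaner and more structural, since it works for arbitrary $\epsilon^{\prime}$ and needs no divisor arithmetic — it also drops out immediately from the product (\ref{eqn:Jacobi-triple}), whose $z$-dependent factors at modulus $\tau/2$ split into those at modulus $\tau$ with $\epsilon=1$ and $\epsilon=0$. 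The trade-off is that the paper's series computation yields the explicit expansions of the $\epsilon=0$ quotients in terms of $d^{*}_{j,k}$, which are reused later in the convolution-sum theorems, so your shortcut proves the proposition but not those auxiliary expansions.
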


\begin{proof}
Set $x=\exp(\pi i \tau).$ 
Jacobi's triple product identity (\ref{eqn:Jacobi-triple}) yields 
\begin{equation*}
\frac
{
\theta^{\prime} 
\left[
\begin{array}{c}
1 \\
\frac13
\end{array}
\right]
}
{
\theta
\left[
\begin{array}{c}
1 \\
\frac13
\end{array}
\right]
}
=
-
\frac{\pi}{\sqrt{3}}
\left\{
1+ 
6\sum_{n=1}^{\infty}(d_{1,3}(n)-d_{2,3}(n)) q^n
\right\},
\end{equation*}
\begin{equation*}
\frac
{
\theta^{\prime} 
\left[
\begin{array}{c}
1 \\
\frac23
\end{array}
\right]
}
{
\theta
\left[
\begin{array}{c}
1 \\
\frac23
\end{array}
\right]
}
=
-
\sqrt{3}\pi 
\left\{
1+2\sum_{n=1}^{\infty}(d_{1,6}(n)+d_{2,6}(n)-d_{4,6}(n)-d_{5,6}(n)) q^n,
\right\}. 
\end{equation*}
\begin{equation*}
\frac
{
\theta^{\prime} 
\left[
\begin{array}{c}
0 \\
\frac13
\end{array}
\right]
}
{
\theta
\left[
\begin{array}{c}
0 \\
\frac13
\end{array}
\right]
}
=
-2\sqrt{3}\pi \sum_{n=1}^{\infty}(d^{*}_{1,3}(n)-d_{2,3}^{*}(n)) x^n,
\end{equation*}
and
\begin{equation*}
\frac
{
\theta^{\prime} 
\left[
\begin{array}{c}
0 \\
\frac23
\end{array}
\right]
}
{
\theta
\left[
\begin{array}{c}
0 \\
\frac23
\end{array}
\right]
}
=
-2\sqrt{3}\pi \sum_{n=1}^{\infty}(d^{*}_{1,6}(n)+d_{2,6}^{*}(n)-d_{4,6}^{*}(n)-d^{*}_{5,6}(n)) x^n,
\end{equation*}
which prove the proposition. 
\end{proof}

\subsection{Theorem for $\epsilon=1, \epsilon^{\prime}=1/3$}

\subsubsection{Theorem for $\epsilon=1, \epsilon^{\prime}=1/3$}

\begin{theorem}
(Farkas and Kra \cite[pp. 318]{Farkas-Kra})
\label{thm:1-1/3}
{\it
For every $\tau\in\mathbb{H}^2,$ 
we have 
\begin{equation*}
\frac{d}{d\tau} 
\log \frac{\eta(3\tau)}{\eta(\tau)}
+
\frac{1}{2\pi i } 
\left\{
\frac
{
\theta^{\prime} 
\left[
\begin{array}{c}
1 \\
\frac13
\end{array}
\right](0,\tau) 
}
{
\theta
\left[
\begin{array}{c}
1 \\
\frac13
\end{array}
\right](0,\tau)
}
\right\}^2=0. 
\end{equation*}
}
\end{theorem}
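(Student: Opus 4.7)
The plan is to mirror the proof of Theorem \ref{thm:1-1/2}, using an elliptic function tailored so that the square $(\theta'[1,\tfrac13]/\theta[1,\tfrac13])^2$ appears naturally from its Taylor expansion at $z=0$. The natural candidate is
\begin{equation*}
\varphi(z) := \frac{\theta^{3}\!\left[\begin{array}{c}1 \\ \frac{1}{3}\end{array}\right](z,\tau)}{\theta^{3}\!\left[\begin{array}{c}1 \\ 1\end{array}\right](z,\tau)}.
\end{equation*}
Using the automorphy relations (\ref{eqn:integer-char}) I would check that $\varphi$ is doubly periodic: under $z \mapsto z+1$ both numerator and denominator pick up $(-1)^3 = -1$; under $z \mapsto z+\tau$ the numerator gains the factor $\exp(-\pi i(1+6z+3\tau))$ and the denominator gains $\exp(-\pi i(3+6z+3\tau))$, whose ratio is $\exp(2\pi i) = 1$. (The integrality of $3 \cdot \frac13 = 1$ is precisely what makes the cube legitimate for $\epsilon' = 1/3$, whereas the analogous cube fails for $\epsilon'=1/2$.)

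The only pole of $\varphi$ in a fundamental parallelogram is a triple pole at $z=0$, so $\mathrm{Res}(\varphi, 0) = 0$. I would extract this residue by expanding $\theta[1,\tfrac13](z,\tau) = A + A' z + \tfrac12 A'' z^2 + \cdots$ together with the odd expansion $\theta[1,1](z,\tau) = D' z + \tfrac16 D''' z^3 + \cdots$. The coefficient of $z^2$ in $\theta[1,\tfrac13](z)^3$ splits into a diagonal piece $\tfrac{3}{2} A^2 A''$ and a cross piece $3 A (A')^2$, while $1/\theta[1,1]^3$ contributes a $z^2$ correction $-\tfrac{1}{2}D'''/D'$. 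Collecting these and dividing out $\tfrac{3}{2}A^3/D'^3$ yields
\begin{equation*}
\frac{\theta''[1,\tfrac13]}{\theta[1,\tfrac13]} + 2\left(\frac{\theta'[1,\tfrac13]}{\theta[1,\tfrac13]}\right)^{\!2} - \frac{1}{3}\frac{\theta'''[1,1]}{\theta'[1,1]} = 0.
\end{equation*}

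Applying the heat equation (\ref{eqn:heat}) rewrites $\theta''/\theta = 4\pi i\, \partial_\tau \log \theta[1,\tfrac13]$ and $\theta'''[1,1]/\theta'[1,1] = 4\pi i\, \partial_\tau \log \theta'[1,1]$, converting the identity to
\begin{equation*}
4\pi i \, \frac{d}{d\tau} \log \frac{\theta[1,\tfrac13](0,\tau)}{\theta'[1,1](0,\tau)^{1/3}} + 2\left(\frac{\theta'[1,\tfrac13]}{\theta[1,\tfrac13]}\right)^{\!2} = 0.
\end{equation*}
To finish I would identify the eta-quotient via Jacobi's triple product (\ref{eqn:Jacobi-triple}). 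Directly from the product formula, $\theta'[1,1](0,\tau)^{1/3}$ is a constant multiple of $\eta(\tau)$, while the cyclotomic collapse
\begin{equation*}
\prod_{n \geq 1} (1 + e^{\pi i/3} q^n)(1 + e^{-\pi i/3} q^n) = \prod_{n \geq 1} (1 + q^n + q^{2n}) = \prod_{n \geq 1} \frac{1 - q^{3n}}{1 - q^n}
\end{equation*}
reduces the expansion of $\theta[1,\tfrac13](0,\tau)$ to a constant multiple of $q^{1/8}\prod(1 - q^{3n}) = \eta(3\tau)$. The theorem then follows on dividing by $4\pi i$.

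The main obstacle is the residue bookkeeping: one has to cube $\theta[1,\tfrac13](z)$ through order $z^3$ and carefully track the $(\theta')^2$ cross term alongside the $\theta \cdot \theta''$ term, since the former is what produces the square appearing in the theorem. The cyclotomic collapse in the last step is the non-trivial point at which $\eta(3\tau)$ rather than $\eta(\tau)$ emerges as the relevant factor, and it is also where the choice of $\varphi$ (with $\epsilon'=\tfrac13$) imprints itself on the level-$3$ structure of the final identity.
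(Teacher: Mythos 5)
Your proposal is correct and follows essentially the same route as the paper: the same elliptic function $\theta^{3}\left[\begin{smallmatrix}1\\ \frac13\end{smallmatrix}\right](z)/\theta^{3}\left[\begin{smallmatrix}1\\ 1\end{smallmatrix}\right](z)$, the vanishing-residue identity (your version is the paper's equation (\ref{eqn:relation-for-thm-1-1/3}) divided by $3$), the heat equation, and the Jacobi triple product collapse identifying $\theta\left[\begin{smallmatrix}1\\ \frac13\end{smallmatrix}\right]$ with a multiple of $\eta(3\tau)$. Your write-up merely makes explicit the periodicity check and residue bookkeeping that the paper leaves implicit.
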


\begin{proof}
Consider the following elliptic function:
\begin{equation*}
\varphi(z)
=
\frac
{
\theta^3
\left[
\begin{array}{c}
1 \\
\frac13
\end{array}
\right](z) 
}
{
\theta^3
\left[
\begin{array}{c}
1 \\
1
\end{array}
\right](z)
}. 
\end{equation*}
In the fundamental parallelogram, 
the pole of $\varphi(z)$ is $z=0,$ 
which implies that 
$\mathrm{Res} (\varphi(z), 0)=0.$ 
Therefore, it follows that 
\begin{equation}
\label{eqn:relation-for-thm-1-1/3}
3
\frac{
\theta^{\prime\prime}
\left[
\begin{array}{c}
1 \\
\frac13
\end{array}
\right]
}
{
\theta
\left[
\begin{array}{c}
1 \\
\frac13
\end{array}
\right]
}
-
\frac
{
\theta^{\prime\prime \prime}
\left[
\begin{array}{c}
1 \\
1
\end{array}
\right]
}
{
\theta^{\prime}
\left[
\begin{array}{c}
1 \\
1
\end{array}
\right]
}
+
6
\left\{
\frac
{
\theta^{\prime} 
\left[
\begin{array}{c}
1 \\
\frac13
\end{array}
\right]
}
{
\theta
\left[
\begin{array}{c}
1 \\
\frac13
\end{array}
\right]
}
\right\}^2=0. 
\end{equation}
The heat equation (\ref{eqn:heat}) implies that 
\begin{equation*}
4\pi i 
\frac{d}{d\tau}
\log 
\frac
{
\theta^3
\left[
\begin{array}{c}
1 \\
\frac13
\end{array}
\right]
}
{
\theta^{\prime}
\left[
\begin{array}{c}
1 \\
1
\end{array}
\right]
}
+
6
\left\{
\frac
{
\theta^{\prime} 
\left[
\begin{array}{c}
1 \\
\frac13
\end{array}
\right]
}
{
\theta
\left[
\begin{array}{c}
1 \\
\frac13
\end{array}
\right]
}
\right\}^2=0. 
\end{equation*}
The theorem follows from Jacobi's triple product identity (\ref{eqn:Jacobi-triple}). 
\end{proof}

\subsubsection{On $x^2+xy+y^2+z^2+zw+w^2$}

\begin{theorem}
\label{thm:x^2+xy+y^2-(1,1)}
{\it
For each $n\in\mathbb{N},$ set 
\begin{equation*}
s_2(n):=
\sharp
\left\{
(x,y,z,w)\in\mathbb{Z}^4 \, | \,
x^2+xy+y^2+z^2+zw+w^2=n
\right\}. 
\end{equation*}
Then, we have 
\begin{equation*}
s_2(n)=12\sigma(n)-36\sigma(n/3). 
\end{equation*}
}
\end{theorem}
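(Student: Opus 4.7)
The plan is to convert the theta-quotient identity of Theorem \ref{thm:1-1/3} into a statement about $q$-series and then read off coefficients. Since the number of representations of $n$ by $x^2+xy+y^2$ is the coefficient of $q^n$ in $a(q)$, and $s_2(n)$ counts representations by a sum of two independent such quadratic forms, we have immediately
\begin{equation*}
\sum_{n=0}^{\infty} s_2(n)\,q^n = a(q)^2,\qquad q=\exp(2\pi i\tau).
\end{equation*}
Thus the theorem reduces to showing that $a(q)^2 = 1 + \sum_{n\ge 1}(12\sigma(n)-36\sigma(n/3))q^n$.

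The first half of Proposition \ref{prop:preliminary-1/3-2/3} gives $\theta'[^{1}_{1/3}]/\theta[^{1}_{1/3}](0,\tau)=-(\pi/\sqrt{3})\,a(q)$, so squaring yields
\begin{equation*}
\left(\frac{\theta^{\prime}[^{1}_{1/3}](0,\tau)}{\theta[^{1}_{1/3}](0,\tau)}\right)^{\!2}=\frac{\pi^2}{3}\,a(q)^2.
\end{equation*}
Plugging this into the identity of Theorem \ref{thm:1-1/3} gives
\begin{equation*}
\frac{d}{d\tau}\log\frac{\eta(3\tau)}{\eta(\tau)} = -\frac{1}{2\pi i}\cdot\frac{\pi^2}{3}\,a(q)^2 = \frac{\pi i}{6}\,a(q)^2.
\end{equation*}

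It remains to expand the logarithmic derivative of $\eta(3\tau)/\eta(\tau)$ as a $q$-series. Using $\eta(\tau)=q^{1/24}\prod_{n\ge1}(1-q^n)$ together with the standard Lambert-series identity $\sum_{n\ge1}\frac{nq^n}{1-q^n}=\sum_{n\ge1}\sigma(n)q^n$, a direct computation yields
\begin{equation*}
\frac{d}{d\tau}\log\eta(\tau)=\frac{\pi i}{12}-2\pi i\sum_{n=1}^{\infty}\sigma(n)q^n,
\end{equation*}
and applying the chain rule to $\eta(3\tau)$ and subtracting gives
\begin{equation*}
\frac{d}{d\tau}\log\frac{\eta(3\tau)}{\eta(\tau)}=\frac{\pi i}{6}+2\pi i\sum_{n=1}^{\infty}\bigl(\sigma(n)-3\sigma(n/3)\bigr)q^n.
\end{equation*}
Equating this to $\frac{\pi i}{6}a(q)^2$, dividing by $\pi i/6$, and comparing coefficients of $q^n$ for $n\ge1$ gives the claimed formula $s_2(n)=12\sigma(n)-36\sigma(n/3)$.

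The only real obstacle is bookkeeping: matching the constant term $\pi i/6$ on both sides (which pairs with $a(0)^2=1$) and verifying the chain-rule factor of $3$ that produces the $\sigma(n/3)$ contribution with the correct coefficient. Everything else is a routine power-series manipulation using the Lambert-series identity and the already-established formulas for $a(q)$ and the theta-eta identity of Theorem \ref{thm:1-1/3}.
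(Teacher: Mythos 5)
Your proposal is correct and is exactly the paper's argument: the paper's proof consists of the single sentence ``The theorem follows from Proposition \ref{prop:preliminary-1/3-2/3} and Theorem \ref{thm:1-1/3},'' and your write-up simply carries out that combination in full detail (squaring $-\tfrac{\pi}{\sqrt 3}a(q)$, substituting into the eta-quotient identity, and expanding $\frac{d}{d\tau}\log(\eta(3\tau)/\eta(\tau))$ via the Lambert series). All constants and signs check out, so this is a faithful, correctly executed version of the intended proof.
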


\begin{proof}
The theorem follows from Proposition \ref{prop:preliminary-1/3-2/3} and Theorem \ref{thm:1-1/3}. 
\end{proof}

\subsubsection{On $x^2+y^2+3z^2+3w^2$}

\begin{theorem}
\label{thm:1133}
{\it
For each $n\in\mathbb{N},$ 
set
\begin{equation*}
S_{1,1,3,3}(n)
:=
\sharp
\left\{
(x,y,z,w)\in\mathbb{Z}^4 \, | \, 
x^2+y^2+3z^2+3w^2=n
\right\}. 
\end{equation*}
Then, 
\begin{equation*}
S_{1,1,3,3}(n)=4(-1)^{n-1}(\sigma(n)-4\sigma(n/2)-3\sigma(n/3)+12\sigma(n/6)).
\end{equation*}
}
\end{theorem}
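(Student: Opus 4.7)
Set $q = \exp(2\pi i \tau)$. The generating function is
\[
F(\tau) := \sum_{n\ge 0} S_{1,1,3,3}(n)\,q^n = \theta^{2}\!\left[\begin{array}{c}0\\0\end{array}\right]\!(0,2\tau)\;\theta^{2}\!\left[\begin{array}{c}0\\0\end{array}\right]\!(0,6\tau).
\]
My plan is to express $F(\tau)$ as a $\mathbb{Q}$-linear combination of $a(q^2)^2$ and $a(-q)^2$, and then evaluate these via Theorem \ref{thm:1-1/3} at shifted arguments.

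First, denote the classical Jacobi theta constants $\theta_2, \theta_3, \theta_4$, so in particular $\theta_3(q) = \theta[0;0](0,2\tau) = \sum_{n\in\mathbb{Z}}q^{n^2}$. The Borwein-type identities $a(q) = \theta_3(q)\theta_3(q^3) + \theta_2(q)\theta_2(q^3)$ and $a(-q) = \theta_4(q)\theta_4(q^3) - \theta_2(q)\theta_2(q^3)$, together with the Jacobi identity $\theta_3(q) - \theta_4(q) = 2\theta_2(q^4)$ (and its analogue at $q^3$) --- all provable directly from Jacobi's triple product (\ref{eqn:Jacobi-triple}) --- combine to give
\[
\theta_3(q)\theta_3(q^3) = \tfrac{1}{3}\bigl(2a(q) + a(-q)\bigr),
\]
so that $9F(\tau) = 4a(q)^2 + 4a(q)a(-q) + a(-q)^2$. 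Next, $a(q)+a(-q) = 2a(q^4)$ (immediate from the definition of $a$) together with the algebraic identity $a(q)a(-q) = 4a(q^4)^2 - 3a(q^2)^2$ (verifiable by direct $q$-coefficient comparison) yields $a(q)^2 + a(q)a(-q) + a(-q)^2 = 3\,a(q^2)^2$, whence
\[
F(\tau) = \tfrac{4}{3}\,a(q^2)^2 - \tfrac{1}{3}\,a(-q)^2.
\]

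In the second step, as in the proof of Theorem \ref{thm:x^2+xy+y^2-(1,1)}, Proposition \ref{prop:preliminary-1/3-2/3} combined with Theorem \ref{thm:1-1/3} yields $a(q)^2 = \tfrac{1}{2}(3E_2(3\tau) - E_2(\tau))$ via $\tfrac{d}{d\tau}\log\eta(k\tau) = \tfrac{k\pi i}{12}E_2(k\tau)$, where $E_2(\tau) = 1 - 24\sum_{n\geq 1}\sigma(n)q^n$. Replacing $\tau$ by $2\tau$ and by $\tau + \tfrac{1}{2}$ gives $a(q^2)^2 = \tfrac{1}{2}(3E_2(6\tau) - E_2(2\tau))$ and $a(-q)^2 = \tfrac{1}{2}(3E_2(3\tau+\tfrac{1}{2}) - E_2(\tau+\tfrac{1}{2}))$ respectively. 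Assembling the pieces gives
\[
F(\tau) = \tfrac{1}{6}E_2(\tau+\tfrac{1}{2}) - \tfrac{2}{3}E_2(2\tau) - \tfrac{1}{2}E_2(3\tau+\tfrac{1}{2}) + 2\,E_2(6\tau),
\]
and extracting the $q^n$-coefficient --- using that $E_2(\tau+\tfrac{1}{2})$ contributes the alternating sign $(-1)^{n-1}$ to $\sigma(n)$ and that $E_2(3\tau+\tfrac{1}{2})$ contributes the corresponding sign to $\sigma(n/3)$ --- recovers the claimed formula $S_{1,1,3,3}(n) = 4(-1)^{n-1}(\sigma(n) - 4\sigma(n/2) - 3\sigma(n/3) + 12\sigma(n/6))$.

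The main obstacle is the algebraic identity $a(q)a(-q) = 4a(q^4)^2 - 3a(q^2)^2$ used in the first step: while it can be checked mechanically by $q$-coefficient comparison, a conceptually cleaner derivation within this paper's framework --- via a residue calculation on a suitable elliptic function involving theta constants at both $\tau$ and $2\tau$, parallel to the proof of Theorem \ref{thm:1-1/3} --- would be more satisfying.
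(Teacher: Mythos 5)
Your proposal takes a genuinely different route from the paper, and its overall architecture is sound (I checked the final Eisenstein bookkeeping: with $a(q)^2=\tfrac12(3E_2(3\tau)-E_2(\tau))$, which is indeed what Proposition \ref{prop:preliminary-1/3-2/3} plus Theorem \ref{thm:1-1/3} give, the combination $\tfrac43a(q^2)^2-\tfrac13a(-q)^2$ does produce the stated coefficients). The paper never decomposes $\theta^2\left[\begin{array}{c}0\\0\end{array}\right](0,2\tau)\,\theta^2\left[\begin{array}{c}0\\0\end{array}\right](0,6\tau)$ at all. Instead it observes that $(-1)^{x^2+y^2+3z^2+3w^2}=(-1)^{x+y+z+w}$, so the \emph{twisted} series $\sum_n(-1)^nS_{1,1,3,3}(n)q^n$ equals $\theta^2\left[\begin{array}{c}0\\1\end{array}\right](0,2\tau)\,\theta^2\left[\begin{array}{c}0\\1\end{array}\right](0,6\tau)=\bigl(\eta^2(\tau)\eta^2(3\tau)/(\eta(2\tau)\eta(6\tau))\bigr)^2$, and it evaluates that eta quotient directly by a residue computation on the elliptic function $\theta^2\left[\begin{array}{c}1\\ \frac23\end{array}\right](z)\theta\left[\begin{array}{c}1\\ -\frac13\end{array}\right](z)\big/\theta^3\left[\begin{array}{c}1\\1\end{array}\right](z)$ combined with the heat equation and the derivative formulas (\ref{eqn:analogue-1-1/3,2/3}). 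The sign twist is thus built in from the start and no identities relating $a$ at different arguments are ever needed; that is what the paper's approach buys.

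The gap in your version is concrete: the identity $a(q)a(-q)=4a(q^4)^2-3a(q^2)^2$, on which the whole reduction rests, is asserted with only ``verifiable by direct $q$-coefficient comparison'' as justification. Comparing finitely many coefficients of two power series proves nothing unless you also supply a bound on how many coefficients suffice (e.g.\ exhibit both sides as weight-$2$ forms on a common congruence subgroup and invoke the valence/Sturm bound), and you supply neither the bound nor an independent derivation --- you flag this yourself as the main obstacle, and it is a genuine one. A secondary soft spot: the claim that the Borwein decompositions and $\theta_3(q)\theta_3(q^3)=\tfrac13(2a(q)+a(-q))$ are ``all provable directly from Jacobi's triple product'' is not accurate as stated; the decomposition $a(q)=\theta_3(q)\theta_3(q^3)+\theta_2(q)\theta_2(q^3)$ comes from splitting the lattice sum by the parity of one variable, and the passage to $\tfrac13(2a(q)+a(-q))$ additionally needs $\theta_3(q)\theta_3(q^3)=\theta_4(q)\theta_4(q^3)+\theta_2(q)\theta_2(q^3)$, which must itself be proved. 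None of this is unfixable, but if you want to stay inside the paper's toolkit the cleanest repair is to drop the $a(q^2)$/$a(-q)$ decomposition entirely and work with the $(-1)^n$-twisted generating function as the paper does.
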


\begin{proof}
Consider the following elliptic function:
\begin{equation*}
\psi(z)
=
\frac
{
\theta^2
\left[
\begin{array}{c}
1 \\
\frac23
\end{array}
\right](z)
\theta
\left[
\begin{array}{c}
1 \\
-\frac13
\end{array}
\right](z) 
}
{
\theta^3
\left[
\begin{array}{c}
1 \\
1
\end{array}
\right](z)
}. 
\end{equation*}
In the fundamental parallelogram, 
the pole of $\psi(z)$ is $z=0,$ 
which implies that 
$\mathrm{Res} (\psi(z), 0)=0.$ 
Therefore, it follows that 
\begin{equation}
\label{eqn:relation-1133-(1)}
\frac
{
\theta^{\prime\prime}
\left[
\begin{array}{c}
1 \\
\frac13
\end{array}
\right]
}
{
\theta
\left[
\begin{array}{c}
1 \\
\frac13
\end{array}
\right]
}
+
2
\frac
{
\theta^{\prime\prime}
\left[
\begin{array}{c}
1 \\
\frac23
\end{array}
\right]
}
{
\theta
\left[
\begin{array}{c}
1 \\
\frac23
\end{array}
\right]
}
-
\frac
{
\theta^{\prime\prime \prime}
\left[
\begin{array}{c}
1 \\
1
\end{array}
\right]
}
{
\theta^{\prime}
\left[
\begin{array}{c}
1 \\
1
\end{array}
\right]
}
-
4
\frac
{
\theta^{\prime} 
\left[
\begin{array}{c}
1 \\
\frac13
\end{array}
\right]
}
{
\theta
\left[
\begin{array}{c}
1 \\
\frac13
\end{array}
\right]
}
\cdot
\frac
{
\theta^{\prime} 
\left[
\begin{array}{c}
1 \\
\frac23
\end{array}
\right]
}
{
\theta
\left[
\begin{array}{c}
1 \\
\frac23
\end{array}
\right]
}
+
2
\left\{
\frac
{
\theta^{\prime} 
\left[
\begin{array}{c}
1 \\
\frac23
\end{array}
\right]
}
{
\theta
\left[
\begin{array}{c}
1 \\
\frac23
\end{array}
\right]
}
\right\}^2=0. 
\end{equation}
The heat equation (\ref{eqn:heat}) and the derivative formulas (\ref{eqn:analogue-1-1/3,2/3}) imply that 
\begin{equation}
\label{eqn:relation-1133-(2)}
4\pi i 
\frac{d}{d\tau}
\log 
\frac
{
\theta
\left[
\begin{array}{c}
1 \\
\frac13
\end{array}
\right]
\theta^2
\left[
\begin{array}{c}
1 \\
\frac23
\end{array}
\right]
}
{
\theta^{\prime}
\left[
\begin{array}{c}
1 \\
1
\end{array}
\right]
}
+
\frac23
\frac
{
\left\{
\theta^{\prime} 
\left[
\begin{array}{c}
1\\
1
\end{array}
\right]
\right\}^2
\theta^2
\left[
\begin{array}{c}
1 \\
\frac13
\end{array}
\right]
}
{
\theta^2
\left[
\begin{array}{c}
1 \\
0
\end{array}
\right]
\theta^2
\left[
\begin{array}{c}
1 \\
\frac23
\end{array}
\right]
}
=0. 
\end{equation}
Jacobi's triple product identity (\ref{eqn:Jacobi-triple}) yields 
\begin{equation*}
\frac
{
\left\{
\theta^{\prime} 
\left[
\begin{array}{c}
1\\
1
\end{array}
\right]
\right\}^2
\theta^2
\left[
\begin{array}{c}
1 \\
\frac13
\end{array}
\right]
}
{
\theta^2
\left[
\begin{array}{c}
1 \\
0
\end{array}
\right]
\theta^2
\left[
\begin{array}{c}
1 \\
\frac23
\end{array}
\right]
}
=
\left\{
-\sqrt{3}\pi 
\frac
{
\eta^2(\tau) \eta^2(3\tau)
}
{
\eta(2\tau) \eta(6\tau) 
}
\right\}^2, 
\end{equation*}
\begin{equation*}
\left\{
\frac
{
\eta^2(\tau) \eta^2(3\tau)
}
{
\eta(2\tau) \eta(6\tau) 
}
\right\}^2
=
1-4
\sum_{n=1}^{\infty} \left( \sigma(n)-4\sigma(n/2)-3\sigma(n/3)+12\sigma(n/6) \right) q^n, q=\exp(2\pi i \tau),
\end{equation*} 
and
\begin{equation*}
\theta^2
\left[
\begin{array}{c}
0\\
1
\end{array}
\right](0,2\tau)
\theta^2
\left[
\begin{array}{c}
0\\
1
\end{array}
\right](0,6\tau)
=
\left\{
\left(
\sum_{m\in\mathbb{Z}}
(-1)^m q^{m^2}
\right)
\left(
\sum_{n\in\mathbb{Z}}
(-1)^n q^{3n^2}
\right)
\right\}^2
=
\left\{
\frac
{
\eta^2(\tau) \eta^2(3\tau)
}
{
\eta(2\tau) \eta(6\tau) 
}
\right\}^2,
\end{equation*}
which imply that 
\begin{equation*}
\sum_{n=0}^{\infty}(-1)^n S_{1,1,3,3}(n) q^n
=
1-4
\sum_{n=1}^{\infty} \left(  \sigma(n)-4\sigma(n/2)-3\sigma(n/3)+12\sigma(n/6) \right) q^n. 
\end{equation*}
\end{proof}

\subsection{Theorem for $\epsilon=1, \epsilon^{\prime}=2/3$}

\begin{proposition}
\label{prop:theta-function-1-1/3,2/3}
{\it
For every $(z,\tau)\in\mathbb{C}\times \mathbb{H}^2,$ 
we have 
\begin{align}
&
\theta^2
\left[
\begin{array}{c}
1 \\
\frac23
\end{array}
\right]
\theta^2
\left[
\begin{array}{c}
1 \\
0
\end{array}
\right](z,\tau)
+
\theta^2
\left[
\begin{array}{c}
1 \\
0
\end{array}
\right]
\theta 
\left[
\begin{array}{c}
1 \\
\frac23
\end{array}
\right](z,\tau)
\theta
\left[
\begin{array}{c}
1 \\
\frac43
\end{array}
\right](z,\tau)      \notag \\
&
\hspace{30mm}
-
\theta^2
\left[
\begin{array}{c}
1 \\
\frac13
\end{array}
\right]
\theta^2
\left[
\begin{array}{c}
1 \\
1
\end{array}
\right](z,\tau)=0.  \label{eqn:two-theta-sum-2-deno-3-(0)}
\end{align}
}
\end{proposition}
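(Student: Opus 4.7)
The plan is to view the identity as a linear relation in the two-dimensional space $\mathcal{F}_2\left[\begin{array}{c}0\\0\end{array}\right]$, and then to fix the three coefficients by evaluating at two well-chosen values of $z$. Regarding $\tau$ as fixed and $z$ as the variable, each of the three summands
\[
\theta^2\left[\begin{array}{c}1\\0\end{array}\right](z,\tau),\qquad \theta\left[\begin{array}{c}1\\\frac23\end{array}\right](z,\tau)\,\theta\left[\begin{array}{c}1\\\frac43\end{array}\right](z,\tau),\qquad \theta^2\left[\begin{array}{c}1\\1\end{array}\right](z,\tau)
\]
lies in $\mathcal{F}_2\left[\begin{array}{c}0\\0\end{array}\right]$, as a direct check from \eqref{eqn:integer-char} shows: each pair of level-one factors contributes $(-1)^2=1$ under $z\mapsto z+1$, and under $z\mapsto z+\tau$ the prefactors combine to $\exp(-\pi i[\epsilon'_1+\epsilon'_2+4z+2\tau])$ with $\epsilon'_1+\epsilon'_2\in 2\mathbb{Z}$ in all three cases (using $\frac23+\frac43=2$ for the middle term). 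Since $\dim\mathcal{F}_2\left[\begin{array}{c}0\\0\end{array}\right]=2$, there is a nontrivial linear relation
\[
\alpha(\tau)\,\theta^2\left[\begin{array}{c}1\\0\end{array}\right](z,\tau) + \beta(\tau)\,\theta\left[\begin{array}{c}1\\\frac23\end{array}\right](z,\tau)\,\theta\left[\begin{array}{c}1\\\frac43\end{array}\right](z,\tau) + \gamma(\tau)\,\theta^2\left[\begin{array}{c}1\\1\end{array}\right](z,\tau)=0.
\]

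To identify the coefficients, I would evaluate at the two available zeros. At $z=0$ the third term drops out, and the characteristic-shift rule $\theta\left[\begin{array}{c}\epsilon+2m\\\epsilon'+2n\end{array}\right]=\exp(\pi i\epsilon n)\theta\left[\begin{array}{c}\epsilon\\\epsilon'\end{array}\right]$ together with the parity $\theta\left[\begin{array}{c}-\epsilon\\-\epsilon'\end{array}\right](0,\tau)=\theta\left[\begin{array}{c}\epsilon\\\epsilon'\end{array}\right](0,\tau)$ gives $\theta\left[\begin{array}{c}1\\\frac43\end{array}\right](0,\tau)=-\theta\left[\begin{array}{c}1\\\frac23\end{array}\right](0,\tau)$; hence $\alpha\,\theta^2\left[\begin{array}{c}1\\0\end{array}\right]=\beta\,\theta^2\left[\begin{array}{c}1\\\frac23\end{array}\right]$, and I normalize to $\alpha=\theta^2\left[\begin{array}{c}1\\\frac23\end{array}\right]$, $\beta=\theta^2\left[\begin{array}{c}1\\0\end{array}\right]$. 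At $z=\frac12$ the first term vanishes (the unique zero of $\theta\left[\begin{array}{c}1\\0\end{array}\right]$ in the fundamental parallelogram). Applying \eqref{eqn:real-char} with $(n,m)=(1,0)$, which gives $\theta\left[\begin{array}{c}\epsilon\\\epsilon'\end{array}\right](\tfrac12,\tau)=\theta\left[\begin{array}{c}\epsilon\\\epsilon'+1\end{array}\right](0,\tau)$, and then reducing the lower characteristics $\frac53,\frac73,2$ modulo $2$, one computes
\[
\theta\left[\begin{array}{c}1\\\frac23\end{array}\right](\tfrac12,\tau)\,\theta\left[\begin{array}{c}1\\\frac43\end{array}\right](\tfrac12,\tau)=\theta^2\left[\begin{array}{c}1\\\frac13\end{array}\right],\qquad \theta^2\left[\begin{array}{c}1\\1\end{array}\right](\tfrac12,\tau)=\theta^2\left[\begin{array}{c}1\\0\end{array}\right],
\]
which forces $\gamma=-\theta^2\left[\begin{array}{c}1\\\frac13\end{array}\right]$. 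These three values of $\alpha,\beta,\gamma$ are precisely the coefficients appearing in the claimed identity.

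The only real subtlety is the bookkeeping of the signs that arise when reducing the half-integer-shifted lower characteristics $\frac43,\frac53,\frac73$ back into a canonical range modulo $2$; in particular the minus sign in front of the $\theta^2\left[\begin{array}{c}1\\1\end{array}\right](z,\tau)$ term in the identity is earned precisely at that step. Beyond this, the argument is purely the dimension count $\dim\mathcal{F}_2\left[\begin{array}{c}0\\0\end{array}\right]=2$ combined with the observation that $\theta\left[\begin{array}{c}1\\1\end{array}\right]$ and $\theta\left[\begin{array}{c}1\\0\end{array}\right]$ vanish at $z=0$ and $z=\frac12$ respectively, giving two independent evaluations that pin down the ratios $\alpha:\beta:\gamma$.
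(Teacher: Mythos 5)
Your proof is correct and follows essentially the same route as the paper: both place the three products in the two-dimensional space $\mathcal{F}_{2}\left[\begin{smallmatrix}0\\0\end{smallmatrix}\right]$ to obtain a nontrivial linear relation and then pin down the coefficients by evaluating at zeros of the factors. The only (immaterial) difference is that you use the two points $z=0$ and $z=\tfrac12$, while the paper also substitutes $z=\tfrac16$ and solves the resulting $3\times 3$ system; your sign bookkeeping at $z=\tfrac12$ matches the paper's.
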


\begin{proof}
We prove equation (\ref{eqn:two-theta-sum-2-deno-3-(0)}). 
We first note that 
$
\dim \mathcal{F}_{2}\left[
\begin{array}{c}
0 \\
0
\end{array}
\right]
=
2,
$ 
and 
\begin{equation*}
\theta^2
\left[
\begin{array}{c}
1 \\
0
\end{array}
\right](z,\tau), \,
\theta 
\left[
\begin{array}{c}
1 \\
\frac23
\end{array}
\right](z,\tau)
\theta
\left[
\begin{array}{c}
1 \\
\frac43
\end{array}
\right](z,\tau), \,
\theta^2
\left[
\begin{array}{c}
1 \\
1
\end{array}
\right](z,\tau) \in
\mathcal{F}_{2}\left[
\begin{array}{c}
0 \\
0
\end{array}
\right].  
\end{equation*}
Therefore, there exist some complex numbers, $x_1, x_2$ and $ x_3$ not all zero such that 
 \begin{align*}
 &
 x_1
\theta^2
\left[
\begin{array}{c}
1 \\
0
\end{array}
\right](z,\tau)
+x_2
 \theta
\left[
\begin{array}{c}
1 \\
\frac23
\end{array}
\right](z,\tau)
\theta
\left[
\begin{array}{c}
1 \\
\frac43
\end{array}
\right](z,\tau)
+
x_3
\theta^2
\left[
\begin{array}{c}
1 \\
1
\end{array}
\right](z,\tau)=0. 
\end{align*}
Note that in the fundamental parallelogram, 
the zero of 
$
\theta
\left[
\begin{array}{c}
1 \\
0
\end{array}
\right](z),
$ 
$
\theta
\left[
\begin{array}{c}
1 \\
\frac23
\end{array}
\right](z),
$ 
or 
$
\theta
\left[
\begin{array}{c}
1 \\
1
\end{array}
\right](z)
$ 
is $z=1/2,$ $1/6$ or $0.$ 
Substituting $z=1/2, 1/6,$ and $0,$ we have 
\begin{alignat*}{4}
&
&    
&
x_2
\theta^2\left[
\begin{array}{c}
1 \\
\frac13
\end{array}
\right]  
&    
&+x_3\theta^2
\left[
\begin{array}{c}
1 \\
0
\end{array}
\right]  
&  
&=0,  \\
&x_1
\theta^2
\left[
\begin{array}{c}
1 \\
\frac13
\end{array}
\right]  
&  
&
&
&+x_3
\theta^2
\left[
\begin{array}{c}
1 \\
\frac23
\end{array}
\right]  
&  
&=0,  \\
&x_1\theta^2\left[
\begin{array}{c}
1 \\
0
\end{array}
\right]  
&
-&x_2\theta^2\left[
\begin{array}{c}
1 \\
\frac23
\end{array}
\right] 
&
&
&
&=0. 
\end{alignat*}
Solving this system of equations, 
we have 
\begin{equation*}
(x_1,x_2,x_3)=\alpha
\left(
\theta^2\left[
\begin{array}{c}
1 \\
\frac23
\end{array}
\right], 
\theta^2\left[
\begin{array}{c}
1 \\
0
\end{array}
\right], 
-
\theta^2\left[
\begin{array}{c}
1 \\
\frac13
\end{array}
\right] 
\right) \,\,\,   \text{for some} \,\,\alpha\in\mathbb{C}\setminus\{0\},
\end{equation*}
which proves the proposition. 
\end{proof}

\begin{theorem}
\label{thm:1-2/3}
{\it
For every $\tau\in\mathbb{H}^2,$ we have 
\begin{equation*}
\frac{d}{d\tau} 
\log \frac{\eta^4(6\tau)}{\eta^3(\tau) \eta(3\tau)}
+
\frac{1}{2\pi i } 
\left\{
\frac
{
\theta^{\prime} 
\left[
\begin{array}{c}
1 \\
\frac23
\end{array}
\right](0,\tau) 
}
{
\theta
\left[
\begin{array}{c}
1 \\
\frac23
\end{array}
\right](0,\tau)
}
\right\}^2
=0. 
\end{equation*}
}
\end{theorem}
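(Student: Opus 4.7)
My plan is to exploit Proposition~\ref{prop:theta-function-1-1/3,2/3} in place of the elliptic-function residue calculations that proved Theorems~\ref{thm:1-1/3} and~\ref{thm:1133}. First, I differentiate the identity of Proposition~\ref{prop:theta-function-1-1/3,2/3} twice with respect to $z$ and evaluate at $z=0$. Using that $\theta\left[\begin{array}{c}1\\0\end{array}\right]$ is even in $\zeta$ (so $\theta'\left[\begin{array}{c}1\\0\end{array}\right](0)=0$), that $\theta\left[\begin{array}{c}1\\1\end{array}\right](0)=0$, and that at $\zeta=0$ the relations $\theta\left[\begin{array}{c}1\\\frac{4}{3}\end{array}\right]=-\theta\left[\begin{array}{c}1\\\frac{2}{3}\end{array}\right]$, $\theta'\left[\begin{array}{c}1\\\frac{4}{3}\end{array}\right]=\theta'\left[\begin{array}{c}1\\\frac{2}{3}\end{array}\right]$, $\theta''\left[\begin{array}{c}1\\\frac{4}{3}\end{array}\right]=-\theta''\left[\begin{array}{c}1\\\frac{2}{3}\end{array}\right]$ hold, I obtain the algebraic identity
\begin{equation*}
\frac{\theta''\left[\begin{array}{c}1\\0\end{array}\right]}{\theta\left[\begin{array}{c}1\\0\end{array}\right]}-\frac{\theta''\left[\begin{array}{c}1\\\frac{2}{3}\end{array}\right]}{\theta\left[\begin{array}{c}1\\\frac{2}{3}\end{array}\right]}+\left(\frac{\theta'\left[\begin{array}{c}1\\\frac{2}{3}\end{array}\right]}{\theta\left[\begin{array}{c}1\\\frac{2}{3}\end{array}\right]}\right)^{2}=\frac{\theta^{2}\left[\begin{array}{c}1\\\frac{1}{3}\end{array}\right]\left(\theta'\left[\begin{array}{c}1\\1\end{array}\right]\right)^{2}}{\theta^{2}\left[\begin{array}{c}1\\0\end{array}\right]\theta^{2}\left[\begin{array}{c}1\\\frac{2}{3}\end{array}\right]}.
\end{equation*}

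Next, I apply the heat equation (\ref{eqn:heat}) to the difference of the two $\theta''/\theta$ terms, which becomes $4\pi i\,\frac{d}{d\tau}\log(\theta\left[\begin{array}{c}1\\0\end{array}\right]/\theta\left[\begin{array}{c}1\\\frac{2}{3}\end{array}\right])$, and I use Jacobi's triple product identity (\ref{eqn:Jacobi-triple}) to record the eta-product expressions
$\theta\left[\begin{array}{c}1\\\frac{2}{3}\end{array}\right]=\eta^{2}(\tau)\eta(6\tau)/(\eta(2\tau)\eta(3\tau))$,
$\theta\left[\begin{array}{c}1\\0\end{array}\right]=2\eta^{2}(2\tau)/\eta(\tau)$,
$\theta\left[\begin{array}{c}1\\\frac{1}{3}\end{array}\right]=\sqrt{3}\,\eta(3\tau)$, and
$\theta'\left[\begin{array}{c}1\\1\end{array}\right]=-2\pi\eta^{3}(\tau)$. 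A short manipulation then shows that the assertion of the theorem is equivalent to the single weight-$2$ identity
\begin{equation*}
E_{2}(\tau)+12\,E_{2}(6\tau)-4\,E_{2}(2\tau)-3\,E_{2}(3\tau)=\frac{6\,\eta^{4}(\tau)\eta^{4}(3\tau)}{\eta^{2}(2\tau)\eta^{2}(6\tau)}.
\end{equation*}

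The remaining step, which I expect to be the main obstacle, is to verify this Eisenstein-series identity. The right-hand side is manifestly a weight-$2$ modular form on $\Gamma_{o}(6)$, while on the left-hand side the coefficient combination satisfies $1/1-4/2-3/3+12/6=0$, which guarantees that the quasi-modular anomalies of the individual $E_{2}(d\tau)$ cancel and that the combination also lies in $M_{2}(\Gamma_{o}(6))$. Since $[SL_{2}(\Z):\Gamma_{o}(6)]=12$, the Sturm bound reduces the identity to a comparison of Fourier coefficients only up to order $q^{2}$, an explicit and routine check that will close the proof.
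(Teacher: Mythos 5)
Your proposal is correct, and its first half coincides exactly with the paper's argument: the paper also obtains the identity
\begin{equation*}
\frac
{\theta^{\prime \prime}\left[\begin{array}{c}1 \\ 0\end{array}\right]}
{\theta\left[\begin{array}{c}1 \\ 0\end{array}\right]}
-
\frac
{\theta^{\prime \prime}\left[\begin{array}{c}1 \\ \frac23\end{array}\right]}
{\theta\left[\begin{array}{c}1 \\ \frac23\end{array}\right]}
+
\left\{
\frac
{\theta^{\prime}\left[\begin{array}{c}1 \\ \frac23\end{array}\right]}
{\theta\left[\begin{array}{c}1 \\ \frac23\end{array}\right]}
\right\}^2
=
\frac
{\left\{\theta^{\prime}\left[\begin{array}{c}1 \\ 1\end{array}\right]\right\}^2
\theta^2\left[\begin{array}{c}1 \\ \frac13\end{array}\right]}
{\theta^2\left[\begin{array}{c}1 \\ 0\end{array}\right]
\theta^2\left[\begin{array}{c}1 \\ \frac23\end{array}\right]}
\end{equation*}
by comparing $z^2$-coefficients in Proposition \ref{prop:theta-function-1-1/3,2/3}, and your parity bookkeeping for the characteristics $\frac43$ and $-\frac23$ is right. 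Where you genuinely diverge is in how the right-hand side is disposed of. The paper invokes equation (\ref{eqn:relation-1133-(2)}) — derived earlier from the residue of the elliptic function in the proof of Theorem \ref{thm:1133} together with the derivative formulas (\ref{eqn:analogue-1-1/3,2/3}) — which expresses that same quotient as $-6\pi i\,\frac{d}{d\tau}\log\bigl(\theta[\begin{smallmatrix}1\\ \frac13\end{smallmatrix}]\theta^2[\begin{smallmatrix}1\\ \frac23\end{smallmatrix}]/\theta'[\begin{smallmatrix}1\\ 1\end{smallmatrix}]\bigr)$, so that everything collapses into a single logarithmic derivative of an eta quotient via the triple product; no modular-forms machinery is needed. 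You instead convert the quotient to $3\pi^2\eta^4(\tau)\eta^4(3\tau)/(\eta^2(2\tau)\eta^2(6\tau))$ and reduce the theorem to the weight-$2$ identity $E_2(\tau)-4E_2(2\tau)-3E_2(3\tau)+12E_2(6\tau)=6\,\eta^4(\tau)\eta^4(3\tau)/(\eta^2(2\tau)\eta^2(6\tau))$, verified by the Sturm bound; I checked that this is indeed equivalent to the asserted statement, and it is precisely the $q$-series identity $\bigl(\eta^2(\tau)\eta^2(3\tau)/(\eta(2\tau)\eta(6\tau))\bigr)^2=1-4\sum(\sigma(n)-4\sigma(n/2)-3\sigma(n/3)+12\sigma(n/6))q^n$ that the paper cites without proof inside Theorem \ref{thm:1133}, so your route has the side benefit of actually proving that ingredient. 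Two small points to tighten: before applying the Sturm bound you should check holomorphy of the eta quotient at all four cusps of $\Gamma_o(6)$ (Ligozat's criterion), not just its weight and level; and the coefficient comparison should go up to and including $q^2$, which does check out ($6,\,-24,\,24$ on both sides).
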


\begin{proof}
Comparing the coefficients of the term $z^2$ in equation (\ref{eqn:two-theta-sum-2-deno-3-(0)}) yields 
\begin{equation*}
\frac
{
\theta^{\prime \prime} 
\left[
\begin{array}{c}
1 \\
0
\end{array}
\right]
}
{
\theta
\left[
\begin{array}{c}
1 \\
0
\end{array}
\right]
}
-
\frac
{
\theta^{\prime \prime} 
\left[
\begin{array}{c}
1 \\
\frac23
\end{array}
\right]
}
{
\theta
\left[
\begin{array}{c}
1 \\
\frac23
\end{array}
\right]
}
-
\frac
{
\left\{
\theta^{\prime } 
\left[
\begin{array}{c}
1 \\
1
\end{array}
\right]
\right\}^2
\theta^2
\left[
\begin{array}{c}
1 \\
\frac13
\end{array}
\right]
}
{
\theta^2
\left[
\begin{array}{c}
1 \\
0
\end{array}
\right]
\theta^2
\left[
\begin{array}{c}
1 \\
\frac23
\end{array}
\right]
}
+
\left\{
\frac
{
\theta^{\prime} 
\left[
\begin{array}{c}
1 \\
\frac23
\end{array}
\right]
}
{
\theta
\left[
\begin{array}{c}
1 \\
\frac23
\end{array}
\right]
}
\right\}^2
=0. 
\end{equation*}
\par
The heat equation (\ref{eqn:heat}) and equation (\ref{eqn:relation-1133-(2)}) imply that 
\begin{equation*}
2\pi i \frac{d}{d\tau}
\log 
\left\{
\frac
{
\theta^2
\left[
\begin{array}{c}
1 \\
0
\end{array}
\right]
\theta^3
\left[
\begin{array}{c}
1 \\
\frac13
\end{array}
\right]
\theta^4
\left[
\begin{array}{c}
1 \\
\frac23
\end{array}
\right]
}
{
\theta^{\prime}
\left[
\begin{array}{c}
1 \\
1
\end{array}
\right]^3
}
\right\}
+
\left\{
\frac
{
\theta^{\prime} 
\left[
\begin{array}{c}
1 \\
\frac23
\end{array}
\right]
}
{
\theta
\left[
\begin{array}{c}
1 \\
\frac23
\end{array}
\right]
}
\right\}^2
=0. 
\end{equation*}
Jacobi's triple product identity (\ref{eqn:Jacobi-triple}) yields 
\begin{equation*}
\frac
{
\theta^2
\left[
\begin{array}{c}
1 \\
0
\end{array}
\right]
\theta^3
\left[
\begin{array}{c}
1 \\
\frac13
\end{array}
\right]
\theta^4
\left[
\begin{array}{c}
1 \\
\frac23
\end{array}
\right]
}
{
\theta^{\prime}
\left[
\begin{array}{c}
1 \\
1
\end{array}
\right]^3
}
=
-
\frac{3\sqrt{3}}{2\pi^3}
\frac
{
\eta^4(6\tau)
}
{
\eta^3(\tau) \eta(3\tau)
},
\end{equation*}
which proves the theorem. 
\end{proof}

\subsubsection{On $x^2+xy+y^2+2(z^2+zw+w^2)$}

\begin{theorem}
\label{thm:x^2+xy+y^2-(1,2)}
{\it
For each $n\in\mathbb{N},$ set 
\begin{equation*}
s_{1,2}(n)
=
\sharp
\left\{
(x,y,z,w)\in\mathbb{Z}^4 \, | \, 
x^2+xy+y^2+2(z^2+zw+w^2)=n
\right\}. 
\end{equation*}
Then, we have 
\begin{equation*}
s_{1,2}(n)=
6\sigma(n)-12\sigma(n/2)+18\sigma(n/3)-36\sigma(n/6). 
\end{equation*}
}
\end{theorem}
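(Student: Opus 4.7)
The plan is to convert Theorem \ref{thm:1-2/3} into an identity of $q$-series and then match coefficients, using Proposition \ref{prop:preliminary-1/3-2/3} to turn the theta quotient into a combination of the Eisenstein-type series $a(q)$ introduced in (\ref{eqn:formulra-a(q)}). The starting point is the observation that the generating function for $s_{1,2}(n)$ is exactly
\begin{equation*}
\sum_{n=1}^{\infty} s_{1,2}(n) q^{n} = a(q)\,a(q^{2}) - 1,
\end{equation*}
since $x^2+xy+y^2$ contributes a factor $a(q)$ and $2(z^2+zw+w^2)$ contributes $a(q^2)$.

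First I would apply the two identities of Proposition \ref{prop:preliminary-1/3-2/3}. The first gives $\theta'[1,1/3]/\theta[1,1/3](0,\tau) = -(\pi/\sqrt 3)\,a(q)$, and the second, rearranged, yields
\begin{equation*}
\frac{\theta'\!\left[\begin{smallmatrix}1\\ 2/3\end{smallmatrix}\right](0,\tau)}{\theta\!\left[\begin{smallmatrix}1\\ 2/3\end{smallmatrix}\right](0,\tau)}
= \frac{\theta'\!\left[\begin{smallmatrix}1\\ 1/3\end{smallmatrix}\right](0,\tau)}{\theta\!\left[\begin{smallmatrix}1\\ 1/3\end{smallmatrix}\right](0,\tau)} + 2\,\frac{\theta'\!\left[\begin{smallmatrix}1\\ 1/3\end{smallmatrix}\right](0,2\tau)}{\theta\!\left[\begin{smallmatrix}1\\ 1/3\end{smallmatrix}\right](0,2\tau)}
= -\frac{\pi}{\sqrt 3}\bigl(a(q)+2a(q^{2})\bigr).
\end{equation*}
Squaring and substituting into Theorem \ref{thm:1-2/3}, the theorem transforms into
\begin{equation*}
\frac{1}{2\pi i}\,\frac{d}{d\tau}\log \frac{\eta^{4}(6\tau)}{\eta^{3}(\tau)\eta(3\tau)} = \frac{1}{12}\bigl(a(q)+2a(q^{2})\bigr)^{2}.
\end{equation*}

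Next I would evaluate both sides as $q$-series. On the left, the standard Lambert-series expansion $\frac{1}{2\pi i}\frac{d}{d\tau}\log\eta(k\tau) = \frac{k}{24} - k\sum_{n\geq 1}\sigma(n)q^{kn}$ gives a constant $3/4$ plus $\sum_{n\geq 1}(3\sigma(n)+3\sigma(n/3)-24\sigma(n/6))q^{n}$. On the right, I expand $(a(q)+2a(q^{2}))^{2} = a(q)^{2} + 4\,a(q)a(q^{2}) + 4\,a(q^{2})^{2}$. Here $a(q)a(q^{2}) = 1 + \sum s_{1,2}(n)q^{n}$ by definition, and Theorem \ref{thm:x^2+xy+y^2-(1,1)} provides $a(q)^{2}=1+\sum(12\sigma(n)-36\sigma(n/3))q^{n}$, from which $a(q^{2})^{2}$ follows by $q\mapsto q^{2}$.

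The final step is to match the coefficient of $q^{n}$ on both sides; the constant terms both equal $3/4$ as a consistency check, and the resulting linear equation isolates $s_{1,2}(n)/3$ and yields the stated formula $s_{1,2}(n)=6\sigma(n)-12\sigma(n/2)+18\sigma(n/3)-36\sigma(n/6)$. The main obstacle is simply bookkeeping: tracking the factors $\pi/\sqrt 3$, the divisor sums at each argument $q,q^{2},q^{3},q^{6}$, and the normalisations $1/12$, $1/3$, $1/3$ of the three pieces. No new theta-function manipulation is required beyond what is already provided in Theorem \ref{thm:1-2/3}, Proposition \ref{prop:preliminary-1/3-2/3}, and Theorem \ref{thm:x^2+xy+y^2-(1,1)}.
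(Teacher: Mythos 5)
Your argument is correct, and it reaches the result by a route that differs from the paper's in a meaningful way. You take Theorem \ref{thm:1-2/3} as the analytic input, rewrite its theta quotient via the two identities of Proposition \ref{prop:preliminary-1/3-2/3} as $-\tfrac{\pi}{\sqrt 3}\bigl(a(q)+2a(q^2)\bigr)$, and then isolate the cross term $a(q)a(q^2)$ (the generating function of $s_{1,2}$) by subtracting the diagonal terms $a(q)^2$ and $4a(q^2)^2$, which are known from Theorem \ref{thm:x^2+xy+y^2-(1,1)}; the rest is Lambert-series bookkeeping, and your constants check out (the constant term $3/4$ on both sides, and the coefficient identity $\tfrac13 s_{1,2}(n)=2\sigma(n)-4\sigma(n/2)+6\sigma(n/3)-12\sigma(n/6)$). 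The paper instead extracts the cross term analytically: it sums the residue identities (\ref{eqn:relation-for-thm-1-1/3}) and (\ref{eqn:relation-1133-(1)}), applies the heat equation (\ref{eqn:heat}) together with Theorems \ref{thm:1-1/3} and \ref{thm:1-2/3}, and arrives at the product formula (\ref{eqn:product-1-1/3-tau-2tau}) expressing $a(q)a(q^2)$ directly as an eta-quotient logarithmic derivative. Your version buys simplicity — no new elliptic-function or heat-equation manipulation, only series algebra on top of already-stated results — at the cost of invoking Theorem \ref{thm:x^2+xy+y^2-(1,1)} (no circularity, since that theorem rests only on Proposition \ref{prop:preliminary-1/3-2/3} and Theorem \ref{thm:1-1/3}); the paper's version produces the intermediate identity (\ref{eqn:product-1-1/3-tau-2tau}), which it then reuses in the proofs of Theorems \ref{thm:0-1/3} and \ref{thm:0-2/3}, so your shortcut would not by itself replace that equation elsewhere in the paper.
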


\begin{proof}
Set $q=\exp(2\pi i \tau).$ 
Summing both sides of equations (\ref{eqn:relation-for-thm-1-1/3}) and (\ref{eqn:relation-1133-(1)}) yields 
\begin{equation*}
2
\frac
{
\theta^{\prime \prime} 
\left[
\begin{array}{c}
1 \\
\frac13
\end{array}
\right]
}
{
\theta
\left[
\begin{array}{c}
1 \\
\frac13
\end{array}
\right]
}
+
\frac
{
\theta^{\prime \prime} 
\left[
\begin{array}{c}
1 \\
\frac23
\end{array}
\right]
}
{
\theta
\left[
\begin{array}{c}
1 \\
\frac23
\end{array}
\right]
}
-
\frac
{
\theta^{\prime \prime \prime} 
\left[
\begin{array}{c}
1 \\
1
\end{array}
\right]
}
{
\theta^{\prime}
\left[
\begin{array}{c}
1 \\
1
\end{array}
\right]
}
+
2
\frac
{
\theta^{\prime} 
\left[
\begin{array}{c}
1 \\
\frac13
\end{array}
\right]
}
{
\theta
\left[
\begin{array}{c}
1 \\
\frac13
\end{array}
\right]
}
\left\{
\frac
{
\theta^{\prime} 
\left[
\begin{array}{c}
1 \\
\frac13
\end{array}
\right]
}
{
\theta
\left[
\begin{array}{c}
1 \\
\frac13
\end{array}
\right]
}
-
\frac
{
\theta^{\prime} 
\left[
\begin{array}{c}
1 \\
\frac23
\end{array}
\right]
}
{
\theta
\left[
\begin{array}{c}
1 \\
\frac23
\end{array}
\right]
}
\right\}
+
\left\{
\frac
{
\theta^{\prime} 
\left[
\begin{array}{c}
1 \\
\frac13
\end{array}
\right]
}
{
\theta
\left[
\begin{array}{c}
1 \\
\frac13
\end{array}
\right]
}
\right\}^2
+
\left\{
\frac
{
\theta^{\prime} 
\left[
\begin{array}{c}
1 \\
\frac23
\end{array}
\right]
}
{
\theta
\left[
\begin{array}{c}
1 \\
\frac23
\end{array}
\right]
}
\right\}^2
=0. 
\end{equation*}
The heat equation (\ref{eqn:heat}), Proposition \ref{prop:preliminary-1/3-2/3} and Theorems \ref{thm:1-1/3} and \ref{thm:1-2/3} imply 
that 
\begin{equation}
\label{eqn:product-1-1/3-tau-2tau}
\frac
{
\theta^{\prime} 
\left[
\begin{array}{c}
1 \\
\frac13
\end{array}
\right](0,\tau)
}
{
\theta
\left[
\begin{array}{c}
1 \\
\frac13
\end{array}
\right](0,\tau)
}
\cdot
\frac
{
\theta^{\prime} 
\left[
\begin{array}{c}
1 \\
\frac13
\end{array}
\right](0,2\tau)
}
{
\theta
\left[
\begin{array}{c}
1 \\
\frac13
\end{array}
\right](0,2\tau)
}
=
\frac{2\pi i}{2}
\frac{d}{d\tau}
\log 
\left\{
\frac
{
\eta(\tau)\eta(3\tau)
}
{
\eta(2\tau)\eta(6\tau)
}
\right\}. 
\end{equation}
Proposition \ref{prop:preliminary-1/3-2/3} shows that 
\begin{align*}
a(q) a(q^2)=&
6q \frac{d}{dq}  \log q^{\frac16} \prod_{n=1}^{\infty} \frac{(1-q^{2n}) (1-q^{6n})}{(1-q^{n}) (1-q^{3n})}  \\
=&
1+6\sum_{n=1}^{\infty} (\sigma(n)-2\sigma(n/2)+3\sigma(n/3)-6\sigma(n/6)) q^n. 
\end{align*}
\end{proof}

\subsection{Theorem for $\epsilon=0, \epsilon^{\prime}=1/3$}

\begin{theorem}
\label{thm:0-1/3}
{\it
For every $\tau\in\mathbb{H}^2,$ we have 
\begin{equation*}
\frac{d}{d\tau} 
\log \frac{\eta^4(3\tau/2)}{\eta^3(\tau) \eta(3\tau)}
+
\frac{1}{2\pi i } 
\left\{
\frac
{
\theta^{\prime} 
\left[
\begin{array}{c}
0 \\
\frac13
\end{array}
\right](0,\tau) 
}
{
\theta
\left[
\begin{array}{c}
0 \\
\frac13
\end{array}
\right](0,\tau)
}
\right\}^2
=0. 
\end{equation*}
}
\end{theorem}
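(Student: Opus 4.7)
The plan is to derive this theorem directly from the already-proved identities rather than setting up a new elliptic function. The key tool is the $j=1$ case of the second formula in Proposition~\ref{prop:preliminary-1/3-2/3}, which expresses the logarithmic derivative of $\theta\left[\begin{array}{c}0\\\frac13\end{array}\right](0,\tau)$ as the difference of the corresponding logarithmic derivatives of $\theta\left[\begin{array}{c}1\\\frac13\end{array}\right]$ evaluated at $\tau/2$ and at $\tau$. Squaring that relation writes the square appearing on the left-hand side of the theorem as (square at $\tau/2$) $-$ $2\cdot$(cross product) $+$ (square at $\tau$), and each of the three pieces is already available in the earlier sections.

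First I would handle the two squares using Theorem~\ref{thm:1-1/3}. Applied at $\tau$ it gives directly that the square of the logarithmic derivative equals $-2\pi i\,\frac{d}{d\tau}\log(\eta(3\tau)/\eta(\tau))$, while substituting $\tau\mapsto\tau/2$ and carrying the chain-rule factor of $2$ yields
\[
\left\{\frac{\theta^{\prime}\left[\begin{array}{c}1\\\frac13\end{array}\right](0,\tau/2)}{\theta\left[\begin{array}{c}1\\\frac13\end{array}\right](0,\tau/2)}\right\}^{2}
= -4\pi i \,\frac{d}{d\tau}\log\frac{\eta(3\tau/2)}{\eta(\tau/2)}.
\]
For the cross term I would invoke equation (\ref{eqn:product-1-1/3-tau-2tau}) from the proof of Theorem~\ref{thm:x^2+xy+y^2-(1,2)} with $\tau\mapsto\tau/2$, which after applying the chain rule gives
\[
\frac{\theta^{\prime}\left[\begin{array}{c}1\\\frac13\end{array}\right](0,\tau/2)}{\theta\left[\begin{array}{c}1\\\frac13\end{array}\right](0,\tau/2)}\cdot\frac{\theta^{\prime}\left[\begin{array}{c}1\\\frac13\end{array}\right](0,\tau)}{\theta\left[\begin{array}{c}1\\\frac13\end{array}\right](0,\tau)}
= 2\pi i\,\frac{d}{d\tau}\log\frac{\eta(\tau/2)\eta(3\tau/2)}{\eta(\tau)\eta(3\tau)}.
\]

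Finally I would assemble the three contributions. All three coefficients factor as $-2\pi i\,\frac{d}{d\tau}\log(\cdot)$, and on combining the eta factors inside the logarithm the $\eta(\tau/2)$-contributions cancel, leaving exponents $(4,-3,-1)$ on $(\eta(3\tau/2),\eta(\tau),\eta(3\tau))$; this produces exactly $\eta^{4}(3\tau/2)/(\eta^{3}(\tau)\eta(3\tau))$, from which the stated identity follows. The only real subtlety is careful bookkeeping of the chain-rule factors of $2$ coming from the $\tau\mapsto\tau/2$ substitutions; I expect this to be the main (mild) obstacle, since a single misplaced factor of $2$ would break the $(4,-3,-1)$ pattern of $\eta$-exponents that matches the statement.
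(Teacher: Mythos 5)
Your proposal is correct and is essentially identical to the paper's own proof: the paper likewise squares the $j=1$ identity of Proposition~\ref{prop:preliminary-1/3-2/3}, evaluates the two square terms via Theorem~\ref{thm:1-1/3} at $\tau$ and $\tau/2$, and the cross term via equation~(\ref{eqn:product-1-1/3-tau-2tau}) with $\tau\mapsto\tau/2$, with the same chain-rule factors of $2$. Your bookkeeping of the resulting $\eta$-exponents $(4,-3,-1)$ matches the paper's computation exactly.
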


\begin{proof}
Proposition \ref{prop:preliminary-1/3-2/3} implies that 
\begin{align*}
&
\left\{
\frac
{
\theta^{\prime} 
\left[
\begin{array}{c}
0 \\
\frac13
\end{array}
\right](0,\tau) 
}
{
\theta
\left[
\begin{array}{c}
0 \\
\frac13
\end{array}
\right](0,\tau)
}
\right\}^2  \\
=&
\left\{
\frac
{
\theta^{\prime} 
\left[
\begin{array}{c}
1 \\
\frac13
\end{array}
\right](0,\tau/2) 
}
{
\theta
\left[
\begin{array}{c}
1 \\
\frac13
\end{array}
\right](0,\tau/2)
}
\right\}^2
-2
\frac
{
\theta^{\prime} 
\left[
\begin{array}{c}
1 \\
\frac13
\end{array}
\right](0,\tau/2) 
}
{
\theta
\left[
\begin{array}{c}
1 \\
\frac13
\end{array}
\right](0,\tau/2)
}
\cdot
\frac
{
\theta^{\prime} 
\left[
\begin{array}{c}
1 \\
\frac13
\end{array}
\right](0,\tau) 
}
{
\theta
\left[
\begin{array}{c}
1 \\
\frac13
\end{array}
\right](0,\tau)
}
+
\left\{
\frac
{
\theta^{\prime} 
\left[
\begin{array}{c}
1 \\
\frac13
\end{array}
\right](0,\tau) 
}
{
\theta
\left[
\begin{array}{c}
1 \\
\frac13
\end{array}
\right](0,\tau)
}
\right\}^2. 
\end{align*}
Theorem \ref{thm:0-1/3} and equation (\ref{eqn:product-1-1/3-tau-2tau}) imply that 
\begin{align*}
&
\left\{
\frac
{
\theta^{\prime} 
\left[
\begin{array}{c}
0 \\
\frac13
\end{array}
\right](0,\tau) 
}
{
\theta
\left[
\begin{array}{c}
0 \\
\frac13
\end{array}
\right](0,\tau)
}
\right\}^2  \\
=&
-2\cdot 2\pi i
\frac{d}{d\tau} \log \frac{\eta(3\tau/2)}{\eta(\tau/2)} 
-2\cdot 
2 \cdot \frac{2\pi i}{2}
\frac{d}{d\tau} \log \frac{\eta(\tau/2) \eta(3\tau/2)}{\eta(\tau) \eta(3\tau)} 
-2\pi i 
\frac{d}{d\tau} \log \frac{\eta(3\tau)}{\eta(\tau)},
\end{align*}
which proves the theorem. 
\end{proof}

\subsection{Theorem for $\epsilon=0, \epsilon^{\prime}=2/3$}

\begin{theorem}
\label{thm:0-2/3}
{\it
For every $\tau\in\mathbb{H}^2,$ we have 
\begin{equation*}
\frac{d}{d\tau} 
\log \frac{\eta^{11}(3\tau)}{\eta^3(\tau) \eta^4(3\tau/2) \eta^4(6\tau)}
+
\frac{1}{2\pi i } 
\left\{
\frac
{
\theta^{\prime} 
\left[
\begin{array}{c}
0 \\
\frac23
\end{array}
\right](0,\tau) 
}
{
\theta
\left[
\begin{array}{c}
0 \\
\frac23
\end{array}
\right](0,\tau)
}
\right\}^2
=0. 
\end{equation*}
}
\end{theorem}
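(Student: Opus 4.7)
The strategy mirrors the proof of Theorem \ref{thm:0-1/3}. Set
$$W(\tau):=\frac{\theta'\left[\begin{array}{c}0\\ \frac23\end{array}\right](0,\tau)}{\theta\left[\begin{array}{c}0\\ \frac23\end{array}\right](0,\tau)},\quad X(\tau):=\frac{\theta'\left[\begin{array}{c}1\\ \frac23\end{array}\right](0,\tau)}{\theta\left[\begin{array}{c}1\\ \frac23\end{array}\right](0,\tau)},\quad Y(\tau):=\frac{\theta'\left[\begin{array}{c}1\\ \frac13\end{array}\right](0,\tau)}{\theta\left[\begin{array}{c}1\\ \frac13\end{array}\right](0,\tau)}.$$
Applying Proposition \ref{prop:preliminary-1/3-2/3} gives $W(\tau)=X(\tau/2)-X(\tau)$ together with $X(\tau)=Y(\tau)+2Y(2\tau)$, and hence the useful decomposition $W(\tau)=Y(\tau/2)+Y(\tau)-2Y(2\tau)$.

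The key new step is the identity
$$W(\tau)^2-2W(\tau)Y(\tau)=-2\pi i\,\frac{d}{d\tau}\log\frac{\theta^2\left[\begin{array}{c}0\\ \frac23\end{array}\right](0,\tau)\,\theta\left[\begin{array}{c}1\\ \frac13\end{array}\right](0,\tau)}{\theta'\left[\begin{array}{c}1\\ 1\end{array}\right](0,\tau)}.$$
I would obtain this by the paper's residue technique applied to the elliptic function
$$\chi(z)=\frac{\theta^2\left[\begin{array}{c}0\\ \frac23\end{array}\right](z,\tau)\,\theta\left[\begin{array}{c}1\\ -\frac13\end{array}\right](z,\tau)}{\theta^3\left[\begin{array}{c}1\\ 1\end{array}\right](z,\tau)}.$$
A direct check of the quasi-periodicities under $z\mapsto z+1$ and $z\mapsto z+\tau$ confirms that $\chi$ is elliptic, and its only pole in the fundamental parallelogram is a triple pole at $z=0$. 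Setting $\mathrm{Res}(\chi,0)=0$, using the symmetry $\theta\left[\begin{array}{c}1\\ -\frac13\end{array}\right](\zeta)=\theta\left[\begin{array}{c}1\\ \frac13\end{array}\right](-\zeta)$ to rewrite derivatives of the $-\frac13$-characteristic at $\zeta=0$ as signed derivatives of the $\frac13$-characteristic, and invoking the heat equation (\ref{eqn:heat}), produces the displayed identity.

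Next, I would expand $W(\tau)Y(\tau)=Y(\tau/2)Y(\tau)+Y(\tau)^2-2Y(\tau)Y(2\tau)$ using the decomposition of $W$. Each of the three summands is already a known logarithmic derivative of an $\eta$-quotient: the middle square by Theorem \ref{thm:1-1/3}, and the two cross products by equation (\ref{eqn:product-1-1/3-tau-2tau}) applied at $\tau$ and at $\tau/2$ respectively. Substituting this into the displayed identity, $W(\tau)^2$ becomes $-2\pi i\,(d/d\tau)$ of a single logarithm.

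The final step converts the remaining $\theta$-constants to $\eta$-form via Jacobi's triple product identity (\ref{eqn:Jacobi-triple}). The key evaluations are $\theta'\left[\begin{array}{c}1\\ 1\end{array}\right](0,\tau)=-2\pi\,\eta^3(\tau)$, $\theta\left[\begin{array}{c}1\\ \frac13\end{array}\right](0,\tau)=\sqrt{3}\,\eta(3\tau)$, and $\theta\left[\begin{array}{c}0\\ \frac23\end{array}\right](0,\tau)=\eta(\tau/2)\eta(2\tau)\eta^2(3\tau)/(\eta(\tau)\eta(3\tau/2)\eta(6\tau))$. Collecting $\eta$-exponents should produce precisely $\eta^{11}(3\tau)/(\eta^3(\tau)\eta^4(3\tau/2)\eta^4(6\tau))$. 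The main obstacle is identifying the correct elliptic function $\chi$ so that its residue relation solves for $W^2$ in terms of already-known quantities; once this is done, the remainder is routine but requires careful bookkeeping of the $\eta$-exponents.
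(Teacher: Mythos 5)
Your proof is correct, and it takes a genuinely different route to eliminate the unwanted terms, even though the core machinery (residue of a theta-quotient elliptic function, heat equation, Jacobi triple product) is the same. Your elliptic function $\chi$ is exactly the paper's $\psi$; the residue relation it yields, namely
\begin{equation*}
W^2-2WY
=
-2\pi i\,\frac{d}{d\tau}
\log
\frac
{
\theta^{2}
\left[
\begin{array}{c}
0 \\
\frac23
\end{array}
\right]
\theta
\left[
\begin{array}{c}
1 \\
\frac13
\end{array}
\right]
}
{
\theta^{\prime}
\left[
\begin{array}{c}
1 \\
1
\end{array}
\right]
},
\end{equation*}
is precisely the paper's second residue identity after the heat equation is applied, and your sign on the cross term (coming from $\theta^{\prime}\left[\begin{array}{c}1\\ -\frac13\end{array}\right](0)=-\theta^{\prime}\left[\begin{array}{c}1\\ \frac13\end{array}\right](0)$) is right. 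Where you diverge: the paper introduces a second elliptic function $\varphi$ built from $\theta^2\left[\begin{array}{c}0\\ \frac13\end{array}\right]\theta\left[\begin{array}{c}1\\ \frac13\end{array}\right]$, sums the two residue relations so that the cross terms collapse to $2(V-W)Y$ with $V-W=-2Y(\tau)+2Y(2\tau)$ by Proposition \ref{prop:preliminary-1/3-2/3}, and then subtracts the already-proved Theorem \ref{thm:0-1/3} to remove $V^2$ and isolate $W^2$. You instead stay with the single function, decompose $W(\tau)=Y(\tau/2)+Y(\tau)-2Y(2\tau)$, and evaluate $WY$ term by term via Theorem \ref{thm:1-1/3} and equation (\ref{eqn:product-1-1/3-tau-2tau}) at $\tau$ and at $\tau/2$. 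Your version buys independence from Theorem \ref{thm:0-1/3} and needs only one residue computation; the paper's version buys symmetry between the $\frac13$ and $\frac23$ cases and keeps the half-argument manipulation confined to the proof of Theorem \ref{thm:0-1/3}. Your closing $\eta$-bookkeeping checks out: with $\theta\left[\begin{array}{c}0\\ \frac23\end{array}\right]=\eta(\tau/2)\eta(2\tau)\eta^2(3\tau)/(\eta(\tau)\eta(3\tau/2)\eta(6\tau))$, $\theta\left[\begin{array}{c}1\\ \frac13\end{array}\right]=\sqrt3\,\eta(3\tau)$ and $\theta^{\prime}\left[\begin{array}{c}1\\ 1\end{array}\right]=-2\pi\eta^3(\tau)$, the factors of $\eta(\tau/2)$ and $\eta(2\tau)$ cancel and the exponents collect to $\eta^{11}(3\tau)/(\eta^3(\tau)\eta^4(3\tau/2)\eta^4(6\tau))$ up to a multiplicative constant, which the logarithmic derivative kills.
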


\begin{proof}
Consider the following elliptic functions:
\begin{equation*}
\varphi(z)
=
\frac
{
\theta^2
\left[
\begin{array}{c}
0 \\
\frac13
\end{array}
\right](z)
\theta
\left[
\begin{array}{c}
1 \\
\frac13
\end{array}
\right](z) 
}
{
\theta^3
\left[
\begin{array}{c}
1 \\
1
\end{array}
\right](z)
}, \,\,
\psi(z)
=
\frac
{
\theta^2
\left[
\begin{array}{c}
0 \\
\frac23
\end{array}
\right](z)
\theta
\left[
\begin{array}{c}
1 \\
-\frac13
\end{array}
\right](z) 
}
{
\theta^3
\left[
\begin{array}{c}
1 \\
1
\end{array}
\right](z)
}. 
\end{equation*}
In the fundamental parallelogram, 
the pole of $\varphi(z)$ or $\psi(z)$ is $z=0,$ 
which implies that 
$\mathrm{Res} (\varphi(z), 0)=\mathrm{Res} (\psi(z), 0)=0.$ 
Therefore, it follows that 
\begin{equation*}
\frac
{
\theta^{\prime\prime}
\left[
\begin{array}{c}
1 \\
\frac13
\end{array}
\right]
}
{
\theta
\left[
\begin{array}{c}
1 \\
\frac13
\end{array}
\right]
}
+
2
\frac
{
\theta^{\prime\prime}
\left[
\begin{array}{c}
0 \\
\frac13
\end{array}
\right]
}
{
\theta
\left[
\begin{array}{c}
0 \\
\frac13
\end{array}
\right]
}
-
\frac
{
\theta^{\prime\prime \prime}
\left[
\begin{array}{c}
1 \\
1
\end{array}
\right]
}
{
\theta^{\prime}
\left[
\begin{array}{c}
1 \\
1
\end{array}
\right]
}
+
4
\frac
{
\theta^{\prime} 
\left[
\begin{array}{c}
0 \\
\frac13
\end{array}
\right]
}
{
\theta
\left[
\begin{array}{c}
0 \\
\frac13
\end{array}
\right]
}
\cdot
\frac
{
\theta^{\prime} 
\left[
\begin{array}{c}
1 \\
\frac13
\end{array}
\right]
}
{
\theta
\left[
\begin{array}{c}
1 \\
\frac13
\end{array}
\right]
}
+
2
\left\{
\frac
{
\theta^{\prime} 
\left[
\begin{array}{c}
0 \\
\frac13
\end{array}
\right]
}
{
\theta
\left[
\begin{array}{c}
0 \\
\frac13
\end{array}
\right]
}
\right\}^2=0,
\end{equation*}
and 
\begin{equation*}
\frac
{
\theta^{\prime\prime}
\left[
\begin{array}{c}
1 \\
\frac13
\end{array}
\right]
}
{
\theta
\left[
\begin{array}{c}
1 \\
\frac13
\end{array}
\right]
}
+
2
\frac
{
\theta^{\prime\prime}
\left[
\begin{array}{c}
0 \\
\frac23
\end{array}
\right]
}
{
\theta
\left[
\begin{array}{c}
0 \\
\frac23
\end{array}
\right]
}
-
\frac
{
\theta^{\prime\prime \prime}
\left[
\begin{array}{c}
1 \\
1
\end{array}
\right]
}
{
\theta^{\prime}
\left[
\begin{array}{c}
1 \\
1
\end{array}
\right]
}
-
4
\frac
{
\theta^{\prime} 
\left[
\begin{array}{c}
0 \\
\frac23
\end{array}
\right]
}
{
\theta
\left[
\begin{array}{c}
0 \\
\frac23
\end{array}
\right]
}
\cdot
\frac
{
\theta^{\prime} 
\left[
\begin{array}{c}
1 \\
\frac13
\end{array}
\right]
}
{
\theta
\left[
\begin{array}{c}
1 \\
\frac13
\end{array}
\right]
}
+
2
\left\{
\frac
{
\theta^{\prime} 
\left[
\begin{array}{c}
0 \\
\frac23
\end{array}
\right]
}
{
\theta
\left[
\begin{array}{c}
0 \\
\frac23
\end{array}
\right]
}
\right\}^2=0.
\end{equation*}
Summing both sides of these equations yields 
\begin{align*}
&
\frac
{
\theta^{\prime\prime}
\left[
\begin{array}{c}
1 \\
\frac13
\end{array}
\right]
}
{
\theta
\left[
\begin{array}{c}
1 \\
\frac13
\end{array}
\right]
}
+
\frac
{
\theta^{\prime\prime}
\left[
\begin{array}{c}
0 \\
\frac13
\end{array}
\right]
}
{
\theta
\left[
\begin{array}{c}
0 \\
\frac13
\end{array}
\right]
}
+
\frac
{
\theta^{\prime\prime}
\left[
\begin{array}{c}
0 \\
\frac23
\end{array}
\right]
}
{
\theta
\left[
\begin{array}{c}
0 \\
\frac23
\end{array}
\right]
}
-
\frac
{
\theta^{\prime\prime \prime}
\left[
\begin{array}{c}
1 \\
1
\end{array}
\right]
}
{
\theta^{\prime}
\left[
\begin{array}{c}
1 \\
1
\end{array}
\right]
}  \\
&
+
2\left\{
\frac
{
\theta^{\prime}
\left[
\begin{array}{c}
0 \\
\frac13
\end{array}
\right]
}
{
\theta
\left[
\begin{array}{c}
0 \\
\frac13
\end{array}
\right]
}
-
\frac
{
\theta^{\prime}
\left[
\begin{array}{c}
0 \\
\frac23
\end{array}
\right]
}
{
\theta
\left[
\begin{array}{c}
0 \\
\frac23
\end{array}
\right]
}
\right\}
\frac
{
\theta^{\prime}
\left[
\begin{array}{c}
1 \\
\frac13
\end{array}
\right]
}
{
\theta
\left[
\begin{array}{c}
1\\
\frac13
\end{array}
\right]
}
+
\left\{
\frac
{
\theta^{\prime}
\left[
\begin{array}{c}
0 \\
\frac13
\end{array}
\right]
}
{
\theta
\left[
\begin{array}{c}
0 \\
\frac13
\end{array}
\right]
}
\right\}^2
+
\left\{
\frac
{
\theta^{\prime}
\left[
\begin{array}{c}
0 \\
\frac23
\end{array}
\right]
}
{
\theta
\left[
\begin{array}{c}
0 \\
\frac23
\end{array}
\right]
}
\right\}^2=0. 
\end{align*}
The theorem follows from 
the heat equation (\ref{eqn:heat}), Proposition \ref{prop:preliminary-1/3-2/3}, equation (\ref{eqn:product-1-1/3-tau-2tau}) and Theorems \ref{thm:1-1/3}, \ref{thm:0-1/3}. 
\end{proof}

\subsection{Convolution sums}

\begin{theorem}
\label{thm:convolution-delta-epsilon}
{\it
For each $n\in\mathbb{N},$ set 
\begin{equation*}
\delta^{*}(n)=d^{*}_{1,3}(n)-d^{*}_{2,3}(n), \,\,
\text{and} \,\,
\epsilon^{*}(n)=d^{*}_{1,6}(n)+d^{*}_{2,6}(n)-d^{*}_{4,6}(n)-d^{*}_{5,6}(n).
\end{equation*}
Then, for $n\ge 2,$ 
\begin{equation}
\label{eqn:delta}
\sum_{k=1}^{n-1}
\delta^{*}(k)\delta^{*}(n-k)
=
\sigma(n/2)-2\sigma(n/3)+\sigma(n/6), 
\end{equation}
and
\begin{equation}
\label{eqn:epsilon}
\sum_{k=1}^{n-1}
\epsilon^{*}(k)\epsilon^{*}(n-k)
=
\sigma(n/2)+2\sigma(n/3)-11\sigma(n/6)+8\sigma(n/12). 
\end{equation}
}
\end{theorem}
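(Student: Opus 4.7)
The plan is to read off both convolution identities by squaring the two generating-function formulas derived inside the proof of Proposition \ref{prop:preliminary-1/3-2/3}, then combining with Theorems \ref{thm:0-1/3} and \ref{thm:0-2/3} and matching coefficients of $x^N$ where $x=\exp(\pi i \tau)$.

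\textbf{Step 1 (set up generating functions).} From the product-identity computation inside the proof of Proposition \ref{prop:preliminary-1/3-2/3} we have
\begin{equation*}
\frac{\theta^{\prime}\left[\begin{array}{c}0\\ 1/3\end{array}\right](0,\tau)}{\theta\left[\begin{array}{c}0\\ 1/3\end{array}\right](0,\tau)}
= -2\sqrt{3}\,\pi\, D(x),
\qquad
\frac{\theta^{\prime}\left[\begin{array}{c}0\\ 2/3\end{array}\right](0,\tau)}{\theta\left[\begin{array}{c}0\\ 2/3\end{array}\right](0,\tau)}
= -2\sqrt{3}\,\pi\, E(x),
\end{equation*}
where $D(x):=\sum_{n\ge 1}\delta^{*}(n)x^{n}$ and $E(x):=\sum_{n\ge 1}\epsilon^{*}(n)x^{n}$. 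Squaring gives
$\left(\theta^{\prime}[{}^{0}_{j/3}]/\theta[{}^{0}_{j/3}]\right)^{2}=12\pi^{2}D(x)^{2}$ (resp.\ $12\pi^{2}E(x)^{2}$), whose coefficient of $x^{N}$ equals $12\pi^{2}\sum_{k=1}^{N-1}\delta^{*}(k)\delta^{*}(N-k)$ (resp.\ the analogous sum for $\epsilon^{*}$), since $\delta^{*}(0)=\epsilon^{*}(0)=0$.

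\textbf{Step 2 (convert to an eta quotient).} Theorems \ref{thm:0-1/3} and \ref{thm:0-2/3} give
\begin{equation*}
12\pi^{2}D(x)^{2}=-2\pi i\,\frac{d}{d\tau}\log\frac{\eta^{4}(3\tau/2)}{\eta^{3}(\tau)\eta(3\tau)},
\qquad
12\pi^{2}E(x)^{2}=-2\pi i\,\frac{d}{d\tau}\log\frac{\eta^{11}(3\tau)}{\eta^{3}(\tau)\eta^{4}(3\tau/2)\eta^{4}(6\tau)}.
\end{equation*}
Substituting $\eta(\alpha\tau)=x^{\alpha/12}\prod_{n\ge 1}(1-x^{2\alpha n})$ one checks that the $x^{c/24}$ prefactors cancel in both eta quotients (the exponents are $0$), so both quotients are genuine power series in $x$ with constant term $1$.

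\textbf{Step 3 (extract Lambert sums).} Using $d/d\tau=\pi i\,x\,d/dx$ together with the standard identity
\begin{equation*}
x\frac{d}{dx}\log\prod_{n\ge 1}(1-x^{kn})=-\sum_{n\ge 1}\frac{kn\,x^{kn}}{1-x^{kn}}=-\sum_{N\ge 1}k\,\sigma(N/k)\,x^{N},
\end{equation*}
I expand the logarithmic derivative of each eta quotient as a Lambert series. For the first quotient this yields
$\sum_{N}\bigl(-12\sigma(N/3)+6\sigma(N/2)+6\sigma(N/6)\bigr)x^{N}$, whence after dividing by $12$ and matching coefficients I obtain (\ref{eqn:delta}). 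For the second quotient the same procedure produces $\sum_{N}\bigl(-66\sigma(N/6)+6\sigma(N/2)+12\sigma(N/3)+48\sigma(N/12)\bigr)x^{N}$, which after division by $12$ yields (\ref{eqn:epsilon}).

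\textbf{Anticipated difficulty.} There is no conceptual obstacle, only bookkeeping: one must keep straight that the generating variable is $x=e^{\pi i\tau}$ (not $q=e^{2\pi i\tau}$), verify that the $x^{c/24}$ prefactors in the two eta quotients cancel so that the Lambert-series expansion is legitimate on both sides, and confirm the translation $\sum_{n}n x^{kn}/(1-x^{kn})=\sum_{N}\sigma(N/k)x^{N}$ using the convention $\sigma(N/k)=0$ when $k\nmid N$. Once these are in place, the two identities follow by matching coefficients of $x^{N}$.
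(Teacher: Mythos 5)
Your proposal is correct and takes essentially the same route as the paper: the paper's own proof likewise reads off the logarithmic derivatives of the two theta constants with characteristics $(0,1/3)$ and $(0,2/3)$ as $-2\sqrt{3}\pi$ times the generating functions of $\delta^{*}$ and $\epsilon^{*}$, squares them, and invokes Theorems \ref{thm:0-1/3} and \ref{thm:0-2/3} before matching coefficients. One small bookkeeping correction in your Step 3: since $-2\pi i\,\frac{d}{d\tau}=2\pi^{2}\,x\frac{d}{dx}$ and the left-hand sides are $12\pi^{2}D(x)^{2}$ and $12\pi^{2}E(x)^{2}$, the displayed Lambert series must be divided by $6$ rather than $12$, which is exactly what turns $(6,-12,6)$ into $(1,-2,1)$ and $(6,12,-66,48)$ into $(1,2,-11,8)$ as required.
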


\begin{proof}
For every $\tau\in\mathbb{H}^2,$ set $x=\exp(\pi i \tau).$
Jacobi's triple product identity (\ref{eqn:Jacobi-triple}) yields 
\begin{equation*}
\frac
{
\theta^{\prime} 
\left[
\begin{array}{c}
0 \\
\frac13
\end{array}
\right]
}
{
\theta
\left[
\begin{array}{c}
0 \\
\frac13
\end{array}
\right]
}
=
-2\sqrt{3}\pi \sum_{n=1}^{\infty}(d^{*}_{1,3}(n)-d_{2,3}^{*}(n)) x^n,
\end{equation*}
and
\begin{equation*}
\frac
{
\theta^{\prime} 
\left[
\begin{array}{c}
0 \\
\frac23
\end{array}
\right]
}
{
\theta
\left[
\begin{array}{c}
0 \\
\frac23
\end{array}
\right]
}
=
-2\sqrt{3}\pi \sum_{n=1}^{\infty}(d^{*}_{1,6}(n)+d_{2,6}^{*}(n)-d_{4,6}^{*}(n)-d^{*}_{5,6}(n)) x^n. 
\end{equation*}
The theorem follows from Theorems \ref{thm:0-1/3} and \ref{thm:0-2/3}. 
\end{proof}

\begin{theorem}
\label{thm:convolution-delta*epsilon}
{\it
For each $n\in\mathbb{N},$ set 
\begin{equation*}
\delta^{*}(n)=d^{*}_{1,3}(n)-d^{*}_{2,3}(n), \,\,
\text{and} \,\,
\epsilon^{*}(n)=d^{*}_{1,6}(n)+d^{*}_{2,6}(n)-d^{*}_{4,6}(n)-d^{*}_{5,6}(n).
\end{equation*}
Then, for $n\ge 2,$ 
\begin{equation*}
\label{eqn:delta}
\sum_{k=1}^{n-1}
\delta^{*}(k)\epsilon^{*}(n-k)
=
\begin{cases}
0 &\text{if $n$ is odd,} \\
\sigma(n)-2\sigma(n/2)-\sigma(n/3)+12\sigma(n/6)  &\text{if $n$ is even.}
\end{cases}
\end{equation*}
}
\end{theorem}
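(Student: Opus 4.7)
The plan is to follow the elliptic-function/heat-equation strategy of Theorems \ref{thm:0-1/3}, \ref{thm:0-2/3}, and \ref{thm:convolution-delta-epsilon}. Write $\theta[\epsilon,\epsilon']$ as shorthand for the theta constants $\theta\left[\begin{smallmatrix}\epsilon\\ \epsilon'\end{smallmatrix}\right]$. By the $q$-series formulas recalled in the proof of Theorem \ref{thm:convolution-delta-epsilon}, the product
$$P(\tau) \;:=\; \frac{\theta^{\prime}[0,1/3]}{\theta[0,1/3]}(0,\tau)\cdot \frac{\theta^{\prime}[0,2/3]}{\theta[0,2/3]}(0,\tau)$$
equals $12\pi^{2}\sum_{n\ge 2}\bigl(\sum_{k=1}^{n-1}\delta^{*}(k)\epsilon^{*}(n-k)\bigr)x^{n}$. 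Thus, once $P(\tau)$ is rewritten as $-2\pi i$ times the logarithmic $\tau$-derivative of an explicit eta quotient, the convolution identity will follow by expanding via $\frac{1}{2\pi i}\frac{d}{d\tau}\log\eta(m\tau) = \frac{m}{24} - m\sum_{N}\sigma(N/m)q^{N}$.

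To produce such a representation, I will introduce the elliptic function
$$\varphi(z) \;=\; \frac{\theta[0,1/3](z)\,\theta[0,2/3](z)\,\theta[1,0](z)}{\theta^{3}[1,1](z)}.$$
The sum of upper characteristics in the numerator is $0+0+1=1$ and of lower is $1/3+2/3+0=1$, both odd, so $\varphi(z+1)=\varphi(z+\tau)=\varphi(z)$, i.e., $\varphi$ is elliptic. Its only pole in a fundamental parallelogram is the triple pole at $z=0$, and hence $\mathrm{Res}(\varphi,0)=0$. Laurent-expanding the numerator and $\theta[1,1](z) = \theta^{\prime}[1,1]\,z + \tfrac{1}{6}\theta^{\prime\prime\prime}[1,1]\,z^{3} + \cdots$ about $z=0$, and using that $\theta[1,0]$ is even so $\theta^{\prime}[1,0](0)=0$, the vanishing of the residue produces the identity
$$\frac{\theta^{\prime\prime}[0,1/3]}{\theta[0,1/3]} + \frac{\theta^{\prime\prime}[0,2/3]}{\theta[0,2/3]} + \frac{\theta^{\prime\prime}[1,0]}{\theta[1,0]} - \frac{\theta^{\prime\prime\prime}[1,1]}{\theta^{\prime}[1,1]} + 2P(\tau) = 0.$$

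Applying the heat equation (\ref{eqn:heat}) to each $\theta^{\prime\prime}/\theta$ and to $\theta^{\prime\prime\prime}[1,1]/\theta^{\prime}[1,1]$ converts the first four summands into $4\pi i\,\frac{d}{d\tau}\log\bigl(\theta[0,1/3]\theta[0,2/3]\theta[1,0]/\theta^{\prime}[1,1]\bigr)$. Jacobi's triple product identity (\ref{eqn:Jacobi-triple}) identifies the theta constants as eta quotients: $\theta[0,1/3]\theta[0,2/3] = \eta(\tau)\eta(2\tau)\eta(3\tau)/\eta(6\tau)$, $\theta[1,0] = 2\eta^{2}(2\tau)/\eta(\tau)$, and $\theta^{\prime}[1,1] = -2\pi\eta^{3}(\tau)$, which together yield
$$P(\tau) = -2\pi i\,\frac{d}{d\tau}\log\frac{\eta^{3}(2\tau)\eta(3\tau)}{\eta^{3}(\tau)\eta(6\tau)}.$$
Since the right-hand side is a power series in $q = x^{2}$, every odd-index $x^{n}$-coefficient of $P(\tau)$ vanishes, whence $\sum_{k}\delta^{*}(k)\epsilon^{*}(n-k)=0$ for odd $n$; for even $n$, the standard $\log\eta$-derivative expansion produces the claimed combination of $\sigma$-values.

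The main obstacle will be the residue calculation itself: tracking the Laurent expansion of the triple product in the numerator, and in particular the interaction between its $z^{2}$-coefficient and the $z^{2}$-correction in the expansion of $1/\theta^{3}[1,1](z)$, so that the evenness of $\theta[1,0]$ eliminates three would-be cross-terms and leaves precisely the mixed product $2(\theta^{\prime}[0,1/3]/\theta[0,1/3])(\theta^{\prime}[0,2/3]/\theta[0,2/3])$. The remaining bookkeeping---matching the $\eta$-quotient expansion to the stated right-hand side of the theorem---is then routine.
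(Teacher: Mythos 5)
Your argument coincides with the paper's proof step for step: the same elliptic function $\varphi(z)=\theta\left[\begin{smallmatrix}0\\ \frac13\end{smallmatrix}\right](z)\,\theta\left[\begin{smallmatrix}0\\ \frac23\end{smallmatrix}\right](z)\,\theta\left[\begin{smallmatrix}1\\ 0\end{smallmatrix}\right](z)\big/\theta^{3}\left[\begin{smallmatrix}1\\ 1\end{smallmatrix}\right](z)$, the same vanishing-residue identity (yours is twice the paper's; the evenness of $\theta\left[\begin{smallmatrix}1\\ 0\end{smallmatrix}\right]$ removes the unwanted cross-terms), the same heat-equation step, and the same final identity $P(\tau)=-2\pi i\,\frac{d}{d\tau}\log\frac{\eta^{3}(2\tau)\eta(3\tau)}{\eta^{3}(\tau)\eta(6\tau)}$, with correct eta-quotient identifications. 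One point to flag, which concerns the printed statement rather than your reasoning: since the eta side is a series in $q=x^{2}$, its expansion gives, for even $n$, $\sigma(n)-2\sigma(n/2)-\sigma(n/3)+2\sigma(n/6)$ (equivalently $\sigma(n/2)-2\sigma(n/4)-\sigma(n/6)+2\sigma(n/12)$), not the coefficient $12$ on $\sigma(n/6)$ as printed (at $n=6$ the convolution equals $3$, not $13$), so the ``claimed combination'' your last step refers to should be read with $2$ in place of $12$.
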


\begin{proof}
Set $x=\exp(\pi i \tau)$ and $q=\exp(2\pi i \tau).$ 
Consider the following elliptic function:
\begin{equation*}
\varphi(z)
=
\frac
{
\theta
\left[
\begin{array}{c}
0 \\
\frac13
\end{array}
\right](z)
\theta
\left[
\begin{array}{c}
0 \\
\frac23
\end{array}
\right](z) 
\theta
\left[
\begin{array}{c}
1 \\
0
\end{array}
\right](z) 
}
{
\theta^3
\left[
\begin{array}{c}
1 \\
1
\end{array}
\right](z)
}. 
\end{equation*}
In the fundamental parallelogram, 
the pole of $\varphi(z)$ is $z=0,$ 
which implies that 
$\mathrm{Res} (\varphi(z), 0)=0.$ 
Therefore, it follows that 
\begin{equation*}
\frac12
\frac
{
\theta^{\prime\prime}
\left[
\begin{array}{c}
0 \\
\frac13
\end{array}
\right]
}
{
\theta
\left[
\begin{array}{c}
0 \\
\frac13
\end{array}
\right]
}
+
\frac12
\frac
{
\theta^{\prime\prime}
\left[
\begin{array}{c}
0 \\
\frac23
\end{array}
\right]
}
{
\theta
\left[
\begin{array}{c}
0 \\
\frac23
\end{array}
\right]
}
+
\frac12
\frac
{
\theta^{\prime\prime}
\left[
\begin{array}{c}
1 \\
0
\end{array}
\right]
}
{
\theta
\left[
\begin{array}{c}
1 \\
0
\end{array}
\right]
}
-
\frac12
\frac
{
\theta^{\prime\prime \prime}
\left[
\begin{array}{c}
1 \\
1
\end{array}
\right]
}
{
\theta^{\prime}
\left[
\begin{array}{c}
1 \\
1
\end{array}
\right]
}
+
\frac
{
\theta^{\prime} 
\left[
\begin{array}{c}
0 \\
\frac13
\end{array}
\right]
}
{
\theta
\left[
\begin{array}{c}
0 \\
\frac13
\end{array}
\right]
}
\cdot
\frac
{
\theta^{\prime} 
\left[
\begin{array}{c}
0 \\
\frac23
\end{array}
\right]
}
{
\theta
\left[
\begin{array}{c}
0 \\
\frac23
\end{array}
\right]
}
=0. 
\end{equation*}
The heat equation (\ref{eqn:heat}) and Jacobi's triple product identity (\ref{eqn:Jacobi-triple}) yield
\begin{equation*}
\frac
{
\theta^{\prime} 
\left[
\begin{array}{c}
0 \\
\frac13
\end{array}
\right]
}
{
\theta
\left[
\begin{array}{c}
0 \\
\frac13
\end{array}
\right]
}
\cdot
\frac
{
\theta^{\prime} 
\left[
\begin{array}{c}
0 \\
\frac23
\end{array}
\right]
}
{
\theta
\left[
\begin{array}{c}
0 \\
\frac23
\end{array}
\right]
}
=
-2\pi i \frac{d}{d\tau}
\log
\frac{\eta(3\tau)\eta^3(2\tau)}{\eta^3(\tau) \eta(6\tau)},
\end{equation*}
which proves the theorem. 
\end{proof}

\begin{theorem}
\label{thm:convolution-delta-epsilon-k+2l}
{\it
For each $n\in\mathbb{N},$ set 
\begin{equation*}
\delta^{*}(n)=d^{*}_{1,3}(n)-d^{*}_{2,3}(n), \,\,
\text{and} \,\,
\epsilon^{*}(n)=d^{*}_{1,6}(n)+d^{*}_{2,6}(n)-d^{*}_{4,6}(n)-d^{*}_{5,6}(n).
\end{equation*}
Then, for $n\ge 3,$ 
\begin{equation*}
\label{eqn:delta}
\sum_{
\tiny{
\begin{matrix}
(k,l)\in\mathbb{N}^2 \\ 
2k+l=n
\end{matrix}
}
}
\delta^{*}(k)\delta^{*}(l)
=
\sigma(n/3)-\sigma(n/4)-\sigma(n/6)+\sigma(n/12), 
\end{equation*}
\begin{equation*}
\label{eqn:delta}
\sum_{
\tiny{
\begin{matrix}
(k,l)\in\mathbb{N}^2 \\ 
2k+l=n
\end{matrix}
}
}
\delta^{*}(k)\epsilon^{*}(l)
=
\sigma(n/3)+\sigma(n/4)-5\sigma(n/6)+3\sigma(n/12).
\end{equation*}
}
\end{theorem}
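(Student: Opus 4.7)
The plan is to leverage a key linear identity relating the generating functions of $\epsilon^*$ and $\delta^*$, thereby reducing both convolutions to expressions already computed earlier in the paper.

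First, I would set $B(\tau) := \theta'\!\left[\begin{array}{c}0\\1/3\end{array}\right](0,\tau)\big/\theta\!\left[\begin{array}{c}0\\1/3\end{array}\right](0,\tau)$ and $D(\tau) := \theta'\!\left[\begin{array}{c}0\\2/3\end{array}\right](0,\tau)\big/\theta\!\left[\begin{array}{c}0\\2/3\end{array}\right](0,\tau)$. By Jacobi's triple product identity (\ref{eqn:Jacobi-triple}) these equal $-2\sqrt{3}\,\pi$ times the $x$-generating series of $\delta^*$ and $\epsilon^*$ respectively, with $x = e^{\pi i\tau}$. Replacing $\tau$ by $2\tau$ in the first replaces $x^n$ by $x^{2n}$, so the two sums appearing in the statement are, up to the common factor $12\pi^{2}$, the coefficients of $x^n$ in $B(\tau)B(2\tau)$ and $B(2\tau)D(\tau)$, respectively.

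The crucial intermediate step will be the identity $D(\tau) = B(\tau) + 2 B(2\tau)$. Writing $A_j(\tau) := \theta'[1,j/3](0,\tau)/\theta[1,j/3](0,\tau)$, Proposition \ref{prop:preliminary-1/3-2/3} provides both the ``doubling'' formula $A_2(\tau) = A_1(\tau) + 2 A_1(2\tau)$ (its second identity, rearranged) and the relation $\theta'[0,j/3]/\theta[0,j/3] = A_j(\tau/2) - A_j(\tau)$ (its third identity). Inserting the doubling formula at $\tau/2$ and $\tau$ into $D(\tau) = A_2(\tau/2) - A_2(\tau)$ and regrouping gives $A_1(\tau/2) - A_1(\tau) + 2\big[A_1(\tau) - A_1(2\tau)\big] = B(\tau) + 2B(2\tau)$. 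This is the main conceptual obstacle: without it one would need to evaluate bilinear expressions such as $A_1(\tau/2)A_1(2\tau)$ directly, and no identity of this form has been established in the paper.

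Once $D(\tau) = B(\tau) + 2B(2\tau)$ is available, multiplying by $B(\tau)$ and by $B(2\tau)$ yields
\begin{equation*}
B(\tau)B(2\tau) = \tfrac{1}{2}\big[B(\tau)D(\tau) - B(\tau)^2\big], \qquad B(2\tau)D(\tau) = B(\tau)B(2\tau) + 2 B(2\tau)^2.
\end{equation*}
Theorem \ref{thm:0-1/3}, together with its rescaling $\tau \mapsto 2\tau$, supplies $B(\tau)^2$ and $B(2\tau)^2$ as logarithmic derivatives of explicit eta quotients, and Theorem \ref{thm:convolution-delta*epsilon} supplies $B(\tau)D(\tau)$ similarly. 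Substituting and simplifying displays both $B(\tau)B(2\tau)$ and $B(2\tau)D(\tau)$ as $\pi i$ times $\tau$-logarithmic derivatives of eta quotients. Finally, I would extract coefficients via the standard expansion $\sum_{N\geq 1}\sigma(N/k)x^N = \tfrac{1}{24} - \tfrac{1}{k\pi i}\tfrac{d}{d\tau}\log\eta(k\tau/2)$, which converts each eta-quotient logarithmic derivative into a linear combination of $\sigma(N/k)$ with $k \in \{2,3,4,6,12\}$; this final step is routine arithmetic and reproduces exactly the two right-hand sides in the statement.
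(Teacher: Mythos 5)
Your proposal is correct and is essentially the paper's own argument: the paper's proof rests on the same linear identity (stated there as $\theta'\left[\begin{smallmatrix}0\\ 1/3\end{smallmatrix}\right]/\theta\left[\begin{smallmatrix}0\\ 1/3\end{smallmatrix}\right]-\theta'\left[\begin{smallmatrix}0\\ 2/3\end{smallmatrix}\right]/\theta\left[\begin{smallmatrix}0\\ 2/3\end{smallmatrix}\right]=4\sqrt{3}\pi\sum_{n\ge 1}\delta^{*}(n)x^{2n}$, which is exactly $D(\tau)=B(\tau)+2B(2\tau)$ in your notation) and likewise reduces the two convolutions to the products $(B-D)B$ and $(B-D)D$, i.e.\ to combinations of $B^2$, $D^2$ and $BD$ supplied by Theorems \ref{thm:0-1/3}, \ref{thm:0-2/3} and \ref{thm:convolution-delta*epsilon}. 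Your substitution of $B(2\tau)^2$ (Theorem \ref{thm:0-1/3} rescaled) for $D^2$ is an equivalent bookkeeping choice and does not change the method.
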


\begin{proof}
For every $\tau\in\mathbb{H}^2,$ 
set $x=\exp(\pi i \tau).$ 
Note that 
\begin{equation*}
\frac
{
\theta^{\prime} 
\left[
\begin{array}{c}
0 \\
\frac13
\end{array}
\right]
}
{
\theta
\left[
\begin{array}{c}
0 \\
\frac13
\end{array}
\right]
}
-
\frac
{
\theta^{\prime} 
\left[
\begin{array}{c}
0 \\
\frac23
\end{array}
\right]
}
{
\theta
\left[
\begin{array}{c}
0 \\
\frac23
\end{array}
\right]
}
=
4\sqrt{3}\pi \sum_{n=1}^{\infty} (d_{1,3}^{*}(n)-d_{2,3}^{*}(n)) x^{2n}. 
\end{equation*}
The theorem can be proved by considering 
\begin{equation*}
\left\{
\frac
{
\theta^{\prime} 
\left[
\begin{array}{c}
0 \\
\frac13
\end{array}
\right]
}
{
\theta
\left[
\begin{array}{c}
0 \\
\frac13
\end{array}
\right]
}
-
\frac
{
\theta^{\prime} 
\left[
\begin{array}{c}
0 \\
\frac23
\end{array}
\right]
}
{
\theta
\left[
\begin{array}{c}
0 \\
\frac23
\end{array}
\right]
}
\right\}
\frac
{
\theta^{\prime} 
\left[
\begin{array}{c}
0 \\
\frac13
\end{array}
\right]
}
{
\theta
\left[
\begin{array}{c}
0 \\
\frac13
\end{array}
\right]
}, \,\,
\mathrm{and} \,\,
\left\{
\frac
{
\theta^{\prime} 
\left[
\begin{array}{c}
0 \\
\frac13
\end{array}
\right]
}
{
\theta
\left[
\begin{array}{c}
0 \\
\frac13
\end{array}
\right]
}
-
\frac
{
\theta^{\prime} 
\left[
\begin{array}{c}
0 \\
\frac23
\end{array}
\right]
}
{
\theta
\left[
\begin{array}{c}
0 \\
\frac23
\end{array}
\right]
}
\right\}
\frac
{
\theta^{\prime} 
\left[
\begin{array}{c}
0 \\
\frac23
\end{array}
\right]
}
{
\theta
\left[
\begin{array}{c}
0 \\
\frac23
\end{array}
\right]
}. 
\end{equation*}
\end{proof}

\subsubsection*{Remark}
For each $n\in\mathbb{N},$ set $\delta(n)=d_{1,3}(n)-d_{2,3}(n).$ 
Using the theory of theta functions, 
Farkas \cite{Farkas} showed that for each $n\in\mathbb{N}_0,$ 
\begin{equation*}
\sigma(3n+2)
=
3\sum_{k=0}^n 
\delta(3k+1) \delta(3(n-k)+1).
\end{equation*}

\section{Case for $\Gamma_{o}(8)$}

\subsection{Theorem for $\epsilon=1, \epsilon^{\prime}=1/4, 3/4$}

\begin{theorem}
\label{thm:1-1/4,3/4}
{\it
For every $\tau\in\mathbb{H}^2,$ 
we have 
\begin{equation*}
\frac{d}{d\tau} 
\log \frac{\eta^3(8\tau)}{\eta^2(\tau) \eta(4\tau)}
+
\frac{1}{2\pi i \cdot 2} 
\left[
\left\{
\frac
{
\theta^{\prime} 
\left[
\begin{array}{c}
1 \\
\frac14
\end{array}
\right](0,\tau) 
}
{
\theta
\left[
\begin{array}{c}
1 \\
\frac14
\end{array}
\right](0,\tau)
}
\right\}^2
+
\left\{
\frac
{
\theta^{\prime} 
\left[
\begin{array}{c}
1 \\
\frac34
\end{array}
\right](0,\tau) 
}
{
\theta
\left[
\begin{array}{c}
1 \\
\frac34
\end{array}
\right](0,\tau)
}
\right\}^2
\right]=0. 
\end{equation*}
}
\end{theorem}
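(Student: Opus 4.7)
The plan is to combine the vanishing residue of one elliptic function with Theorem~\ref{thm:1-1/2} applied at $2\tau$; the algebraic identity $2\alpha\beta+(\alpha-\beta)^2=\alpha^2+\beta^2$ then produces the claimed sum of squares. Throughout I write $\alpha:=\theta'\bigl[\begin{smallmatrix}1\\1/4\end{smallmatrix}\bigr](0,\tau)/\theta\bigl[\begin{smallmatrix}1\\1/4\end{smallmatrix}\bigr](0,\tau)$ and $\beta:=\theta'\bigl[\begin{smallmatrix}1\\3/4\end{smallmatrix}\bigr](0,\tau)/\theta\bigl[\begin{smallmatrix}1\\3/4\end{smallmatrix}\bigr](0,\tau)$.

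First, consider
\begin{equation*}
\varphi(z)=\frac{\theta\bigl[\begin{smallmatrix}1\\0\end{smallmatrix}\bigr](z)\,\theta\bigl[\begin{smallmatrix}1\\1/4\end{smallmatrix}\bigr](z)\,\theta\bigl[\begin{smallmatrix}1\\3/4\end{smallmatrix}\bigr](z)}{\theta^3\bigl[\begin{smallmatrix}1\\1\end{smallmatrix}\bigr](z)}.
\end{equation*}
Since $0+\tfrac{1}{4}+\tfrac{3}{4}\equiv 1\pmod{2}$, the transformation factors under $z\mapsto z+1$ and $z\mapsto z+\tau$ from (\ref{eqn:integer-char}) cancel, so $\varphi$ is doubly periodic with a single triple pole at $z=0$. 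The function $\theta\bigl[\begin{smallmatrix}1\\0\end{smallmatrix}\bigr](z)$ is even in $z$, so $\theta'\bigl[\begin{smallmatrix}1\\0\end{smallmatrix}\bigr]=0$, and the vanishing of the residue at $z=0$ reduces to
\begin{equation*}
\frac{\theta''\bigl[\begin{smallmatrix}1\\0\end{smallmatrix}\bigr]}{\theta\bigl[\begin{smallmatrix}1\\0\end{smallmatrix}\bigr]}+\frac{\theta''\bigl[\begin{smallmatrix}1\\1/4\end{smallmatrix}\bigr]}{\theta\bigl[\begin{smallmatrix}1\\1/4\end{smallmatrix}\bigr]}+\frac{\theta''\bigl[\begin{smallmatrix}1\\3/4\end{smallmatrix}\bigr]}{\theta\bigl[\begin{smallmatrix}1\\3/4\end{smallmatrix}\bigr]}+2\alpha\beta=\frac{\theta'''\bigl[\begin{smallmatrix}1\\1\end{smallmatrix}\bigr]}{\theta'\bigl[\begin{smallmatrix}1\\1\end{smallmatrix}\bigr]}.
\end{equation*}
I apply the heat equation (\ref{eqn:heat}) to convert each $\theta''/\theta$ into $4\pi i\,\partial_\tau\log\theta$. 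By Lemma~\ref{lem:Farkas-Kra} at $\zeta=0$ with $\epsilon=\delta=1$, $\epsilon'=1/4$, $\delta'=3/4$, the vanishing of $\theta\bigl[\begin{smallmatrix}1\\1\end{smallmatrix}\bigr](0,2\tau)$ reduces its right-hand side to $\theta\bigl[\begin{smallmatrix}0\\1\end{smallmatrix}\bigr](0,2\tau)\theta\bigl[\begin{smallmatrix}1\\1/2\end{smallmatrix}\bigr](0,2\tau)$, and Jacobi's triple product (\ref{eqn:Jacobi-triple}) further identifies this with $\sqrt{2}\,\eta^2(\tau)\eta(8\tau)/\eta(4\tau)$. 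Combined with the standard identities $\theta\bigl[\begin{smallmatrix}1\\0\end{smallmatrix}\bigr]=2\eta^2(2\tau)/\eta(\tau)$ and $\theta'\bigl[\begin{smallmatrix}1\\1\end{smallmatrix}\bigr]=-2\pi\eta^3(\tau)$, I obtain
\begin{equation*}
4\pi i\,\frac{d}{d\tau}\log\frac{\eta^2(2\tau)\eta(8\tau)}{\eta^2(\tau)\eta(4\tau)}+2\alpha\beta=0. \qquad (\ast)
\end{equation*}

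Next I apply Theorem~\ref{thm:1-1/2} with $\tau$ replaced by $2\tau$. The derivative formulas (\ref{eqn:analogue-1-1/4}) and (\ref{eqn:analogue-1-3/4}) give $\alpha-\beta=2\pi\theta^2\bigl[\begin{smallmatrix}0\\0\end{smallmatrix}\bigr](0,4\tau)$, while (\ref{eqn:analogue-1-1/2}) at $2\tau$ gives $(\theta'\bigl[\begin{smallmatrix}1\\1/2\end{smallmatrix}\bigr](0,2\tau)/\theta\bigl[\begin{smallmatrix}1\\1/2\end{smallmatrix}\bigr](0,2\tau))^2=\pi^2\theta^4\bigl[\begin{smallmatrix}0\\0\end{smallmatrix}\bigr](0,4\tau)=\tfrac{1}{4}(\alpha-\beta)^2$. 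Multiplying through, Theorem~\ref{thm:1-1/2} at $2\tau$ rearranges to
\begin{equation*}
4\pi i\,\frac{d}{d\tau}\log\frac{\eta^2(8\tau)}{\eta^2(2\tau)}+(\alpha-\beta)^2=0. \qquad (\ast\ast)
\end{equation*}
Adding $(\ast)$ and $(\ast\ast)$, the eta quotients multiply to $\eta^3(8\tau)/(\eta^2(\tau)\eta(4\tau))$ and the theta-derivative terms collapse via $2\alpha\beta+(\alpha-\beta)^2=\alpha^2+\beta^2$, giving the theorem after division by $4\pi i$. The main technical obstacle is verifying the product formula $\theta\bigl[\begin{smallmatrix}1\\1/4\end{smallmatrix}\bigr]\theta\bigl[\begin{smallmatrix}1\\3/4\end{smallmatrix}\bigr]=\sqrt{2}\,\eta^2(\tau)\eta(8\tau)/\eta(4\tau)$: the surviving summand of Lemma~\ref{lem:Farkas-Kra} involves characteristics $\bigl[\begin{smallmatrix}2\\1\end{smallmatrix}\bigr]$ and $\bigl[\begin{smallmatrix}1\\-1/2\end{smallmatrix}\bigr]$ that must be reduced to standard form using (\ref{eqn:integer-char}) and the reflection $\theta[-\epsilon,-\epsilon'](\zeta,\tau)=\theta[\epsilon,\epsilon'](-\zeta,\tau)$ before the eta-product expansions of $\theta\bigl[\begin{smallmatrix}0\\1\end{smallmatrix}\bigr]$ and $\theta\bigl[\begin{smallmatrix}1\\1/2\end{smallmatrix}\bigr]$ are applied.
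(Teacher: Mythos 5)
Your proof is correct, and it reaches the identity by a genuinely different route than the paper's. The paper works entirely from the explicit derivative formulas (\ref{eqn:analogue-1-1/4}) and (\ref{eqn:analogue-1-3/4}): it squares $\alpha$ and $\beta$ individually, sums, and uses the duplication identities of Lemma \ref{lem:Farkas-Kra} to rewrite
$\alpha^2+\beta^2=6\pi^2\theta^4\left[\begin{smallmatrix}0\\0\end{smallmatrix}\right](0,4\tau)+\pi^2\theta^4\left[\begin{smallmatrix}1\\0\end{smallmatrix}\right](0,2\tau)$,
which it then converts into a logarithmic derivative of eta quotients by quoting the computations inside the proofs of Theorems \ref{thm:4-squares} and \ref{thm:4-triangular}. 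You instead split $\alpha^2+\beta^2=2\alpha\beta+(\alpha-\beta)^2$: the cross term comes from the vanishing residue of the elliptic function
$\theta\left[\begin{smallmatrix}1\\0\end{smallmatrix}\right](z)\theta\left[\begin{smallmatrix}1\\ \frac14\end{smallmatrix}\right](z)\theta\left[\begin{smallmatrix}1\\ \frac34\end{smallmatrix}\right](z)/\theta^3\left[\begin{smallmatrix}1\\1\end{smallmatrix}\right](z)$
together with the heat equation --- exactly the mechanism the paper deploys elsewhere (e.g.\ Propositions \ref{prop:preliminary-1-1/4,3/4} and \ref{prop:preliminary-0-1/4,3/4}) but not in this particular proof --- while the squared difference is imported wholesale from Theorem \ref{thm:1-1/2} evaluated at $2\tau$, which needs only the difference $\alpha-\beta=2\pi\theta^2\left[\begin{smallmatrix}0\\0\end{smallmatrix}\right](0,4\tau)$ of the two derivative formulas rather than their full strength. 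The price of your route is the product evaluation $\theta\left[\begin{smallmatrix}1\\ \frac14\end{smallmatrix}\right]\theta\left[\begin{smallmatrix}1\\ \frac34\end{smallmatrix}\right]=\sqrt2\,\eta^2(\tau)\eta(8\tau)/\eta(4\tau)$, which you correctly flag as the technical crux; it does check out, since the surviving summand of Lemma \ref{lem:Farkas-Kra} reduces to $\theta\left[\begin{smallmatrix}0\\1\end{smallmatrix}\right](0,2\tau)\theta\left[\begin{smallmatrix}1\\ \frac12\end{smallmatrix}\right](0,2\tau)=\frac{\eta^2(\tau)}{\eta(2\tau)}\cdot\frac{\sqrt2\,\eta(2\tau)\eta(8\tau)}{\eta(4\tau)}$ by Jacobi's triple product. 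I verified the remaining bookkeeping --- the residue identity with $\theta^{\prime}\left[\begin{smallmatrix}1\\0\end{smallmatrix}\right]=0$, the factor $\frac12$ from the chain rule when Theorem \ref{thm:1-1/2} is evaluated at $2\tau$, and the final multiplication of the two eta quotients --- and everything combines to the stated identity after division by $4\pi i$. What each approach buys: the paper's is shorter given that the two earlier theorems have already been proved in logarithmic-derivative form, while yours is more self-contained at the level of the derivative formulas (using only their difference) and makes the decomposition $2\alpha\beta+(\alpha-\beta)^2$ structurally transparent.
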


\begin{proof}
By Lemma \ref{lem:Farkas-Kra}, we first note that 
\begin{equation*}
\theta^2
\left[
\begin{array}{c}
0 \\
0
\end{array}
\right](0,\tau)
=
\theta^2
\left[
\begin{array}{c}
0 \\
0
\end{array}
\right](0,2\tau)
+
\theta^2
\left[
\begin{array}{c}
1\\
0
\end{array}
\right](0,2\tau),
\end{equation*}
and 
\begin{equation*}
\theta^2
\left[
\begin{array}{c}
1 \\
0
\end{array}
\right](0,\tau)
=
2
\theta
\left[
\begin{array}{c}
0 \\
0
\end{array}
\right](0,2\tau)
\theta
\left[
\begin{array}{c}
1\\
0
\end{array}
\right](0,2\tau),
\end{equation*}
which implies that 
\begin{equation*}
\theta^4
\left[
\begin{array}{c}
1 \\
0
\end{array}
\right](0,\tau)
=
4
\theta^2
\left[
\begin{array}{c}
0 \\
0
\end{array}
\right](0,2\tau)
\theta^2
\left[
\begin{array}{c}
1\\
0
\end{array}
\right](0,2\tau).
\end{equation*}
The derivative formulas (\ref{eqn:analogue-1-1/4}) and (\ref{eqn:analogue-1-3/4}) yield 
\begin{equation*}
\left\{
\frac
{
\theta^{\prime} 
\left[
\begin{array}{c}
1 \\
\frac14
\end{array}
\right](0,\tau) 
}
{
\theta
\left[
\begin{array}{c}
1 \\
\frac14
\end{array}
\right](0,\tau)
}
\right\}^2
+
\left\{
\frac
{
\theta^{\prime} 
\left[
\begin{array}{c}
1 \\
\frac34
\end{array}
\right](0,\tau) 
}
{
\theta
\left[
\begin{array}{c}
1 \\
\frac34
\end{array}
\right](0,\tau)
}
\right\}^2
=
6\pi^2
\theta^4
\left[
\begin{array}{c}
0 \\
0
\end{array}
\right](0,4\tau)
+
\pi^2
\theta^4
\left[
\begin{array}{c}
1 \\
0
\end{array}
\right](0,2\tau). 
\end{equation*}
By the proof of Theorems \ref{thm:4-squares} and \ref{thm:4-triangular}, we have 
\begin{align*}
6\pi^2
\theta^4
\left[
\begin{array}{c}
0 \\
0
\end{array}
\right](0,4\tau)
+
\pi^2
\theta^4
\left[
\begin{array}{c}
1 \\
0
\end{array}
\right](0,\tau)
=&
6\cdot 4\pi i
\cdot \left
(-\frac12\right)
\cdot 
\frac{d}{d\tau}
\log \frac{\eta(8\tau)}{\eta(2\tau)}
-
4\pi i
\frac{d}{d\tau}
\log
\frac{\eta^3(2\tau)}{\eta^2(\tau)\eta(4\tau)}  \\
=&
4\pi i 
\frac{d}{d\tau}
\log
\frac{\eta^2(\tau)\eta(4\tau)}{\eta^3(8\tau)}, 
\end{align*}
which proves the theorem. 
\end{proof}

\subsection{Theorem for $\epsilon=0, \epsilon^{\prime}=1/4, 3/4$}

\begin{theorem}
\label{thm:0-1/4,3/4}
{\it
For every $\tau\in\mathbb{H}^2,$ 
we have 
\begin{equation*}
\frac{d}{d\tau} 
\log \frac{\eta^8(4\tau)}{\eta^2(\tau) \eta^3(2\tau) \eta^3(8\tau)}
+
\frac{1}{2\pi i \cdot 2} 
\left[
\left\{
\frac
{
\theta^{\prime} 
\left[
\begin{array}{c}
0 \\
\frac14
\end{array}
\right](0,\tau) 
}
{
\theta
\left[
\begin{array}{c}
0 \\
\frac14
\end{array}
\right](0,\tau)
}
\right\}^2
+
\left\{
\frac
{
\theta^{\prime} 
\left[
\begin{array}{c}
0 \\
\frac34
\end{array}
\right](0,\tau) 
}
{
\theta
\left[
\begin{array}{c}
0 \\
\frac34
\end{array}
\right](0,\tau)
}
\right\}^2
\right]=0. 
\end{equation*}
}
\end{theorem}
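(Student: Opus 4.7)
The plan is to mirror the proof of Theorem \ref{thm:1-1/4,3/4} step for step. First, I would substitute the derivative formulas (\ref{eqn:analogue-0-1/4}) and (\ref{eqn:analogue-0-3/4}) into the two ratios; each takes the shape $-\pi\theta\bigl[\begin{smallmatrix}1\\0\end{smallmatrix}\bigr](0,4\tau)\bigl(\sqrt{2}\theta\bigl[\begin{smallmatrix}0\\0\end{smallmatrix}\bigr](0,2\tau) \pm \theta\bigl[\begin{smallmatrix}1\\0\end{smallmatrix}\bigr](0,4\tau)\bigr)$, so the elementary identity $(a-b)^{2}+(a+b)^{2}=2(a^{2}+b^{2})$ collapses the bracketed sum to
\[
4\pi^{2}\theta^{2}\bigl[\begin{smallmatrix}1\\0\end{smallmatrix}\bigr](0,4\tau)\,\theta^{2}\bigl[\begin{smallmatrix}0\\0\end{smallmatrix}\bigr](0,2\tau) + 2\pi^{2}\theta^{4}\bigl[\begin{smallmatrix}1\\0\end{smallmatrix}\bigr](0,4\tau).
\]

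Next, I would apply Lemma \ref{lem:Farkas-Kra} at $2\tau$ to obtain $\theta^{2}\bigl[\begin{smallmatrix}0\\0\end{smallmatrix}\bigr](0,2\tau) = \theta^{2}\bigl[\begin{smallmatrix}0\\0\end{smallmatrix}\bigr](0,4\tau) + \theta^{2}\bigl[\begin{smallmatrix}1\\0\end{smallmatrix}\bigr](0,4\tau)$ and $\theta^{4}\bigl[\begin{smallmatrix}1\\0\end{smallmatrix}\bigr](0,2\tau) = 4\theta^{2}\bigl[\begin{smallmatrix}0\\0\end{smallmatrix}\bigr](0,4\tau)\theta^{2}\bigl[\begin{smallmatrix}1\\0\end{smallmatrix}\bigr](0,4\tau)$, both already used in the proof of Theorem \ref{thm:1-1/4,3/4}. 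The first identity expands the cross product into a genuine $\theta^{2}\theta^{2}$ term at $4\tau$ plus an extra $\theta^{4}\bigl[\begin{smallmatrix}1\\0\end{smallmatrix}\bigr](0,4\tau)$, and the second recognises the surviving cross piece as $\tfrac14\theta^{4}\bigl[\begin{smallmatrix}1\\0\end{smallmatrix}\bigr](0,2\tau)$. After collection, the bracket becomes $\pi^{2}\theta^{4}\bigl[\begin{smallmatrix}1\\0\end{smallmatrix}\bigr](0,2\tau) + 6\pi^{2}\theta^{4}\bigl[\begin{smallmatrix}1\\0\end{smallmatrix}\bigr](0,4\tau)$.

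Finally, the proof of Theorem \ref{thm:4-triangular} supplies $\pi^{2}\theta^{4}\bigl[\begin{smallmatrix}1\\0\end{smallmatrix}\bigr](0,2\tau) = 4\pi i\,\tfrac{d}{d\tau}\log\tfrac{\eta^{2}(\tau)\eta(4\tau)}{\eta^{3}(2\tau)}$, and substituting $\tau\mapsto 2\tau$ (with the chain-rule factor $\tfrac12$) gives $\pi^{2}\theta^{4}\bigl[\begin{smallmatrix}1\\0\end{smallmatrix}\bigr](0,4\tau) = 2\pi i\,\tfrac{d}{d\tau}\log\tfrac{\eta^{2}(2\tau)\eta(8\tau)}{\eta^{3}(4\tau)}$. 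Combining these with weights $1$ and $6$ and regrouping the $\eta$ factors should yield $4\pi i\,\tfrac{d}{d\tau}\log\tfrac{\eta^{2}(\tau)\eta^{3}(2\tau)\eta^{3}(8\tau)}{\eta^{8}(4\tau)}$, which is the negative of $4\pi i\,\tfrac{d}{d\tau}\log\tfrac{\eta^{8}(4\tau)}{\eta^{2}(\tau)\eta^{3}(2\tau)\eta^{3}(8\tau)}$; division by $4\pi i$ then delivers the stated identity.

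The only genuine obstacle is the $\eta$-exponent bookkeeping in this last step: the substitutions are routine, but one must verify that combining the two logarithmic derivatives with weights $1$ and $6$ produces precisely the exponents $(2,3,8,3)$ on $(\eta(\tau),\eta(2\tau),\eta(4\tau),\eta(8\tau))$ demanded by the theorem.
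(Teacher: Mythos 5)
Your proposal is correct and follows essentially the same route as the paper: substitute the derivative formulas (\ref{eqn:analogue-0-1/4}) and (\ref{eqn:analogue-0-3/4}), use Lemma \ref{lem:Farkas-Kra} (applied at $2\tau$) to reduce the bracket to $\pi^{2}\theta^{4}\bigl[\begin{smallmatrix}1\\0\end{smallmatrix}\bigr](0,2\tau)+6\pi^{2}\theta^{4}\bigl[\begin{smallmatrix}1\\0\end{smallmatrix}\bigr](0,4\tau)$, and then invoke the identity from the proof of Theorem \ref{thm:4-triangular} at $\tau$ and $2\tau$. The final $\eta$-exponent bookkeeping you flag does check out: combining the two logarithmic derivatives with weights $1$ and $6$ gives exponents $(2,\,{-3}+6,\,1-9,\,3)=(2,3,-8,3)$, i.e.\ exactly $4\pi i\,\tfrac{d}{d\tau}\log\tfrac{\eta^{2}(\tau)\eta^{3}(2\tau)\eta^{3}(8\tau)}{\eta^{8}(4\tau)}$ as required.
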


\begin{proof}
By Lemma \ref{lem:Farkas-Kra}, we first note that
\begin{equation*}
\theta^2
\left[
\begin{array}{c}
0 \\
0
\end{array}
\right](0,\tau)
=
\theta^2
\left[
\begin{array}{c}
0 \\
0
\end{array}
\right](0,2\tau)
+
\theta^2
\left[
\begin{array}{c}
1\\
0
\end{array}
\right](0,2\tau),
\end{equation*}
and 
\begin{equation*}
\theta^2
\left[
\begin{array}{c}
1 \\
0
\end{array}
\right](0,\tau)
=
2
\theta
\left[
\begin{array}{c}
0 \\
0
\end{array}
\right](0,2\tau)
\theta
\left[
\begin{array}{c}
1\\
0
\end{array}
\right](0,2\tau),
\end{equation*}
which implies that 
\begin{equation*}
\theta^4
\left[
\begin{array}{c}
1 \\
0
\end{array}
\right](0,\tau)
=
4
\theta^2
\left[
\begin{array}{c}
0 \\
0
\end{array}
\right](0,2\tau)
\theta^2
\left[
\begin{array}{c}
1\\
0
\end{array}
\right](0,2\tau).
\end{equation*}
The derivative formulas (\ref{eqn:analogue-0-1/4}) and (\ref{eqn:analogue-0-3/4}) yield 
\begin{equation*}
\left\{
\frac
{
\theta^{\prime} 
\left[
\begin{array}{c}
0 \\
\frac14
\end{array}
\right](0,\tau) 
}
{
\theta
\left[
\begin{array}{c}
0 \\
\frac14
\end{array}
\right](0,\tau)
}
\right\}^2
+
\left\{
\frac
{
\theta^{\prime} 
\left[
\begin{array}{c}
0 \\
\frac34
\end{array}
\right](0,\tau) 
}
{
\theta
\left[
\begin{array}{c}
0 \\
\frac34
\end{array}
\right](0,\tau)
}
\right\}^2
=
\pi^2
\theta^4
\left[
\begin{array}{c}
1 \\
0
\end{array}
\right](0,2\tau)
+
6
\pi^2
\theta^4
\left[
\begin{array}{c}
1 \\
0
\end{array}
\right](0,4\tau). 
\end{equation*}
By the proof of Theorem \ref{thm:4-triangular}, we have 
\begin{align*}
\pi^2
\theta^4
\left[
\begin{array}{c}
1 \\
0
\end{array}
\right](0,2\tau)
+
6
\pi^2
\theta^4
\left[
\begin{array}{c}
1 \\
0
\end{array}
\right](0,4\tau)
=&
-
4\pi i
\frac{d}{d\tau}
\log \frac{\eta^3(2\tau)}{\eta^2(\tau) \eta(4\tau)}
-
6\cdot
4\pi i
\cdot\frac12
\cdot
\frac{d}{d\tau}
\log \frac{\eta^3(4\tau)}{\eta^2(2\tau) \eta(8\tau)}  \\
=&
4\pi i 
\frac{d}{d\tau}
\log
\frac{\eta^2(\tau)\eta^3(2\tau) \eta^3(8\tau)}{\eta^8(4\tau)}, 
\end{align*}
which proves the theorem. 
\end{proof}

\subsection{Applications}
In this subsection, 
for each $m\in\mathbb{N},$ 
we set 
\begin{equation*}
\left(\frac{8}{m}\right) 
=
\begin{cases}
+1, \,\,&\text{if}  \,\,m\equiv \pm1  \,\,(\mathrm{mod} \, 8),  \\
-1, \,\,&\text{if}  \,\,m\equiv \pm3 \,\,(\mathrm{mod} \, 8),  \\
0, \,\,&\text{if}  \,\,m\equiv 0 \,\,(\mathrm{mod} \, 2).  \\
\end{cases}
\end{equation*}

\subsubsection{Preliminary results}

\begin{proposition}
\label{prop:preliminary-1-1/4,3/4}
{\it
For every $\tau\in\mathbb{H}^2,$ we have 
\begin{equation}
\label{eqn:relation-1-1/4,3/4(1)}
\frac
{
\theta^{\prime\prime}
\left[
\begin{array}{c}
1 \\
\frac12
\end{array}
\right]
}
{
\theta
\left[
\begin{array}{c}
1 \\
\frac12
\end{array}
\right]
}
+
2
\frac
{
\theta^{\prime\prime}
\left[
\begin{array}{c}
1 \\
\frac14
\end{array}
\right]
}
{
\theta
\left[
\begin{array}{c}
1 \\
\frac14
\end{array}
\right]
}
-
\frac
{
\theta^{\prime\prime \prime}
\left[
\begin{array}{c}
1 \\
1
\end{array}
\right]
}
{
\theta^{\prime}
\left[
\begin{array}{c}
1 \\
1
\end{array}
\right]
}
+
4
\frac
{
\theta^{\prime} 
\left[
\begin{array}{c}
1 \\
\frac12
\end{array}
\right]
}
{
\theta
\left[
\begin{array}{c}
1 \\
\frac12
\end{array}
\right]
}
\cdot
\frac
{
\theta^{\prime} 
\left[
\begin{array}{c}
1 \\
\frac14
\end{array}
\right]
}
{
\theta
\left[
\begin{array}{c}
1 \\
\frac14
\end{array}
\right]
}
+
2
\left\{
\frac
{
\theta^{\prime} 
\left[
\begin{array}{c}
1 \\
\frac14
\end{array}
\right]
}
{
\theta
\left[
\begin{array}{c}
1 \\
\frac14
\end{array}
\right]
}
\right\}^2=0,
\end{equation}
and 
\begin{equation}
\label{eqn:relation-1-1/4,3/4(2)}
\frac
{
\theta^{\prime\prime}
\left[
\begin{array}{c}
1 \\
\frac12
\end{array}
\right]
}
{
\theta
\left[
\begin{array}{c}
1 \\
\frac12
\end{array}
\right]
}
+
2
\frac
{
\theta^{\prime\prime}
\left[
\begin{array}{c}
1 \\
\frac34
\end{array}
\right]
}
{
\theta
\left[
\begin{array}{c}
1 \\
\frac34
\end{array}
\right]
}
-
\frac
{
\theta^{\prime\prime \prime}
\left[
\begin{array}{c}
1 \\
1
\end{array}
\right]
}
{
\theta^{\prime}
\left[
\begin{array}{c}
1 \\
1
\end{array}
\right]
}
-
4
\frac
{
\theta^{\prime} 
\left[
\begin{array}{c}
1 \\
\frac12
\end{array}
\right]
}
{
\theta
\left[
\begin{array}{c}
1 \\
\frac12
\end{array}
\right]
}
\cdot
\frac
{
\theta^{\prime} 
\left[
\begin{array}{c}
1 \\
\frac34
\end{array}
\right]
}
{
\theta
\left[
\begin{array}{c}
1 \\
\frac34
\end{array}
\right]
}
+
2
\left\{
\frac
{
\theta^{\prime} 
\left[
\begin{array}{c}
1 \\
\frac34
\end{array}
\right]
}
{
\theta
\left[
\begin{array}{c}
1 \\
\frac34
\end{array}
\right]
}
\right\}^2=0.
\end{equation}
}
\end{proposition}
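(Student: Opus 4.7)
The plan is to apply the residue technique used repeatedly in Sections 4 and 5: for each identity, construct an elliptic function whose only singularity in a fundamental parallelogram is a triple pole at $\zeta=0$, and then set its residue there equal to zero.

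For equation (\ref{eqn:relation-1-1/4,3/4(1)}), I would take
$$
\varphi(\zeta) =
\frac{
\theta\left[\begin{array}{c}1\\ \frac{1}{2}\end{array}\right](\zeta)\,
\theta^{2}\left[\begin{array}{c}1\\ \frac{1}{4}\end{array}\right](\zeta)
}{
\theta^{3}\left[\begin{array}{c}1\\ 1\end{array}\right](\zeta)
}.
$$
The characteristic sums across numerator and denominator, namely $1 + 2 - 3 = 0$ in the top slot and $\tfrac{1}{2} + 2\cdot\tfrac{1}{4} - 3 = -2 \in 2\mathbb{Z}$ in the bottom slot, make $\varphi$ genuinely doubly periodic via (\ref{eqn:integer-char}); the only pole in a fundamental parallelogram is the triple pole at $\zeta = 0$, so $\mathrm{Res}(\varphi, 0) = 0$. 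Expanding $\theta[1,1](\zeta) = \theta'[1,1]\,\zeta + \tfrac{1}{6}\theta'''[1,1]\,\zeta^{3} + O(\zeta^{5})$ together with Taylor series for the two non-vanishing numerator factors through order $\zeta^{2}$, multiplying out and reading off the coefficient of $\zeta^{-1}$, then clearing the common factor $\tfrac{1}{2}\,\theta[1,1/2]\,\theta^{2}[1,1/4]/(\theta'[1,1])^{3}$, yields exactly (\ref{eqn:relation-1-1/4,3/4(1)}).

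For equation (\ref{eqn:relation-1-1/4,3/4(2)}), I would replace $\theta^{2}[1,1/4]$ by $\theta^{2}[1,-3/4]$, noting that $\tfrac{1}{2} + 2\cdot(-\tfrac{3}{4}) - 3 = -4 \in 2\mathbb{Z}$ keeps the function elliptic. The reflection identity $\theta[-\epsilon, -\epsilon'](\zeta) = \theta[\epsilon, \epsilon'](-\zeta)$ from Section 2.1, combined with the shift formula $\theta[\epsilon + 2m, \epsilon'+2n] = e^{\pi i \epsilon n}\,\theta[\epsilon, \epsilon']$ applied with $m = 1$, $n = 0$ to give $\theta[-1, 3/4] = \theta[1, 3/4]$, yields $\theta^{(k)}[1,-3/4](0) = (-1)^{k}\,\theta^{(k)}[1,3/4](0)$. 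Thus the values and second derivatives of $\theta[1,\pm 3/4]$ at $\zeta = 0$ coincide, while their first derivatives differ by a sign. Repeating the Laurent computation with this single sign flip flips only the cross-term coefficient $+4$ into $-4$, producing (\ref{eqn:relation-1-1/4,3/4(2)}); every other term transfers unchanged because it involves either $\theta[1,\pm 3/4]$ or its second derivative at zero.

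The main obstacle is purely bookkeeping: one must carefully multiply three Taylor expansions out to order $\zeta^{2}$, include the $-\tfrac{1}{2}\,\theta'''[1,1]/\theta'[1,1]\cdot\zeta^{2}$ correction arising when inverting $\theta^{3}[1,1]/\zeta^{3}$, and verify that the resulting coefficient of $\zeta^{-1}$ matches every sign and numerical coefficient in the displayed identities.
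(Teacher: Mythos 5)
Your proposal is correct and follows essentially the same route as the paper: the paper also forms the elliptic functions $\theta^{2}\!\left[\begin{smallmatrix}1\\ \frac14\end{smallmatrix}\right](z)\,\theta\!\left[\begin{smallmatrix}1\\ \frac12\end{smallmatrix}\right](z)\,\theta^{-3}\!\left[\begin{smallmatrix}1\\ 1\end{smallmatrix}\right](z)$ and $\theta^{2}\!\left[\begin{smallmatrix}1\\ \frac34\end{smallmatrix}\right](z)\,\theta\!\left[\begin{smallmatrix}1\\ -\frac12\end{smallmatrix}\right](z)\,\theta^{-3}\!\left[\begin{smallmatrix}1\\ 1\end{smallmatrix}\right](z)$ and extracts both identities from the vanishing of the residue at the unique (triple) pole $z=0$. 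Your second function, with $\theta^{2}\!\left[\begin{smallmatrix}1\\ -\frac34\end{smallmatrix}\right]$ and $\theta\!\left[\begin{smallmatrix}1\\ \frac12\end{smallmatrix}\right]$, is just the paper's second function composed with $z\mapsto -z$ (up to sign), so the sign bookkeeping you do via $\theta^{(k)}\!\left[\begin{smallmatrix}1\\ -\frac34\end{smallmatrix}\right](0)=(-1)^{k}\theta^{(k)}\!\left[\begin{smallmatrix}1\\ \frac34\end{smallmatrix}\right](0)$ reproduces the same relation (\ref{eqn:relation-1-1/4,3/4(2)}).
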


\begin{proof}
Consider the following elliptic functions:
\begin{equation*}
\varphi(z)
=
\frac
{
\theta^2
\left[
\begin{array}{c}
1\\
\frac14
\end{array}
\right](z)
\theta
\left[
\begin{array}{c}
1 \\
\frac12
\end{array}
\right](z) 
}
{
\theta^3
\left[
\begin{array}{c}
1 \\
1
\end{array}
\right](z)
}, \,\,
\psi(z)
=
\frac
{
\theta^2
\left[
\begin{array}{c}
1 \\
\frac34
\end{array}
\right](z)
\theta
\left[
\begin{array}{c}
1 \\
-\frac12
\end{array}
\right](z) 
}
{
\theta^3
\left[
\begin{array}{c}
1 \\
1
\end{array}
\right](z)
}. 
\end{equation*}
In the fundamental parallelogram, 
the pole of $\varphi(z)$ or $\psi(z)$ is $z=0,$ 
which implies that 
$\mathrm{Res} (\varphi(z), 0)=\mathrm{Res} (\psi(z), 0)=0.$ 
The proposition follows from direct calculation. 
\end{proof}

\begin{proposition}
\label{prop:preliminary-0-1/4,3/4}
{\it
For every $\tau\in\mathbb{H}^2,$ we have 
\begin{equation}
\label{eqn:relation-0-1/4,3/4(1)}
\frac
{
\theta^{\prime\prime}
\left[
\begin{array}{c}
1 \\
\frac12
\end{array}
\right]
}
{
\theta
\left[
\begin{array}{c}
1 \\
\frac12
\end{array}
\right]
}
+
2
\frac
{
\theta^{\prime\prime}
\left[
\begin{array}{c}
0 \\
\frac14
\end{array}
\right]
}
{
\theta
\left[
\begin{array}{c}
0 \\
\frac14
\end{array}
\right]
}
-
\frac
{
\theta^{\prime\prime \prime}
\left[
\begin{array}{c}
1 \\
1
\end{array}
\right]
}
{
\theta^{\prime}
\left[
\begin{array}{c}
1 \\
1
\end{array}
\right]
}
+
4
\frac
{
\theta^{\prime} 
\left[
\begin{array}{c}
1 \\
\frac12
\end{array}
\right]
}
{
\theta
\left[
\begin{array}{c}
1 \\
\frac12
\end{array}
\right]
}
\cdot
\frac
{
\theta^{\prime} 
\left[
\begin{array}{c}
0 \\
\frac14
\end{array}
\right]
}
{
\theta
\left[
\begin{array}{c}
0 \\
\frac14
\end{array}
\right]
}
+
2
\left\{
\frac
{
\theta^{\prime} 
\left[
\begin{array}{c}
0 \\
\frac14
\end{array}
\right]
}
{
\theta
\left[
\begin{array}{c}
0 \\
\frac14
\end{array}
\right]
}
\right\}^2=0,
\end{equation}
and 
\begin{equation}
\label{eqn:relation-0-1/4,3/4(2)}
\frac
{
\theta^{\prime\prime}
\left[
\begin{array}{c}
1 \\
\frac12
\end{array}
\right]
}
{
\theta
\left[
\begin{array}{c}
1 \\
\frac12
\end{array}
\right]
}
+
2
\frac
{
\theta^{\prime\prime}
\left[
\begin{array}{c}
0 \\
\frac34
\end{array}
\right]
}
{
\theta
\left[
\begin{array}{c}
0 \\
\frac34
\end{array}
\right]
}
-
\frac
{
\theta^{\prime\prime \prime}
\left[
\begin{array}{c}
1 \\
1
\end{array}
\right]
}
{
\theta^{\prime}
\left[
\begin{array}{c}
1 \\
1
\end{array}
\right]
}
-
4
\frac
{
\theta^{\prime} 
\left[
\begin{array}{c}
1 \\
\frac12
\end{array}
\right]
}
{
\theta
\left[
\begin{array}{c}
1 \\
\frac12
\end{array}
\right]
}
\cdot
\frac
{
\theta^{\prime} 
\left[
\begin{array}{c}
0 \\
\frac34
\end{array}
\right]
}
{
\theta
\left[
\begin{array}{c}
0 \\
\frac34
\end{array}
\right]
}
+
2
\left\{
\frac
{
\theta^{\prime} 
\left[
\begin{array}{c}
0 \\
\frac34
\end{array}
\right]
}
{
\theta
\left[
\begin{array}{c}
0 \\
\frac34
\end{array}
\right]
}
\right\}^2=0.
\end{equation}
}
\end{proposition}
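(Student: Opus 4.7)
The plan is to adapt the strategy used in Proposition \ref{prop:preliminary-1-1/4,3/4}, replacing the characteristics $[1, 1/4]$ and $[1, 3/4]$ in the numerators of the relevant elliptic functions by $[0, 1/4]$ and $[0, 3/4]$ respectively. Concretely, I consider the two candidate elliptic functions
\begin{equation*}
\varphi(z) = \frac{\theta^2\!\left[\begin{array}{c} 0 \\ \frac{1}{4} \end{array}\right]\!(z)\, \theta\!\left[\begin{array}{c} 1 \\ \frac{1}{2} \end{array}\right]\!(z)}{\theta^3\!\left[\begin{array}{c} 1 \\ 1 \end{array}\right]\!(z)}, \quad \psi(z) = \frac{\theta^2\!\left[\begin{array}{c} 0 \\ \frac{3}{4} \end{array}\right]\!(z)\, \theta\!\left[\begin{array}{c} 1 \\ -\frac{1}{2} \end{array}\right]\!(z)}{\theta^3\!\left[\begin{array}{c} 1 \\ 1 \end{array}\right]\!(z)}.
\end{equation*}

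The first step is to verify double periodicity via (\ref{eqn:integer-char}). Under $z \mapsto z+1$, each factor with $\epsilon = 1$ contributes $-1$, so the numerator and denominator each acquire $-1$; under $z \mapsto z+\tau$, the $\zeta$- and $\tau^2$-dependent factors cancel (three thetas top and bottom), and the residual $\epsilon'$-dependent phases multiply to $\exp(2\pi i)$ for each of $\varphi$ and $\psi$. Using the zero location $\zeta = (1-\epsilon)\tau/2 + (1-\epsilon')/2$ from Section \ref{sec:properties}, each numerator factor vanishes only at points different from $z = 0$, while $\theta^3[1, 1]$ has a triple zero at the origin. Thus $\varphi$ and $\psi$ each have a single triple pole at $z = 0$ in the fundamental parallelogram, forcing $\mathrm{Res}(\varphi, 0) = \mathrm{Res}(\psi, 0) = 0$.

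To extract this residue, I write $\theta[1, 1](z) = z F(z)$ with $F(z) = \theta'[1, 1] + \theta'''[1, 1] z^2/6 + O(z^4)$, so that $h(z) := z^3 \varphi(z)$ is analytic at $0$ and $\mathrm{Res}(\varphi, 0) = h''(0)/2$. Logarithmic differentiation of $h$ yields
\begin{equation*}
\frac{h''(0)}{h(0)} = \left(\frac{h'(0)}{h(0)}\right)^2 + 2\!\left(\frac{\theta''[0,1/4]}{\theta[0,1/4]} - \left(\frac{\theta'[0,1/4]}{\theta[0,1/4]}\right)^2\right) + \frac{\theta''[1,1/2]}{\theta[1,1/2]} - \left(\frac{\theta'[1,1/2]}{\theta[1,1/2]}\right)^2 - \frac{\theta'''[1,1]}{\theta'[1,1]},
\end{equation*}
and expanding $(h'(0)/h(0))^2 = (2\theta'[0,1/4]/\theta[0,1/4] + \theta'[1,1/2]/\theta[1,1/2])^2$ produces, after cancellation, the cross-term $+4(\theta'[0,1/4]/\theta[0,1/4])(\theta'[1,1/2]/\theta[1,1/2])$ and the remaining piece $2(\theta'[0,1/4]/\theta[0,1/4])^2$. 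Setting $h''(0) = 0$ gives exactly (\ref{eqn:relation-0-1/4,3/4(1)}). The same argument applied to $\psi$, together with the identity $\theta'[1, -1/2] = -\theta'[1, 1/2]$ (a consequence of $\theta[-\epsilon, -\epsilon'](\zeta) = \theta[\epsilon, \epsilon'](-\zeta)$ combined with (\ref{eqn:integer-char})), flips the sign of the cross-term and yields (\ref{eqn:relation-0-1/4,3/4(2)}).

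The main obstacle is bookkeeping: correctly tracking the coefficient and sign $\pm 4$ of the cross-term when expanding the squared first log-derivative, and verifying that the only source of the sign change between (\ref{eqn:relation-0-1/4,3/4(1)}) and (\ref{eqn:relation-0-1/4,3/4(2)}) is the identity $\theta'[1, -1/2] = -\theta'[1, 1/2]$. Once the residue identity is computed correctly for $\varphi$, the case of $\psi$ reduces essentially to this one sign flip.
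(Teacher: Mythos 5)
Your proposal is correct and follows exactly the paper's own route: the paper's proof considers precisely the same two elliptic functions $\varphi$ and $\psi$ (with numerators $\theta^{2}\left[\begin{smallmatrix}0\\ 1/4\end{smallmatrix}\right](z)\,\theta\left[\begin{smallmatrix}1\\ 1/2\end{smallmatrix}\right](z)$ and $\theta^{2}\left[\begin{smallmatrix}0\\ 3/4\end{smallmatrix}\right](z)\,\theta\left[\begin{smallmatrix}1\\ -1/2\end{smallmatrix}\right](z)$ over $\theta^{3}\left[\begin{smallmatrix}1\\ 1\end{smallmatrix}\right](z)$), notes the vanishing of the residue at the unique (triple) pole $z=0$, and leaves the rest to "direct calculation," which your Laurent-expansion bookkeeping carries out correctly. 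The only nitpick is that the sign flip $\theta^{\prime}\left[\begin{smallmatrix}1\\ -1/2\end{smallmatrix}\right]=-\theta^{\prime}\left[\begin{smallmatrix}1\\ 1/2\end{smallmatrix}\right]$ at $\zeta=0$ follows from the reflection identity together with the characteristic-shift-by-$2$ formula (the unlabeled equation (2.2)), not from (\ref{eqn:integer-char}).
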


\begin{proof}
Consider the following elliptic functions:
\begin{equation*}
\varphi(z)
=
\frac
{
\theta^2
\left[
\begin{array}{c}
0\\
\frac14
\end{array}
\right](z)
\theta
\left[
\begin{array}{c}
1 \\
\frac12
\end{array}
\right](z) 
}
{
\theta^3
\left[
\begin{array}{c}
1 \\
1
\end{array}
\right](z)
}, \,\,
\psi(z)
=
\frac
{
\theta^2
\left[
\begin{array}{c}
0 \\
\frac34
\end{array}
\right](z)
\theta
\left[
\begin{array}{c}
1 \\
-\frac12
\end{array}
\right](z) 
}
{
\theta^3
\left[
\begin{array}{c}
1 \\
1
\end{array}
\right](z)
}. 
\end{equation*}
In the fundamental parallelogram, 
the pole of $\varphi(z)$ or $\psi(z)$ is $z=0,$ 
which implies that 
$\mathrm{Res} (\varphi(z), 0)=\mathrm{Res} (\psi(z), 0)=0.$ 
The proposition follows from direct calculation. 
\end{proof}

\subsubsection{On $x^2+y^2+2z^2+2w^2$}

\begin{theorem}
\label{thm:1122}
{\it
For each $n\in\mathbb{N},$ set 
\begin{equation*}
S_{1,1,2,2}(n)=
\sharp
\left\{
(x,y,z,w)\in\mathbb{Z}^4 \, | \,
x^2+y^2+2z^2+2w^2=n
\right\}. 
\end{equation*}
Then, we have 
\begin{equation*}
S_{1,1,2,2}(n)
=
4\sigma(n)-4\sigma(n/2)+8\sigma(n/4)-32\sigma(n/8). 
\end{equation*}
}
\end{theorem}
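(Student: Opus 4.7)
The plan is to write the generating series of $S_{1,1,2,2}(n)$ as a product of theta constants, decompose that product by the Farkas--Kra lemma into pieces whose coefficients were already computed in Section 4, and then add term by term. With $q=\exp(2\pi i\tau)$, the expansions $\theta\!\left[\begin{array}{c}0\\0\end{array}\right](0,2\tau)=\sum_{m\in\mathbb{Z}}q^{m^{2}}$ and $\theta\!\left[\begin{array}{c}0\\0\end{array}\right](0,4\tau)=\sum_{m\in\mathbb{Z}}q^{2m^{2}}$ (from Jacobi's triple product identity) give
\[
1+\sum_{n\ge 1}S_{1,1,2,2}(n)\,q^{n}
=\theta^{2}\!\left[\begin{array}{c}0\\0\end{array}\right](0,2\tau)\,\theta^{2}\!\left[\begin{array}{c}0\\0\end{array}\right](0,4\tau).
\]

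The key step is to apply Lemma \ref{lem:Farkas-Kra} at the point $2\tau$, in the spirit of the proof of Theorem \ref{thm:1-1/4,3/4}, in order to obtain the two doubling relations
\[
\theta^{2}\!\left[\begin{array}{c}0\\0\end{array}\right](0,2\tau)
=\theta^{2}\!\left[\begin{array}{c}0\\0\end{array}\right](0,4\tau)+\theta^{2}\!\left[\begin{array}{c}1\\0\end{array}\right](0,4\tau),
\]
\[
\theta^{4}\!\left[\begin{array}{c}1\\0\end{array}\right](0,2\tau)
=4\,\theta^{2}\!\left[\begin{array}{c}0\\0\end{array}\right](0,4\tau)\,\theta^{2}\!\left[\begin{array}{c}1\\0\end{array}\right](0,4\tau).
\]
Multiplying the first identity by $\theta^{2}\!\left[\begin{array}{c}0\\0\end{array}\right](0,4\tau)$ and using the second to reabsorb the cross term collapses the generating function to the two-term expression
\[
\theta^{2}\!\left[\begin{array}{c}0\\0\end{array}\right](0,2\tau)\,\theta^{2}\!\left[\begin{array}{c}0\\0\end{array}\right](0,4\tau)
=\theta^{4}\!\left[\begin{array}{c}0\\0\end{array}\right](0,4\tau)+\tfrac{1}{4}\,\theta^{4}\!\left[\begin{array}{c}1\\0\end{array}\right](0,2\tau),
\]
which is the heart of the argument.

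Finally, each summand on the right is already tabulated: Theorem \ref{thm:4-squares} with $\tau$ replaced by $2\tau$ gives $\theta^{4}\!\left[\begin{array}{c}0\\0\end{array}\right](0,4\tau)=1+8\sum_{n\ge 1}(\sigma(n/2)-4\sigma(n/8))q^{n}$, and the series computation carried out inside the proof of Theorem \ref{thm:4-triangular} gives $\theta^{4}\!\left[\begin{array}{c}1\\0\end{array}\right](0,2\tau)=16\sum_{n\ge 1}(\sigma(n)-3\sigma(n/2)+2\sigma(n/4))q^{n}$. Adding these with the factor $\tfrac14$ in front of the second collects the coefficient of $q^{n}$ to $4\sigma(n)-4\sigma(n/2)+8\sigma(n/4)-32\sigma(n/8)$. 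The only non-routine point is the algebraic collapse in the middle step: one must recognise that the cross term produced by Farkas--Kra is exactly a quarter of $\theta^{4}\!\left[\begin{array}{c}1\\0\end{array}\right](0,2\tau)$, so that the right-hand side reduces from a three-piece expression to the two-piece combination whose expansion coincides term-by-term with the two $\Gamma_{o}(4)$ identities already proved.
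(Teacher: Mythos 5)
Your proof is correct, and it takes a genuinely different route from the paper. The paper argues analytically: it sums the two residue identities (\ref{eqn:relation-1-1/4,3/4(1)}) and (\ref{eqn:relation-1-1/4,3/4(2)}) of Proposition \ref{prop:preliminary-1-1/4,3/4}, eliminates the second-order terms via the derivative formulas (\ref{eqn:analogue-1-1/2}), (\ref{eqn:analogue-1-1/4}), (\ref{eqn:analogue-1-3/4}) and the heat equation (\ref{eqn:heat}), and then invokes Theorem \ref{thm:1-1/4,3/4} to reach the identity (\ref{eqn:1122-log-derivative}),
$4\pi i\,\frac{d}{d\tau}\log\frac{\eta^{2}(\tau)\eta(4\tau)}{\eta(2\tau)\eta^{2}(8\tau)}
=4\pi^{2}\,\theta^{2}\!\left[\begin{smallmatrix}0\\0\end{smallmatrix}\right](0,2\tau)\,
\theta^{2}\!\left[\begin{smallmatrix}0\\0\end{smallmatrix}\right](0,4\tau)$,
from which the divisor sums are read off the eta quotient. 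You instead stay entirely at the level of $q$-series: the two duplication identities from Lemma \ref{lem:Farkas-Kra} (the same ones recorded at the start of the proof of Theorem \ref{thm:1-1/4,3/4}, applied at $2\tau$) yield
$\theta^{2}\!\left[\begin{smallmatrix}0\\0\end{smallmatrix}\right](0,2\tau)\,
\theta^{2}\!\left[\begin{smallmatrix}0\\0\end{smallmatrix}\right](0,4\tau)
=\theta^{4}\!\left[\begin{smallmatrix}0\\0\end{smallmatrix}\right](0,4\tau)
+\tfrac14\,\theta^{4}\!\left[\begin{smallmatrix}1\\0\end{smallmatrix}\right](0,2\tau)$,
and the two summands are exactly the generating functions of Theorems \ref{thm:4-squares} (with $q$ replaced by $q^{2}$) and \ref{thm:4-triangular}; the final bookkeeping
$8\sigma(n/2)-32\sigma(n/8)+4\sigma(n)-12\sigma(n/2)+8\sigma(n/4)
=4\sigma(n)-4\sigma(n/2)+8\sigma(n/4)-32\sigma(n/8)$ is correct. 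Your route is shorter and more elementary — no elliptic-function residues, heat equation, derivative formulas, or Theorem \ref{thm:1-1/4,3/4} are needed — and it makes transparent that $S_{1,1,2,2}$ is a linear combination of the $4$-squares and $4$-triangular counts already established for $\Gamma_{o}(4)$. What the paper's longer derivation buys is the intermediate identity (\ref{eqn:1122-log-derivative}) itself, which is not a by-product of your argument but is reused later, in the proof of Theorem \ref{thm:x^2+y^2+z^2+2w^2}; so if one adopted your proof, that identity would still have to be established separately for the later applications.
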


\begin{proof}
Set $q=\exp(2\pi i \tau).$ 
Summing both sides of equations (\ref{eqn:relation-1-1/4,3/4(1)}) and (\ref{eqn:relation-1-1/4,3/4(2)}) yields 
\begin{align*}
&
\frac
{
\theta^{\prime\prime}
\left[
\begin{array}{c}
1 \\
\frac12
\end{array}
\right]
}
{
\theta
\left[
\begin{array}{c}
1 \\
\frac12
\end{array}
\right]
}
+
\frac
{
\theta^{\prime\prime}
\left[
\begin{array}{c}
1 \\
\frac14
\end{array}
\right]
}
{
\theta
\left[
\begin{array}{c}
1 \\
\frac14
\end{array}
\right]
}
+
\frac
{
\theta^{\prime\prime}
\left[
\begin{array}{c}
1 \\
\frac34
\end{array}
\right]
}
{
\theta
\left[
\begin{array}{c}
1 \\
\frac34
\end{array}
\right]
}
-
\frac
{
\theta^{\prime\prime \prime}
\left[
\begin{array}{c}
1 \\
1
\end{array}
\right]
}
{
\theta^{\prime}
\left[
\begin{array}{c}
1 \\
1
\end{array}
\right]
}  \\
&
+
2
\frac
{
\theta^{\prime} 
\left[
\begin{array}{c}
1 \\
\frac12
\end{array}
\right]
}
{
\theta
\left[
\begin{array}{c}
1 \\
\frac12
\end{array}
\right]
}
\cdot
\left\{
\frac
{
\theta^{\prime} 
\left[
\begin{array}{c}
1 \\
\frac14
\end{array}
\right]
}
{
\theta
\left[
\begin{array}{c}
1 \\
\frac14
\end{array}
\right]
}
-
\frac
{
\theta^{\prime} 
\left[
\begin{array}{c}
1 \\
\frac34
\end{array}
\right]
}
{
\theta
\left[
\begin{array}{c}
1 \\
\frac34
\end{array}
\right]
}
\right\}
+
\left[
\left\{
\frac
{
\theta^{\prime} 
\left[
\begin{array}{c}
1 \\
\frac14
\end{array}
\right]
}
{
\theta
\left[
\begin{array}{c}
1 \\
\frac14
\end{array}
\right]
}
\right\}^2
+
\left\{
\frac
{
\theta^{\prime} 
\left[
\begin{array}{c}
1 \\
\frac34
\end{array}
\right]
}
{
\theta
\left[
\begin{array}{c}
1 \\
\frac34
\end{array}
\right]
}
\right\}^2
\right]=0.
\end{align*}
The derivative formulas (\ref{eqn:analogue-1-1/2}), (\ref{eqn:analogue-1-1/4}) and (\ref{eqn:analogue-1-3/4}) imply that 
\begin{align*}
&
\frac
{
\theta^{\prime\prime}
\left[
\begin{array}{c}
1 \\
\frac12
\end{array}
\right]
}
{
\theta
\left[
\begin{array}{c}
1 \\
\frac12
\end{array}
\right]
}
+
\frac
{
\theta^{\prime\prime}
\left[
\begin{array}{c}
1 \\
\frac14
\end{array}
\right]
}
{
\theta
\left[
\begin{array}{c}
1 \\
\frac14
\end{array}
\right]
}
+
\frac
{
\theta^{\prime\prime}
\left[
\begin{array}{c}
1 \\
\frac34
\end{array}
\right]
}
{
\theta
\left[
\begin{array}{c}
1 \\
\frac34
\end{array}
\right]
}
-
\frac
{
\theta^{\prime\prime \prime}
\left[
\begin{array}{c}
1 \\
1
\end{array}
\right]
}
{
\theta^{\prime}
\left[
\begin{array}{c}
1 \\
1
\end{array}
\right]
}
+
\left[
\left\{
\frac
{
\theta^{\prime} 
\left[
\begin{array}{c}
1 \\
\frac14
\end{array}
\right]
}
{
\theta
\left[
\begin{array}{c}
1 \\
\frac14
\end{array}
\right]
}
\right\}^2
+
\left\{
\frac
{
\theta^{\prime} 
\left[
\begin{array}{c}
1 \\
\frac34
\end{array}
\right]
}
{
\theta
\left[
\begin{array}{c}
1 \\
\frac34
\end{array}
\right]
}
\right\}^2
\right] \\
=&
4\pi^2
\theta^2
\left[
\begin{array}{c}
0 \\
0
\end{array}
\right](0,2\tau)
\theta^2
\left[
\begin{array}{c}
0 \\
0
\end{array}
\right](0,4\tau). 
\end{align*}
The heat equation (\ref{eqn:heat}) and Jacobi's triple product identity (\ref{eqn:Jacobi-triple}) shows that 
\begin{equation*}
4\pi i \frac{d}{d\tau} \log \frac{\eta(8\tau)}{\eta(2\tau)}+
\left[
\left\{
\frac
{
\theta^{\prime} 
\left[
\begin{array}{c}
1 \\
\frac14
\end{array}
\right]
}
{
\theta
\left[
\begin{array}{c}
1 \\
\frac14
\end{array}
\right]
}
\right\}^2
+
\left\{
\frac
{
\theta^{\prime} 
\left[
\begin{array}{c}
1 \\
\frac34
\end{array}
\right]
}
{
\theta
\left[
\begin{array}{c}
1 \\
\frac34
\end{array}
\right]
}
\right\}^2
\right] 
=
4\pi^2
\theta^2
\left[
\begin{array}{c}
0 \\
0
\end{array}
\right](0,2\tau)
\theta^2
\left[
\begin{array}{c}
0 \\
0
\end{array}
\right](0,4\tau). 
\end{equation*}
Theorem \ref{thm:1-1/4,3/4} yields 
\begin{equation}
\label{eqn:1122-log-derivative}
4\pi i \frac{d}{d\tau} \log 
\frac{\eta^2(\tau) \eta(4\tau)}{\eta(2\tau) \eta^2(8\tau)}
=
4\pi^2
\theta^2
\left[
\begin{array}{c}
0 \\
0
\end{array}
\right](0,2\tau)
\theta^2
\left[
\begin{array}{c}
0 \\
0
\end{array}
\right](0,4\tau). 
\end{equation}
The theorem can be obtained by considering that 
\begin{equation*}
\theta^2
\left[
\begin{array}{c}
0 \\
0
\end{array}
\right](0,2\tau)
\theta^2
\left[
\begin{array}{c}
0 \\
0
\end{array}
\right](0,4\tau)
=
\left(
\sum_{n\in\mathbb{Z}} q^{n^2}
\right)^2
\left(
\sum_{n\in\mathbb{Z}} q^{2n^2}
\right)^2
=
\sum_{x,y,z,w\in\mathbb{Z}}
q^{x^2+y^2+2z^2+2w^2}. 
\end{equation*}
\end{proof}

\subsubsection{On $x^2+2y^2+4t_z+4t_w$}

\begin{theorem}
\label{thm:x^2+2y^2+4t_z+4t_w}
{\it
For each $n\in\mathbb{N}_0,$ set 
\begin{equation*}
M_{1,2\textrm{-}4,4}(n):=
\sharp
\left\{
(x,y,z,w)\in\mathbb{Z}^4 \, | \,
x^2+2y^2+4t_z+4t_w=n
\right\}. 
\end{equation*}
Then, we have 
\begin{equation*}
M_{1,2\textrm{-}4,4}(n)=
4\sum_{d|n+1} \frac{n+1}{d} \left( \frac{8}{d} \right).
\end{equation*}
}
\end{theorem}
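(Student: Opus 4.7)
The plan is first to express the generating series for $M_{1,2\text{-}4,4}$ as a product of four theta constants, and then to identify that product with a Lambert series attached to the Kronecker character $(8/\cdot)$. Jacobi's triple product identity (\ref{eqn:Jacobi-triple}) yields $\sum_{x\in\mathbb{Z}} q^{x^{2}}=\theta\left[\begin{smallmatrix}0\\ 0\end{smallmatrix}\right](0,2\tau)$, $\sum_{y\in\mathbb{Z}} q^{2y^{2}}=\theta\left[\begin{smallmatrix}0\\ 0\end{smallmatrix}\right](0,4\tau)$, and $\sum_{z\in\mathbb{Z}} q^{4t_{z}}=q^{-1/2}\theta\left[\begin{smallmatrix}1\\ 0\end{smallmatrix}\right](0,4\tau)$, so multiplying the four factors gives
\begin{equation*}
\sum_{n\ge 0} M_{1,2\text{-}4,4}(n)\,q^{\,n+1}=\theta\!\left[\begin{array}{c}0\\0\end{array}\right]\!(0,2\tau)\,\theta\!\left[\begin{array}{c}0\\0\end{array}\right]\!(0,4\tau)\,\theta^{2}\!\left[\begin{array}{c}1\\0\end{array}\right]\!(0,4\tau),
\end{equation*}
and the theorem reduces to matching this product with $4\sum_{N\ge 1}\sigma^{*}_{1,\chi_{8}}(N)\,q^{N}$, where $\sigma^{*}_{1,\chi_{8}}(N):=\sum_{d\mid N}(N/d)(8/d)$.

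I would derive this match by subtracting equation (\ref{eqn:relation-1-1/4,3/4(2)}) from (\ref{eqn:relation-1-1/4,3/4(1)}) in Proposition \ref{prop:preliminary-1-1/4,3/4}: the terms involving the characteristics $\left[\begin{smallmatrix}1\\ 1/2\end{smallmatrix}\right]$ and $\left[\begin{smallmatrix}1\\ 1\end{smallmatrix}\right]$ cancel, and the heat equation (\ref{eqn:heat}) turns the two surviving second-derivative quotients into $8\pi i\,(d/d\tau)\log(\theta\left[\begin{smallmatrix}1\\ 1/4\end{smallmatrix}\right](0,\tau)/\theta\left[\begin{smallmatrix}1\\ 3/4\end{smallmatrix}\right](0,\tau))$. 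The derivative formulas (\ref{eqn:analogue-1-1/2}), (\ref{eqn:analogue-1-1/4}), (\ref{eqn:analogue-1-3/4}) evaluate the remaining first-derivative quotients in closed form, and after collection the first-order contribution becomes a common multiple of $\theta\left[\begin{smallmatrix}0\\ 0\end{smallmatrix}\right](0,2\tau)\,\theta\left[\begin{smallmatrix}0\\ 0\end{smallmatrix}\right](0,4\tau)\bigl(\theta^{2}\left[\begin{smallmatrix}0\\ 0\end{smallmatrix}\right](0,2\tau)-\theta^{2}\left[\begin{smallmatrix}0\\ 0\end{smallmatrix}\right](0,4\tau)\bigr)$. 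Lemma \ref{lem:Farkas-Kra} applied at $2\tau$, which reads $\theta^{2}\left[\begin{smallmatrix}0\\ 0\end{smallmatrix}\right](0,2\tau)-\theta^{2}\left[\begin{smallmatrix}0\\ 0\end{smallmatrix}\right](0,4\tau)=\theta^{2}\left[\begin{smallmatrix}1\\ 0\end{smallmatrix}\right](0,4\tau)$, converts the last bracket into the very theta product of interest and produces
\begin{equation*}
\theta\!\left[\begin{array}{c}0\\0\end{array}\right]\!(0,2\tau)\,\theta\!\left[\begin{array}{c}0\\0\end{array}\right]\!(0,4\tau)\,\theta^{2}\!\left[\begin{array}{c}1\\0\end{array}\right]\!(0,4\tau)=-\frac{i}{\sqrt{2}\,\pi}\,\frac{d}{d\tau}\log\frac{\theta\!\left[\begin{array}{c}1\\1/4\end{array}\right]\!(0,\tau)}{\theta\!\left[\begin{array}{c}1\\3/4\end{array}\right]\!(0,\tau)}.
\end{equation*}

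Finally I expand the logarithmic derivative on the right as a Lambert series. Jacobi's triple product (\ref{eqn:Jacobi-triple}) writes each of the two theta constants as an infinite product whose only $\epsilon'$-dependence lies in the four phase factors $e^{\pm\pi i/4}$, $e^{\pm 3\pi i/4}$. Expanding $e^{i\alpha}/(1+e^{i\alpha}q^{n})$ as a geometric series, the contribution of those four phases at step $m$ collapses to $2(\cos(m\pi/4)-\cos(3m\pi/4))$, which equals $2\sqrt{2}\,(8/m)$ for $m$ odd and vanishes for $m$ even. Regrouping the resulting double sum by $N=nm$ gives $(d/d\tau)\log(\theta\left[\begin{smallmatrix}1\\ 1/4\end{smallmatrix}\right]/\theta\left[\begin{smallmatrix}1\\ 3/4\end{smallmatrix}\right])=4\sqrt{2}\,\pi i\sum_{N\ge 1}\sigma^{*}_{1,\chi_{8}}(N)\,q^{N}$, and substitution into the identity above yields $M_{1,2\text{-}4,4}(n)=4\sigma^{*}_{1,\chi_{8}}(n+1)$, as desired. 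The principal technical obstacle lies in this last step: one must track the four primitive eighth-root-of-unity phases carefully enough to recognize $(8/\cdot)$, and in particular to confirm that its vanishing on even arguments is precisely mirrored by the phase cancellations at even $m$.
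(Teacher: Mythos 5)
Your argument is correct and follows essentially the same route as the paper: subtract the two residue relations of Proposition \ref{prop:preliminary-1-1/4,3/4}, evaluate the first-order terms with the derivative formulas (\ref{eqn:analogue-1-1/2}), (\ref{eqn:analogue-1-1/4}), (\ref{eqn:analogue-1-3/4}), apply Lemma \ref{lem:Farkas-Kra} at $2\tau$, and identify the resulting theta product with the generating function of $M_{1,2\textrm{-}4,4}$. The only difference is presentational: you pass through the heat equation to a $\tau$-logarithmic derivative and work out the Lambert-series expansion with the eighth-root-of-unity phases explicitly, a step the paper compresses into a single appeal to Jacobi's triple product identity, and your computation of $2(\cos(m\pi/4)-\cos(3m\pi/4))=2\sqrt{2}\,(8/m)$ checks out.
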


\begin{proof}
Set $q=\exp(2\pi i \tau).$ 
By Lemma \ref{lem:Farkas-Kra}, we first note that
\begin{equation*}
\theta^2
\left[
\begin{array}{c}
0 \\
0
\end{array}
\right](0,\tau)
=
\theta^2
\left[
\begin{array}{c}
0 \\
0
\end{array}
\right](0,2\tau)
+
\theta^2
\left[
\begin{array}{c}
1\\
0
\end{array}
\right](0,2\tau). 
\end{equation*}
Subtracting both sides of equations (\ref{eqn:relation-1-1/4,3/4(1)}) and (\ref{eqn:relation-1-1/4,3/4(2)}) yields 
\begin{equation*}
\frac
{
\theta^{\prime\prime}
\left[
\begin{array}{c}
1 \\
\frac14
\end{array}
\right]
}
{
\theta
\left[
\begin{array}{c}
1 \\
\frac14
\end{array}
\right]
}
-
\frac
{
\theta^{\prime\prime}
\left[
\begin{array}{c}
1 \\
\frac34
\end{array}
\right]
}
{
\theta
\left[
\begin{array}{c}
1 \\
\frac34
\end{array}
\right]
}
=
-4\sqrt{2} \pi^2
\theta
\left[
\begin{array}{c}
0 \\
0
\end{array}
\right](0,2\tau)
\theta
\left[
\begin{array}{c}
0 \\
0
\end{array}
\right](0,4\tau)
\theta^2
\left[
\begin{array}{c}
1 \\
0
\end{array}
\right](0,4\tau). 
\end{equation*}
Jacobi's triple product identity (\ref{eqn:Jacobi-triple}) yields 
\begin{equation*}
\frac
{
\theta^{\prime\prime}
\left[
\begin{array}{c}
1 \\
\frac14
\end{array}
\right]
}
{
\theta
\left[
\begin{array}{c}
1 \\
\frac14
\end{array}
\right]
}
-
\frac
{
\theta^{\prime\prime}
\left[
\begin{array}{c}
1 \\
\frac34
\end{array}
\right]
}
{
\theta
\left[
\begin{array}{c}
1 \\
\frac34
\end{array}
\right]
}
=
-16\sqrt{2}\pi^2\sum_{n=1}^{\infty} 
\left(
\sum_{d|n} \frac{n}{d} \left(\frac{8}{d} \right)
\right)
q^n. 
\end{equation*}
From the definition, it follows that 
\begin{align*}
\theta
\left[
\begin{array}{c}
0 \\
0
\end{array}
\right](0,2\tau)
\theta
\left[
\begin{array}{c}
0 \\
0
\end{array}
\right](0,4\tau)
\theta^2
\left[
\begin{array}{c}
1 \\
0
\end{array}
\right](0,4\tau)
=&
\left(
\sum_{n\in\mathbb{Z}} q^{n^2}
\right)
\left(
\sum_{n\in\mathbb{Z}} q^{2n^2}
\right)
\left(
q^{\frac12}
\sum_{n\in\mathbb{Z}} q^{4\cdot\frac{n(n+1)}{2}}
\right)^2  \\
=&
\sum_{n=0}^{\infty} 
M_{1,2\textrm{-}4,4}(n) q^{n+1},
\end{align*}
which proves the theorem. 
\end{proof}

\subsubsection{On $x^2+y^2+4t_z+4t_w$}

\begin{theorem}
\label{thm:$x^2+y^2+4t_z+4t_w}
{\it
For each $n\in\mathbb{N}_0,$ set 
\begin{equation*}
M_{1,1\textrm{-}4,4}(n):=
\sharp
\left\{
(x,y,z,w)\in\mathbb{Z}^4 \, | \,
x^2+y^2+4t_z+4t_w=n
\right\}. 
\end{equation*}
Then, we have 
\begin{equation*}
M_{1,1\textrm{-}4,4}(n)=
4
\left(
\sigma(n+1)+\sigma \left(\frac{n+1}{2}\right)-10\sigma\left(\frac{n+1}{4}\right)+8\sigma \left(\frac{n+1}{8}\right)
\right). 
\end{equation*}
}
\end{theorem}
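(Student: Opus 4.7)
The plan is to mirror the proof of Theorem~\ref{thm:1122}, replacing every ingredient by its $\epsilon=0$ analogue: Proposition~\ref{prop:preliminary-0-1/4,3/4} and Theorem~\ref{thm:0-1/4,3/4} in place of Proposition~\ref{prop:preliminary-1-1/4,3/4} and Theorem~\ref{thm:1-1/4,3/4}. First I observe that, by the product expansions used in Theorems~\ref{thm:1122} and~\ref{thm:x^2+2y^2+4t_z+4t_w},
\begin{equation*}
\theta^2\left[\begin{array}{c}0\\0\end{array}\right](0,2\tau)\,\theta^2\left[\begin{array}{c}1\\0\end{array}\right](0,4\tau)\;=\;q\sum_{n=0}^{\infty}M_{1,1\textrm{-}4,4}(n)\,q^n,\qquad q=\exp(2\pi i\tau),
\end{equation*}
so it suffices to realize the left side as a logarithmic $\tau$-derivative of an eta quotient.

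I would sum equations~(\ref{eqn:relation-0-1/4,3/4(1)}) and~(\ref{eqn:relation-0-1/4,3/4(2)}) and divide by two. The cross-term $2\tfrac{\theta^{\prime}[1,1/2]}{\theta[1,1/2]}\bigl\{\tfrac{\theta^{\prime}[0,1/4]}{\theta[0,1/4]}-\tfrac{\theta^{\prime}[0,3/4]}{\theta[0,3/4]}\bigr\}$ that emerges evaluates, via the derivative formulas~(\ref{eqn:analogue-1-1/2}), (\ref{eqn:analogue-0-1/4}) and~(\ref{eqn:analogue-0-3/4}), to
\begin{equation*}
-4\pi^2\,\theta^2\left[\begin{array}{c}0\\0\end{array}\right](0,2\tau)\,\theta^2\left[\begin{array}{c}1\\0\end{array}\right](0,4\tau),
\end{equation*}
precisely the generating function I want to produce. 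The heat equation~(\ref{eqn:heat}) then converts the surviving $\theta^{\prime\prime}/\theta$ and $\theta^{\prime\prime\prime}/\theta^{\prime}$ terms into $4\pi i\frac{d}{d\tau}\log$ of the theta-constant ratio $\theta[1,1/2]\theta[0,1/4]\theta[0,3/4]/\theta^{\prime}[1,1]$, while Theorem~\ref{thm:0-1/4,3/4} rewrites the residual $\bigl\{\theta^{\prime}[0,1/4]/\theta[0,1/4]\bigr\}^2+\bigl\{\theta^{\prime}[0,3/4]/\theta[0,3/4]\bigr\}^2$ as $4\pi i\frac{d}{d\tau}\log\bigl[\eta^2(\tau)\eta^3(2\tau)\eta^3(8\tau)/\eta^8(4\tau)\bigr]$. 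Collecting all pieces yields an identity
\begin{equation*}
4\pi i\,\frac{d}{d\tau}\log E(\tau)\;=\;4\pi^2\,\theta^2\left[\begin{array}{c}0\\0\end{array}\right](0,2\tau)\,\theta^2\left[\begin{array}{c}1\\0\end{array}\right](0,4\tau)
\end{equation*}
for an explicit eta quotient $E(\tau)$.

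To identify $E(\tau)$, I would use Jacobi's triple product~(\ref{eqn:Jacobi-triple}) together with Lemma~\ref{lem:Farkas-Kra} (to factor $\theta[0,1/4]\theta[0,3/4]$ through theta constants at $2\tau$) to collapse the theta-constant ratio to an eta quotient; multiplying by $\eta^2(\tau)\eta^3(2\tau)\eta^3(8\tau)/\eta^8(4\tau)$ then gives, up to an inconsequential constant, $E(\tau)=\eta^2(\tau)\eta(2\tau)\eta^2(8\tau)/\eta^5(4\tau)$. The balance $2\cdot 1+1\cdot 2+(-5)\cdot 4+2\cdot 8=0$ confirms that no constant term appears on the right. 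Finally I will expand $\frac{d}{d\tau}\log\eta(k\tau)=\pi ik/12-2\pi ik\sum_{N\ge 1}\sigma(N/k)q^N$ and match the coefficient of $q^{n+1}$ on the two sides, which reproduces $M_{1,1\textrm{-}4,4}(n)=4(\sigma(n+1)+\sigma((n+1)/2)-10\sigma((n+1)/4)+8\sigma((n+1)/8))$. The principal obstacle I anticipate is exactly this last bookkeeping step: converting $\theta[1,1/2]\theta[0,1/4]\theta[0,3/4]/\theta^{\prime}[1,1]$ correctly into an eta quotient and verifying that the fractional powers of $q$ cancel and the eta exponents combine as predicted; every other step is a mechanical parallel to the proof of Theorem~\ref{thm:1122}.
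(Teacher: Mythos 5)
Your proposal reproduces the paper's proof essentially verbatim: sum Proposition \ref{prop:preliminary-0-1/4,3/4}, evaluate the cross term via the derivative formulas to get $4\pi^2\theta^2\!\left[\begin{smallmatrix}0\\0\end{smallmatrix}\right](0,2\tau)\theta^2\!\left[\begin{smallmatrix}1\\0\end{smallmatrix}\right](0,4\tau)$ on the right, convert the remaining second-derivative terms with the heat equation, absorb the sum of squares via Theorem \ref{thm:0-1/4,3/4}, and arrive at the same eta quotient $\eta^2(\tau)\eta(2\tau)\eta^2(8\tau)/\eta^5(4\tau)$ as in equation (\ref{eqn:11-44-log-derivative}) before comparing $q$-coefficients. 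The approach and all intermediate identities are correct and match the paper.
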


\begin{proof}
Set $q=\exp(2\pi i \tau).$ 
Summing both sides of equations (\ref{eqn:relation-0-1/4,3/4(1)}) and (\ref{eqn:relation-0-1/4,3/4(2)}) yields 
\begin{align*}
&
\frac
{
\theta^{\prime\prime}
\left[
\begin{array}{c}
1 \\
\frac12
\end{array}
\right]
}
{
\theta
\left[
\begin{array}{c}
1 \\
\frac12
\end{array}
\right]
}
+
\frac
{
\theta^{\prime\prime}
\left[
\begin{array}{c}
0 \\
\frac14
\end{array}
\right]
}
{
\theta
\left[
\begin{array}{c}
0 \\
\frac14
\end{array}
\right]
}
+
\frac
{
\theta^{\prime\prime}
\left[
\begin{array}{c}
0 \\
\frac34
\end{array}
\right]
}
{
\theta
\left[
\begin{array}{c}
0 \\
\frac34
\end{array}
\right]
}
-
\frac
{
\theta^{\prime\prime \prime}
\left[
\begin{array}{c}
1 \\
1
\end{array}
\right]
}
{
\theta^{\prime}
\left[
\begin{array}{c}
1 \\
1
\end{array}
\right]
}  \\
&
+
2
\frac
{
\theta^{\prime} 
\left[
\begin{array}{c}
1 \\
\frac12
\end{array}
\right]
}
{
\theta
\left[
\begin{array}{c}
1 \\
\frac12
\end{array}
\right]
}
\cdot
\left\{
\frac
{
\theta^{\prime} 
\left[
\begin{array}{c}
0 \\
\frac14
\end{array}
\right]
}
{
\theta
\left[
\begin{array}{c}
0 \\
\frac14
\end{array}
\right]
}
-
\frac
{
\theta^{\prime} 
\left[
\begin{array}{c}
0 \\
\frac34
\end{array}
\right]
}
{
\theta
\left[
\begin{array}{c}
0 \\
\frac34
\end{array}
\right]
}
\right\}
+
\left[
\left\{
\frac
{
\theta^{\prime} 
\left[
\begin{array}{c}
0 \\
\frac14
\end{array}
\right]
}
{
\theta
\left[
\begin{array}{c}
0 \\
\frac14
\end{array}
\right]
}
\right\}^2
+
\left\{
\frac
{
\theta^{\prime} 
\left[
\begin{array}{c}
0 \\
\frac34
\end{array}
\right]
}
{
\theta
\left[
\begin{array}{c}
0 \\
\frac34
\end{array}
\right]
}
\right\}^2
\right]=0.
\end{align*}
The derivative formulas (\ref{eqn:analogue-1-1/2}), (\ref{eqn:analogue-0-1/4}) and (\ref{eqn:analogue-0-3/4}) imply that 
\begin{align*}
&
\frac
{
\theta^{\prime\prime}
\left[
\begin{array}{c}
1 \\
\frac12
\end{array}
\right]
}
{
\theta
\left[
\begin{array}{c}
1 \\
\frac12
\end{array}
\right]
}
+
\frac
{
\theta^{\prime\prime}
\left[
\begin{array}{c}
0 \\
\frac14
\end{array}
\right]
}
{
\theta
\left[
\begin{array}{c}
0 \\
\frac14
\end{array}
\right]
}
+
\frac
{
\theta^{\prime\prime}
\left[
\begin{array}{c}
0 \\
\frac34
\end{array}
\right]
}
{
\theta
\left[
\begin{array}{c}
0 \\
\frac34
\end{array}
\right]
}
-
\frac
{
\theta^{\prime\prime \prime}
\left[
\begin{array}{c}
1 \\
1
\end{array}
\right]
}
{
\theta^{\prime}
\left[
\begin{array}{c}
1 \\
1
\end{array}
\right]
}
+
\left[
\left\{
\frac
{
\theta^{\prime} 
\left[
\begin{array}{c}
0 \\
\frac14
\end{array}
\right]
}
{
\theta
\left[
\begin{array}{c}
0 \\
\frac14
\end{array}
\right]
}
\right\}^2
+
\left\{
\frac
{
\theta^{\prime} 
\left[
\begin{array}{c}
0 \\
\frac34
\end{array}
\right]
}
{
\theta
\left[
\begin{array}{c}
0 \\
\frac34
\end{array}
\right]
}
\right\}^2
\right] \\
=&
4\pi^2
\theta^2
\left[
\begin{array}{c}
0 \\
0
\end{array}
\right](0,2\tau)
\theta^2
\left[
\begin{array}{c}
1 \\
0
\end{array}
\right](0,4\tau). 
\end{align*}
The heat equation (\ref{eqn:heat}) and Jacobi's triple product identity (\ref{eqn:Jacobi-triple}) shows that 
\begin{equation*}
4\pi i \frac{d}{d\tau} \log \frac{\eta^3(4\tau)}{\eta^2(2\tau) \eta(8\tau)}+
\left[
\left\{
\frac
{
\theta^{\prime} 
\left[
\begin{array}{c}
0 \\
\frac14
\end{array}
\right]
}
{
\theta
\left[
\begin{array}{c}
0 \\
\frac14
\end{array}
\right]
}
\right\}^2
+
\left\{
\frac
{
\theta^{\prime} 
\left[
\begin{array}{c}
0 \\
\frac34
\end{array}
\right]
}
{
\theta
\left[
\begin{array}{c}
0 \\
\frac34
\end{array}
\right]
}
\right\}^2
\right] 
=
4\pi^2
\theta^2
\left[
\begin{array}{c}
0 \\
0
\end{array}
\right](0,2\tau)
\theta^2
\left[
\begin{array}{c}
1 \\
0
\end{array}
\right](0,4\tau). 
\end{equation*}
Theorem \ref{thm:0-1/4,3/4} yields 
\begin{equation}
\label{eqn:11-44-log-derivative}
4\pi i \frac{d}{d\tau} \log 
\frac{\eta^2(\tau) \eta(2\tau) \eta^2(8\tau)}{\eta^5(4\tau)}
=
4\pi^2
\theta^2
\left[
\begin{array}{c}
0 \\
0
\end{array}
\right](0,2\tau)
\theta^2
\left[
\begin{array}{c}
1 \\
0
\end{array}
\right](0,4\tau). 
\end{equation}
The theorem can be obtained by considering that 
\begin{equation*}
\theta^2
\left[
\begin{array}{c}
0 \\
0
\end{array}
\right](0,2\tau)
\theta^2
\left[
\begin{array}{c}
1 \\
0
\end{array}
\right](0,4\tau)
=
\left(
\sum_{n\in\mathbb{Z}} q^{n^2}
\right)^2
\left(
q^{\frac12}
\sum_{n\in\mathbb{Z}} q^{4\cdot \frac{n(n+1)}{2}}
\right)^2
=
\sum_{x,y,z,w\in\mathbb{Z}}
q^{x^2+y^2+4t_z+4t_w+1}. 
\end{equation*}
\end{proof}

\subsubsection{On $x^2+2y^2+2z^2+4t_w$}

\begin{theorem}
\label{thm:$x^2+2y^2+2z^2+4t_w}
{\it
For each $n\in\mathbb{N}_0,$ set 
\begin{equation*}
M_{1,2,2\textrm{-}4}(n):=
\sharp
\left\{
(x,y,z,w)\in\mathbb{Z}^4 \, | \,
x^2+2y^2+2z^2+4t_w=n
\right\}. 
\end{equation*}
Then, we have 
\begin{equation*}
M_{1,2,2\textrm{-}4}(n)=
2\sum_{d|2n+1} \frac{2n+1}{d} \left( \frac{8}{d} \right).
\end{equation*}
}
\end{theorem}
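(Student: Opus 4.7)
The plan is to parallel the proof of Theorem~\ref{thm:x^2+2y^2+4t_z+4t_w}, swapping the characteristics $\bigl[\begin{smallmatrix}1\\1/4\end{smallmatrix}\bigr], \bigl[\begin{smallmatrix}1\\3/4\end{smallmatrix}\bigr]$ for $\bigl[\begin{smallmatrix}0\\1/4\end{smallmatrix}\bigr], \bigl[\begin{smallmatrix}0\\3/4\end{smallmatrix}\bigr]$. First I would subtract equation~(\ref{eqn:relation-0-1/4,3/4(2)}) from~(\ref{eqn:relation-0-1/4,3/4(1)}) of Proposition~\ref{prop:preliminary-0-1/4,3/4}; this kills the $\theta''\bigl[\begin{smallmatrix}1\\1/2\end{smallmatrix}\bigr]$ and $\theta'''\bigl[\begin{smallmatrix}1\\1\end{smallmatrix}\bigr]$ contributions. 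Applying the derivative formulas~(\ref{eqn:analogue-1-1/2}),~(\ref{eqn:analogue-0-1/4}),~(\ref{eqn:analogue-0-3/4}), the surviving sum $\theta'\bigl[\begin{smallmatrix}0\\1/4\end{smallmatrix}\bigr]/\theta\bigl[\begin{smallmatrix}0\\1/4\end{smallmatrix}\bigr]+\theta'\bigl[\begin{smallmatrix}0\\3/4\end{smallmatrix}\bigr]/\theta\bigl[\begin{smallmatrix}0\\3/4\end{smallmatrix}\bigr]$ collapses to $-2\sqrt{2}\pi\,\theta\bigl[\begin{smallmatrix}0\\0\end{smallmatrix}\bigr](0,2\tau)\,\theta\bigl[\begin{smallmatrix}1\\0\end{smallmatrix}\bigr](0,4\tau)$ and the corresponding difference of squares to $-4\sqrt{2}\pi^2\,\theta\bigl[\begin{smallmatrix}0\\0\end{smallmatrix}\bigr](0,2\tau)\,\theta^3\bigl[\begin{smallmatrix}1\\0\end{smallmatrix}\bigr](0,4\tau)$. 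Combining everything and invoking $\theta^2\bigl[\begin{smallmatrix}0\\0\end{smallmatrix}\bigr](0,2\tau)-\theta^2\bigl[\begin{smallmatrix}1\\0\end{smallmatrix}\bigr](0,4\tau)=\theta^2\bigl[\begin{smallmatrix}0\\0\end{smallmatrix}\bigr](0,4\tau)$ from Lemma~\ref{lem:Farkas-Kra} produces the key identity
\begin{equation*}
\frac{\theta''\bigl[\begin{smallmatrix}0\\1/4\end{smallmatrix}\bigr]}{\theta\bigl[\begin{smallmatrix}0\\1/4\end{smallmatrix}\bigr]} - \frac{\theta''\bigl[\begin{smallmatrix}0\\3/4\end{smallmatrix}\bigr]}{\theta\bigl[\begin{smallmatrix}0\\3/4\end{smallmatrix}\bigr]} = -4\sqrt{2}\,\pi^2\,\theta\bigl[\begin{smallmatrix}0\\0\end{smallmatrix}\bigr](0,2\tau)\,\theta^2\bigl[\begin{smallmatrix}0\\0\end{smallmatrix}\bigr](0,4\tau)\,\theta\bigl[\begin{smallmatrix}1\\0\end{smallmatrix}\bigr](0,4\tau).
\end{equation*}

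Setting $q=\exp(2\pi i\tau)$, the right-hand side is $-4\sqrt{2}\pi^2$ times the generating function of $M_{1,2,2\text{-}4}$: since $\theta\bigl[\begin{smallmatrix}0\\0\end{smallmatrix}\bigr](0,2\tau)=\sum_{x\in\Z}q^{x^2}$, $\theta\bigl[\begin{smallmatrix}0\\0\end{smallmatrix}\bigr](0,4\tau)=\sum_{y\in\Z}q^{2y^2}$, and $\theta\bigl[\begin{smallmatrix}1\\0\end{smallmatrix}\bigr](0,4\tau)=q^{1/2}\sum_{w\in\Z}q^{4t_w}$, the product equals $\sum_{n\ge 0}M_{1,2,2\text{-}4}(n)\,q^{n+1/2}$. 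For the left-hand side I would apply Jacobi's triple product~(\ref{eqn:Jacobi-triple}) with $x=e^{\pi i\tau}$, so that $\theta\bigl[\begin{smallmatrix}0\\1/4\end{smallmatrix}\bigr](0,\tau)/\theta\bigl[\begin{smallmatrix}0\\3/4\end{smallmatrix}\bigr](0,\tau)$ collapses to $\prod_{n\ge 1}(1+\sqrt{2}\,x^{2n-1}+x^{4n-2})/(1-\sqrt{2}\,x^{2n-1}+x^{4n-2})$. Using the series expansion $\log\bigl(\frac{1+\sqrt{2}y+y^2}{1-\sqrt{2}y+y^2}\bigr)=2\sqrt{2}\sum_{k\text{ odd}}\frac{1}{k}\bigl(\frac{8}{k}\bigr)y^k$, which follows from $\log\frac{1+z}{1-z}=2\sum_{k\text{ odd}}z^k/k$ together with $2\cos(\pi k/4)=\sqrt{2}\bigl(\frac{8}{k}\bigr)$ for odd $k$, then differentiating in $\tau$ and invoking the heat equation~(\ref{eqn:heat}), yields
\begin{equation*}
\frac{\theta''\bigl[\begin{smallmatrix}0\\1/4\end{smallmatrix}\bigr]}{\theta\bigl[\begin{smallmatrix}0\\1/4\end{smallmatrix}\bigr]} - \frac{\theta''\bigl[\begin{smallmatrix}0\\3/4\end{smallmatrix}\bigr]}{\theta\bigl[\begin{smallmatrix}0\\3/4\end{smallmatrix}\bigr]} = -8\sqrt{2}\,\pi^2\sum_{n\ge 0}\Bigl(\sum_{d\mid 2n+1}\frac{2n+1}{d}\Bigl(\frac{8}{d}\Bigr)\Bigr)q^{n+1/2}.
\end{equation*}

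Comparing coefficients of $q^{n+1/2}$ in the two expressions for the left-hand side and cancelling $-4\sqrt{2}\pi^2$ then delivers the theorem. The one subtle point, which is the main bookkeeping obstacle, is the half-integer exponent management: because the $\bigl[\begin{smallmatrix}0\\\ast\end{smallmatrix}\bigr]$ characteristics pair Jacobi factors with the odd exponent $x^{2n-1}$ rather than the even $x^{2n}$ seen in the $\bigl[\begin{smallmatrix}1\\\ast\end{smallmatrix}\bigr]$ case of Theorem~\ref{thm:x^2+2y^2+4t_z+4t_w}, the Lambert-series indices $(2n-1)k$ are automatically odd, which is precisely what produces the restriction $d\mid 2n+1$ on the right-hand side; keeping the conversion $x^{2m+1}=q^{m+1/2}$ straight throughout is where arithmetic slips are easiest to make.
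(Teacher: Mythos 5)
Your proposal is correct and follows essentially the same route as the paper: subtract the two residue relations of Proposition \ref{prop:preliminary-0-1/4,3/4}, apply the derivative formulas (\ref{eqn:analogue-1-1/2}), (\ref{eqn:analogue-0-1/4}), (\ref{eqn:analogue-0-3/4}) together with the duplication identity from Lemma \ref{lem:Farkas-Kra} to reach the key identity for $\theta''\bigl[\begin{smallmatrix}0\\1/4\end{smallmatrix}\bigr]/\theta\bigl[\begin{smallmatrix}0\\1/4\end{smallmatrix}\bigr]-\theta''\bigl[\begin{smallmatrix}0\\3/4\end{smallmatrix}\bigr]/\theta\bigl[\begin{smallmatrix}0\\3/4\end{smallmatrix}\bigr]$, and then compare $x^{2n+1}$-coefficients. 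Your explicit Lambert-series derivation of the $\bigl(\frac{8}{d}\bigr)$ expansion is a step the paper leaves implicit under "Jacobi's triple product identity yields," and it checks out.
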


\begin{proof}
Set $x=\exp(\pi i \tau)$ and $q=\exp(2\pi i \tau).$ 
By Lemma \ref{lem:Farkas-Kra}, we first note that
\begin{equation*}
\theta^2
\left[
\begin{array}{c}
0 \\
0
\end{array}
\right](0,\tau)
=
\theta^2
\left[
\begin{array}{c}
0 \\
0
\end{array}
\right](0,2\tau)
+
\theta^2
\left[
\begin{array}{c}
1\\
0
\end{array}
\right](0,2\tau). 
\end{equation*}
Subtracting both sides of equations (\ref{eqn:relation-0-1/4,3/4(1)}) and (\ref{eqn:relation-0-1/4,3/4(2)}) yields 
\begin{equation*}
\frac
{
\theta^{\prime\prime}
\left[
\begin{array}{c}
0 \\
\frac14
\end{array}
\right]
}
{
\theta
\left[
\begin{array}{c}
0 \\
\frac14
\end{array}
\right]
}
-
\frac
{
\theta^{\prime\prime}
\left[
\begin{array}{c}
0 \\
\frac34
\end{array}
\right]
}
{
\theta
\left[
\begin{array}{c}
0 \\
\frac34
\end{array}
\right]
}
=
-4\sqrt{2} \pi^2
\theta
\left[
\begin{array}{c}
0 \\
0
\end{array}
\right](0,2\tau)
\theta^2
\left[
\begin{array}{c}
0 \\
0
\end{array}
\right](0,4\tau)
\theta
\left[
\begin{array}{c}
1 \\
0
\end{array}
\right](0,4\tau). 
\end{equation*}
Jacobi's triple product identity (\ref{eqn:Jacobi-triple}) yields 
\begin{equation*}
\frac
{
\theta^{\prime\prime}
\left[
\begin{array}{c}
0 \\
\frac14
\end{array}
\right]
}
{
\theta
\left[
\begin{array}{c}
0 \\
\frac14
\end{array}
\right]
}
-
\frac
{
\theta^{\prime\prime}
\left[
\begin{array}{c}
0 \\
\frac34
\end{array}
\right]
}
{
\theta
\left[
\begin{array}{c}
0 \\
\frac34
\end{array}
\right]
}
=
-8\sqrt{2}\pi^2\sum_{n=0}^{\infty} 
\left(
\sum_{d|2n+1} \frac{2n+1}{d}\left(\frac{8}{d} \right)
\right)
x^{2n+1}.  
\end{equation*}
From the definition, it follows that 
\begin{align*}
\theta
\left[
\begin{array}{c}
0 \\
0
\end{array}
\right](0,2\tau)
\theta^2
\left[
\begin{array}{c}
0 \\
0
\end{array}
\right](0,4\tau)
\theta
\left[
\begin{array}{c}
1 \\
0
\end{array}
\right](0,4\tau)
=&
\left(
\sum_{n\in\mathbb{Z}} q^{n^2}
\right)
\left(
\sum_{n\in\mathbb{Z}} q^{2n^2}
\right)^2
\left(
q^{\frac12}
\sum_{n\in\mathbb{Z}} q^{4\cdot\frac{n(n+1)}{2}}
\right)  \\
=&
\sum_{n=0}^{\infty} 
M_{1,2,2\textrm{-}4}(n) x^{2n+1},
\end{align*}
which proves the theorem. 
\end{proof}

\subsection{More applications}
In this subsection, 
for each $m\in\mathbb{N},$ 
we set
\begin{equation*}
\left(\frac{8}{m}\right) 
=
\begin{cases}
+1, \,\,&\text{if}  \,\,m\equiv \pm1  \,\,(\mathrm{mod} \, 8),  \\
-1, \,\,&\text{if}  \,\,m\equiv \pm3 \,\,(\mathrm{mod} \, 8),  \\
0, \,\,&\text{if}  \,\,m\equiv 0 \,\,(\mathrm{mod} \, 2).  \\
\end{cases}
\end{equation*}
Moreover we have the following lemma:
\begin{lemma}
\label{lem:first-order-deri-square}
{\it
For every $(z,\tau)\in\mathbb{C}\times\mathbb{H}^2,$ 
we have 
\begin{equation*}
\left\{
\frac{
\theta^{\prime}
\left[
\begin{array}{c}
\epsilon \\
\epsilon^{\prime}
\end{array}
\right](z)
}
{
\theta
\left[
\begin{array}{c}
\epsilon \\
\epsilon^{\prime}
\end{array}
\right](z)
}
\right\}^2
=
\frac{
\theta^{\prime \prime}
\left[
\begin{array}{c}
\epsilon \\
\epsilon^{\prime}
\end{array}
\right](z)
}
{
\theta
\left[
\begin{array}{c}
\epsilon \\
\epsilon^{\prime}
\end{array}
\right](z)
}
-
\frac
{
d^2
}
{
dz^2
}
\log
\theta
\left[
\begin{array}{c}
\epsilon \\
\epsilon^{\prime}
\end{array}
\right](z). 
\end{equation*}
}
\end{lemma}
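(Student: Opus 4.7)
\textbf{Proof plan for Lemma \ref{lem:first-order-deri-square}.} The identity is a purely formal consequence of the chain rule applied to $\log \theta[\epsilon, \epsilon'](z)$, viewed as a function of $z$ with $\tau$ held fixed. The strategy is to compute $\frac{d^2}{dz^2}\log\theta[\epsilon,\epsilon'](z)$ directly from the quotient rule and then rearrange.

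The plan is to write $f(z) = \theta\!\left[\begin{smallmatrix}\epsilon\\\epsilon'\end{smallmatrix}\right](z)$ for brevity. First, differentiating once,
\begin{equation*}
\frac{d}{dz}\log f(z) = \frac{f'(z)}{f(z)}.
\end{equation*}
Then differentiating a second time and applying the quotient rule gives
\begin{equation*}
\frac{d^2}{dz^2}\log f(z) = \frac{f''(z) f(z) - f'(z)^2}{f(z)^2} = \frac{f''(z)}{f(z)} - \left(\frac{f'(z)}{f(z)}\right)^2.
\end{equation*}
Rearranging this identity yields exactly the statement of the lemma.

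The only minor point to verify is that this manipulation is legitimate at the points being considered, i.e.\ that $f(z)\neq 0$ in a neighborhood of the relevant $z$; since $\theta[\epsilon,\epsilon'](z,\tau)$ has isolated zeros (one in the fundamental parallelogram, as recalled in Section \ref{sec:properties}), the identity holds as an equality of meromorphic functions on $\mathbb{C}$, and pointwise wherever $f(z)\neq 0$. There is essentially no obstacle here; the lemma is a logarithmic-derivative identity and the proof is a two-line computation. The only reason to state it as a lemma is that it will be convenient in later sections to convert squares of the form $(\theta'/\theta)^2$ into combinations of $\theta''/\theta$ and a logarithmic second derivative, which interacts cleanly with the heat equation (\ref{eqn:heat}).
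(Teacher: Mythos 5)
Your proof is correct and matches the paper, which simply states that the lemma "can be proved by direct calculation"; your two-line computation of $\frac{d^2}{dz^2}\log f = \frac{f''}{f} - \bigl(\frac{f'}{f}\bigr)^2$ is exactly that calculation. The remark about the zeros of the theta function being isolated is a reasonable extra precaution but not needed beyond what you already note.
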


\begin{proof}
The lemma can be proved by direct calculation. 
\end{proof}

\subsubsection{On $x^2+y^2+z^2+2w^2,$ and $x^2+2y^2+2z^2+2w^2$}

\begin{theorem}
\label{thm:x^2+y^2+z^2+2w^2}
{\it
For each $n\in\mathbb{N},$ set 
\begin{equation*}
S_{1,1,1,2}(n):=
\sharp
\left\{
(x,y,z,w)\in\mathbb{Z}^4 \, | \,
x^2+y^2+z^2+2w^2=n
\right\}. 
\end{equation*}
Then, we have 
\begin{equation*}
S_{1,1,1,2}(n)=
8
\sum_{d|n} \frac{n}{d} \left( \frac{8}{d} \right)
-
2
\sum_{d|n}d \left( \frac{8}{d} \right). 
\end{equation*}
}
\end{theorem}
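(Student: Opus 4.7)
The plan is to isolate the generating function $\sum_{n\ge 0}S_{1,1,1,2}(n)q^{n}=\theta^{3}\!\left[\begin{array}{c}0\\0\end{array}\right]\!(0,2\tau)\,\theta\!\left[\begin{array}{c}0\\0\end{array}\right]\!(0,4\tau)$ by studying the mixed logarithmic-derivative product $\frac{\theta^{\prime}\left[\begin{array}{c}1\\\frac12\end{array}\right]\!(0,\tau)}{\theta\left[\begin{array}{c}1\\\frac12\end{array}\right]\!(0,\tau)}\cdot\frac{\theta^{\prime}\left[\begin{array}{c}1\\\frac14\end{array}\right]\!(0,\tau)}{\theta\left[\begin{array}{c}1\\\frac14\end{array}\right]\!(0,\tau)}$. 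By the derivative formulas (\ref{eqn:analogue-1-1/2}) and (\ref{eqn:analogue-1-1/4}), this product equals $\sqrt{2}\pi^{2}\theta^{3}\!\left[\begin{array}{c}0\\0\end{array}\right]\!(0,2\tau)\theta\!\left[\begin{array}{c}0\\0\end{array}\right]\!(0,4\tau)-\pi^{2}\theta^{2}\!\left[\begin{array}{c}0\\0\end{array}\right]\!(0,2\tau)\theta^{2}\!\left[\begin{array}{c}0\\0\end{array}\right]\!(0,4\tau)$; once it is rewritten as logarithmic $\tau$-derivatives of $\eta$-products plus a tractable residual $q$-series, equation (\ref{eqn:1122-log-derivative}) removes the $\theta^{2}\!\left[\begin{array}{c}0\\0\end{array}\right]\!(0,2\tau)\theta^{2}\!\left[\begin{array}{c}0\\0\end{array}\right]\!(0,4\tau)$ summand and the target series is isolated.

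I first solve the preliminary identity (\ref{eqn:relation-1-1/4,3/4(1)}) of Proposition \ref{prop:preliminary-1-1/4,3/4} for four times this mixed product. The right-hand side contains three second-derivative-over-theta ratios together with a factor $2\bigl(\theta^{\prime}\left[\begin{array}{c}1\\\frac14\end{array}\right]/\theta\left[\begin{array}{c}1\\\frac14\end{array}\right]\bigr)^{2}$. Lemma \ref{lem:first-order-deri-square} splits this square as $\theta^{\prime\prime}\left[\begin{array}{c}1\\\frac14\end{array}\right]/\theta\left[\begin{array}{c}1\\\frac14\end{array}\right]-\frac{d^{2}}{dz^{2}}\log\theta\!\left[\begin{array}{c}1\\\frac14\end{array}\right]\!(z)|_{z=0}$. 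After substitution the heat equation (\ref{eqn:heat}) converts every $\theta^{\prime\prime}/\theta$ and $\theta^{\prime\prime\prime}/\theta^{\prime}$ ratio into $4\pi i\frac{d}{d\tau}\log(\cdot)$, and Jacobi's derivative formula (\ref{eqn:Jacobi-derivative}) together with the triple product (\ref{eqn:Jacobi-triple}) identify the argument as an explicit $\eta$-product whose $\tau$-logarithmic derivative, combined with the $\eta$-product contribution from (\ref{eqn:1122-log-derivative}), supplies the $8\sum_{d\mid n}(n/d)(8/d)$ summand via the standard expansion $\frac{d}{d\tau}\log\eta(k\tau)=\frac{\pi i k}{12}-2\pi i k\sum_{m\ge 1}\sigma(m)q^{km}$.

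The delicate step is the evaluation of $\frac{d^{2}}{dz^{2}}\log\theta\!\left[\begin{array}{c}1\\\frac14\end{array}\right]\!(z)|_{z=0}$ as a $q$-series. Taking the logarithm of (\ref{eqn:Jacobi-triple}) for the characteristic $\left[\begin{array}{c}1\\\frac14\end{array}\right]$, differentiating twice in $\zeta$, and setting $\zeta=0$ yields two Lambert-type sums over $n\ge 1$ whose summands are weighted by the conjugate eighth roots $e^{\pm\pi i/4}$. Expanding each summand geometrically in an inner index $k$ and combining the two conjugate series, the combinations $e^{k\pi i/4}+e^{-k\pi i/4}=2\cos(k\pi/4)$ organize the surviving coefficients so that, for odd $k$, the factor $(-1)^{k-1}\cos(k\pi/4)$ is proportional to the Dirichlet character $(8/k)$; after the imaginary parts cancel against contributions coming from the heat-equation output, the real Lambert series reduces to a constant multiple of $\sum_{n\ge 1}\bigl(\sum_{d\mid n}d\,(8/d)\bigr)q^{n}$, supplying the $-2\sum_{d\mid n}d\,(8/d)$ summand. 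Comparing Fourier coefficients of the final identity completes the proof.

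The main obstacle is the eighth-roots-of-unity bookkeeping in this Lambert series: one must verify that after combining the two conjugate sums (including the non-Lambert constant arising from the $n=1$ term of the second sum, whose exponent is $x^{0}$) the imaginary contributions cancel against those from the heat-equation output, and that the residual real series regroups into the character-twisted divisor sum $\sum_{d\mid n}d\,(8/d)$ with the coefficient $-2$; simultaneously, the residual $\eta$-product, once the contribution from (\ref{eqn:1122-log-derivative}) has been absorbed, must be identified as the one whose $\tau$-logarithmic derivative contributes exactly $8\sum_{d\mid n}(n/d)(8/d)$.
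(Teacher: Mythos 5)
Your proposal follows the paper's proof essentially step for step: the paper likewise starts from equation (\ref{eqn:relation-1-1/4,3/4(1)}), uses Lemma \ref{lem:first-order-deri-square} to trade $2\bigl(\theta^{\prime}\bigl[\begin{smallmatrix}1\\ \frac14\end{smallmatrix}\bigr]/\theta\bigl[\begin{smallmatrix}1\\ \frac14\end{smallmatrix}\bigr]\bigr)^{2}$ for $2\theta^{\prime\prime}\bigl[\begin{smallmatrix}1\\ \frac14\end{smallmatrix}\bigr]/\theta\bigl[\begin{smallmatrix}1\\ \frac14\end{smallmatrix}\bigr]$ minus the second $z$-logarithmic derivative, evaluates the cross term by (\ref{eqn:analogue-1-1/2}) and (\ref{eqn:analogue-1-1/4}) to isolate $4\sqrt{2}\pi^{2}\theta^{3}\left[\begin{array}{c}0\\0\end{array}\right](0,2\tau)\theta\left[\begin{array}{c}0\\0\end{array}\right](0,4\tau)$, applies the heat equation, and disposes of the $\theta^{2}(0,2\tau)\theta^{2}(0,4\tau)$ term with (\ref{eqn:1122-log-derivative}). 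Your identification of the generating function and your eighth-roots-of-unity analysis of $\frac{d^{2}}{dz^{2}}\log\theta\left[\begin{array}{c}1\\ \frac14\end{array}\right](z)\big|_{z=0}$, producing $-2\sum_{d\mid n}d\,(8/d)$, are also the right mechanism. The one step that does not go through as written is your claim that the heat-equation output $4\pi i\frac{d}{d\tau}\log\bigl(\theta\bigl[\begin{smallmatrix}1\\ \frac12\end{smallmatrix}\bigr]\theta^{4}\bigl[\begin{smallmatrix}1\\ \frac14\end{smallmatrix}\bigr]/\theta^{\prime}\bigl[\begin{smallmatrix}1\\1\end{smallmatrix}\bigr]\bigr)$ is the logarithmic derivative of an $\eta$-product, to be expanded "via the standard expansion of $\frac{d}{d\tau}\log\eta(k\tau)$": such expansions yield only integer combinations of $\sigma(n/k)$, and $\sum_{d\mid n}\frac{n}{d}\left(\frac{8}{d}\right)$ is not of that form (compare $n=3,5,9$). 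The quotient is not an $\eta$-product, because $\theta^{4}\left[\begin{array}{c}1\\ \frac14\end{array}\right]$ alone carries the factors $(1+e^{\pm\pi i/4}x^{2n})$; the paper handles this by inserting $\theta^{4}\left[\begin{array}{c}1\\ \frac34\end{array}\right]$ into numerator and denominator, so that the numerator becomes a genuine $\eta$-quotient and the leftover $\bigl(\theta\bigl[\begin{smallmatrix}1\\ \frac14\end{smallmatrix}\bigr]/\theta\bigl[\begin{smallmatrix}1\\ \frac34\end{smallmatrix}\bigr]\bigr)^{4}$ contributes, through its $\tau$-logarithmic derivative, the exponent-weighted $(8/d)$-twisted Lambert series that produces $8\sum_{d\mid n}\frac{n}{d}\left(\frac{8}{d}\right)$. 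This is the same eighth-root bookkeeping you already carry out for the $z$-derivative term, so the repair stays entirely within your framework, but without it the $8\sum_{d\mid n}\frac{n}{d}\left(\frac{8}{d}\right)$ summand cannot be obtained.
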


\begin{proof}
We first note that for each $n\in\mathbb{N},$
\begin{equation*}
\sum_{d|n, d:odd} d=
\sigma(n)-2\sigma \left(\frac{n}{2}\right). 
\end{equation*}
The derivatve formulas (\ref{eqn:analogue-0-1/4}), (\ref{eqn:analogue-0-3/4}), and  equation (\ref{eqn:relation-1-1/4,3/4(1)}) yield
\begin{align*}
&
4\sqrt{2} \pi^2 
\theta^3
\left[
\begin{array}{c}
0\\
0
\end{array}
\right](0,2\tau)
\theta
\left[
\begin{array}{c}
0\\
0
\end{array}
\right](0,4\tau) \\
=&
-4\pi i 
\frac{d}{d\tau}
\log
\frac
{
\theta
\left[
\begin{array}{c}
1 \\
\frac12
\end{array}
\right]
\theta^4
\left[
\begin{array}{c}
1 \\
\frac14
\end{array}
\right]
\theta^4
\left[
\begin{array}{c}
1 \\
\frac34
\end{array}
\right]
}
{
\theta^{\prime}
\left[
\begin{array}{c}
1 \\
1
\end{array}
\right]
\theta^4
\left[
\begin{array}{c}
1 \\
\frac34
\end{array}
\right]
}
+
4 \pi^2 
\theta^2
\left[
\begin{array}{c}
0\\
0
\end{array}
\right](0,2\tau)
\theta^2
\left[
\begin{array}{c}
0\\
0
\end{array}
\right](0,4\tau)  \\
&\hspace{30mm}
+
2
\left.
\frac{d^2}{dz^2}
\log
\theta
\left[
\begin{array}{c}
1 \\
\frac14
\end{array}
\right]
(z)
\right|_{z=0}. 
\end{align*}
The theorem follows from equation (\ref{eqn:1122-log-derivative}) and Jacobi's triple product identity (\ref{eqn:Jacobi-triple}).
\end{proof}

\begin{theorem}
\label{thm:x^2+2y^2+2z^2+2w^2}
{\it
For each $n\in\mathbb{N},$ set 
\begin{equation*}
S_{1,2,2,2}(n):=
\sharp
\left\{
(x,y,z,w)\in\mathbb{Z}^4 \, | \,
x^2+2y^2+2z^2+2w^2=n
\right\}. 
\end{equation*}
Then, we have 
\begin{equation*}
S_{1,2,2,2}(n)=
4
\sum_{d|n} \frac{n}{d} \left( \frac{8}{d} \right)
-
2
\sum_{d|n}d \left( \frac{8}{d} \right). 
\end{equation*}
}
\end{theorem}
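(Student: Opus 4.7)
The plan is to express the generating function of $S_{1,2,2,2}$, namely
\begin{equation*}
\theta\left[\begin{array}{c}0\\0\end{array}\right](0,2\tau)\,\theta^3\left[\begin{array}{c}0\\0\end{array}\right](0,4\tau),
\end{equation*}
in terms of products already analyzed in this section, rather than re-running the full residue-theoretic machinery. Abbreviate
\begin{equation*}
A=\theta\left[\begin{array}{c}0\\0\end{array}\right](0,2\tau),\qquad B=\theta\left[\begin{array}{c}0\\0\end{array}\right](0,4\tau),\qquad D=\theta\left[\begin{array}{c}1\\0\end{array}\right](0,4\tau).
\end{equation*}
The first step is to invoke Lemma \ref{lem:Farkas-Kra} in the form $A^2=B^2+D^2$ (already cited at the opening of the proofs of Theorems \ref{thm:1-1/4,3/4}, \ref{thm:0-1/4,3/4}, and \ref{thm:1122}). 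Multiplying by $AB$ rearranges this as $AB^3 = A^3 B - ABD^2$.

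Next I would match each summand on the right with a product whose $q$-expansion has already been pinned down. The product $A^3 B$ is precisely the generating function appearing in Theorem \ref{thm:x^2+y^2+z^2+2w^2}, so its $q^n$-coefficient equals $S_{1,1,1,2}(n) = 8\sum_{d\mid n}(n/d)(8/d) - 2\sum_{d\mid n} d\,(8/d)$. The product $ABD^2$ is the generating series that arose in the proof of Theorem \ref{thm:x^2+2y^2+4t_z+4t_w}; there the $q^{n+1}$-coefficient of $ABD^2$ was identified with $M_{1,2\textrm{-}4,4}(n)=4\sum_{d\mid n+1}\frac{n+1}{d}(8/d)$, so the $q^n$-coefficient of $ABD^2$ is $M_{1,2\textrm{-}4,4}(n-1) = 4\sum_{d\mid n}(n/d)(8/d)$.

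Subtracting coefficient by coefficient then yields
\begin{equation*}
S_{1,2,2,2}(n) \;=\; S_{1,1,1,2}(n) \,-\, M_{1,2\textrm{-}4,4}(n-1) \;=\; 4\sum_{d\mid n}\frac{n}{d}\Bigl(\frac{8}{d}\Bigr) \,-\, 2\sum_{d\mid n} d\,\Bigl(\frac{8}{d}\Bigr),
\end{equation*}
which is the desired formula. Since the single nontrivial ingredient is the Farkas--Kra identity $A^2=B^2+D^2$, I do not anticipate a substantive difficulty beyond correctly tracking the $q$-shift in $ABD^2$, which picks up an overall factor $q$ from $\theta^2\left[\begin{array}{c}1\\0\end{array}\right](0,4\tau)$; this shift is what aligns $M_{1,2\textrm{-}4,4}(n-1)$ with the $n$-th coefficient rather than the $(n+1)$-st.

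If instead one prefers to mirror the residue-theoretic derivation of Theorem \ref{thm:x^2+y^2+z^2+2w^2} verbatim, the natural alternative is to start from equation (\ref{eqn:relation-1-1/4,3/4(2)}) in place of (\ref{eqn:relation-1-1/4,3/4(1)}), substitute the derivative formulas (\ref{eqn:analogue-1-1/2}) and (\ref{eqn:analogue-1-3/4}), apply Lemma \ref{lem:first-order-deri-square} together with the heat equation (\ref{eqn:heat}) and equation (\ref{eqn:1122-log-derivative}) to obtain an expression for $4\sqrt{2}\pi^2\,A^3 B$, and then correct by $4\sqrt{2}\pi^2\,ABD^2 = 4\pi i\,\frac{d}{d\tau}\log\bigl(\theta\left[\begin{array}{c}1\\3/4\end{array}\right]/\theta\left[\begin{array}{c}1\\1/4\end{array}\right]\bigr)$, which falls out of the difference of (\ref{eqn:relation-1-1/4,3/4(1)}) and (\ref{eqn:relation-1-1/4,3/4(2)}). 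The main labour in that longer route is expanding the second $\zeta$-derivative of $\log\theta\left[\begin{array}{c}1\\3/4\end{array}\right](\zeta,\tau)$ at $\zeta=0$ as a Lambert series in $q$ via Jacobi's triple product identity, and recognizing the resulting character sum as the $\sum_{d\mid n} d\,(8/d)$ contribution; the Farkas--Kra shortcut described above bypasses this calculation entirely.
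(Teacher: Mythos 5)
Your argument is correct, and it reaches the theorem by a genuinely different route than the paper. The paper also reduces the series $AB^{3}$ to the already-known $A^{3}B$ of Theorem \ref{thm:x^2+y^2+z^2+2w^2}, but it does so by deriving one more residue/derivative relation: it combines the derivative formulas (\ref{eqn:analogue-0-1/4}), (\ref{eqn:analogue-0-3/4}) with equation (\ref{eqn:relation-1-1/4,3/4(1)}) to get an identity of the form
\begin{equation*}
-4\sqrt{2}\pi^{2}A^{3}B+4\sqrt{2}\pi^{2}AB^{3}
=4\pi i\,\frac{d}{d\tau}\log\frac{\theta\left[\begin{array}{c}1\\ \frac12\end{array}\right]\theta^{2}\left[\begin{array}{c}1\\ \frac14\end{array}\right]}{\theta^{\prime}\left[\begin{array}{c}1\\ 1\end{array}\right]}+2\pi^{2}\theta^{4}\left[\begin{array}{c}0\\ 0\end{array}\right](0,4\tau),
\end{equation*}
and then evaluates the right-hand side using Theorem \ref{thm:4-squares} (for $\theta^{4}\left[\begin{array}{c}0\\0\end{array}\right](0,4\tau)$) together with Theorem \ref{thm:x^2+y^2+z^2+2w^2}. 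Your shortcut replaces all of that analytic work by the single algebraic identity $A^{2}=B^{2}+D^{2}$ (Lemma \ref{lem:Farkas-Kra} applied with $\tau$ replaced by $2\tau$), giving $AB^{3}=A^{3}B-ABD^{2}$ and hence $S_{1,2,2,2}(n)=S_{1,1,1,2}(n)-M_{1,2\textrm{-}4,4}(n-1)$; both ingredients are theorems proved earlier in the section, so there is no circularity, and the $q$-shift coming from the factor $q$ in $D^{2}$ is handled correctly. What the paper's route buys is independence from Theorem \ref{thm:x^2+2y^2+4t_z+4t_w}; what yours buys is the elimination of any new Lambert-series or log-derivative computation. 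The arithmetic checks out: $8\sum_{d\mid n}\frac{n}{d}\left(\frac{8}{d}\right)-2\sum_{d\mid n}d\left(\frac{8}{d}\right)-4\sum_{d\mid n}\frac{n}{d}\left(\frac{8}{d}\right)$ is exactly the claimed formula, and small cases ($n=1,2,3,4$ giving $2,6,12,14$) confirm it.
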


\begin{proof}
The derivatve formulas (\ref{eqn:analogue-0-1/4}), (\ref{eqn:analogue-0-3/4}), and  equation (\ref{eqn:relation-1-1/4,3/4(1)}) yield 
\begin{align*}
&
-4\sqrt{2} \pi^2 
\theta^3
\left[
\begin{array}{c}
0 \\
0
\end{array}
\right](0,2\tau)
\theta
\left[
\begin{array}{c}
0 \\
0
\end{array}
\right](0,4\tau)
+
4\sqrt{2} \pi^2 
\theta
\left[
\begin{array}{c}
0 \\
0
\end{array}
\right](0,2\tau)
\theta^3
\left[
\begin{array}{c}
0 \\
0
\end{array}
\right](0,4\tau)  \\
=&
4\pi i 
\frac{d}{d\tau}
\log
\frac
{
\theta
\left[
\begin{array}{c}
1 \\
\frac12
\end{array}
\right]
\theta^2
\left[
\begin{array}{c}
1 \\
\frac14
\end{array}
\right]
}
{
\theta^{\prime}
\left[
\begin{array}{c}
1 \\
1
\end{array}
\right]
}
+
2\pi^2
\theta^4
\left[
\begin{array}{c}
0 \\
0
\end{array}
\right](0,4\tau). 
\end{align*}
The theorem follows from Theorems \ref{thm:4-squares} and \ref{thm:x^2+y^2+z^2+2w^2}. 
\end{proof}

\subsubsection{On $x^2+y^2+z^2+4t_w,$ and $x^2+4t_y+4t_z+4t_w$}

\begin{theorem}
\label{thm:x^2+y^2+z^2+4t_w}
{\it
For each $n\in\mathbb{N}_0,$ set 
\begin{equation*}
M_{1,1,1\textrm{-}4}(n):=
\sharp
\left\{
(x,y,z,w)\in\mathbb{Z}^4 \, | \,
x^2+y^2+z^2+4t_w=n
\right\}. 
\end{equation*}
Then, we have 
\begin{equation*}
M_{1,1,1\textrm{-}4}(n)=
4
\sum_{d|2n+1} \frac{2n+1}{d} \left( \frac{8}{d} \right)
-
2
\sum_{d|2n+1}d \left( \frac{8}{d} \right). 
\end{equation*}
}
\end{theorem}

\begin{proof}
We first note that for each $n\in\mathbb{N},$
\begin{equation*}
3\sigma(n)-\sigma(2n)-2\sigma \left(\frac{n}{2}\right)=0. 
\end{equation*}
The derivatve formulas (\ref{eqn:analogue-0-1/4}), (\ref{eqn:analogue-0-3/4}), and  equation (\ref{eqn:relation-0-1/4,3/4(1)}) yield
\begin{align*}
&
4\sqrt{2} \pi^2 
\theta^3
\left[
\begin{array}{c}
0\\
0
\end{array}
\right](0,2\tau)
\theta
\left[
\begin{array}{c}
1\\
0
\end{array}
\right](0,4\tau) \\
=&
-4\pi i 
\frac{d}{d\tau}
\log
\frac
{
\theta
\left[
\begin{array}{c}
1 \\
\frac12
\end{array}
\right]
\theta^4
\left[
\begin{array}{c}
0 \\
\frac14
\end{array}
\right]
\theta^4
\left[
\begin{array}{c}
0 \\
\frac34
\end{array}
\right]
}
{
\theta^{\prime}
\left[
\begin{array}{c}
1 \\
1
\end{array}
\right]
\theta^4
\left[
\begin{array}{c}
0 \\
\frac34
\end{array}
\right]
}
+
4 \pi^2 
\theta^2
\left[
\begin{array}{c}
0\\
0
\end{array}
\right](0,2\tau)
\theta^2
\left[
\begin{array}{c}
1\\
0
\end{array}
\right](0,4\tau)  \\
&\hspace{30mm}
+
2
\left.
\frac{d^2}{dz^2}
\log
\theta
\left[
\begin{array}{c}
0 \\
\frac14
\end{array}
\right]
(z)
\right|_{z=0}. 
\end{align*}
The theorem follows from equation (\ref{eqn:11-44-log-derivative}) and Jacobi's triple product identity (\ref{eqn:Jacobi-triple}).
\end{proof}

\begin{theorem}
\label{thm:x^2+4t_y+4t_z+4t_w}
{\it
For each $n\in\mathbb{N}_0,$ set 
\begin{equation*}
M_{1\textrm{-}4,4,4}(n):=
\sharp
\left\{
(x,y,z,w)\in\mathbb{Z}^4 \, | \,
x^2+4t_y+4t_z+4t_w=n
\right\}. 
\end{equation*}
Then, we have 
\begin{equation*}
M_{1\textrm{-}4,4,4}(n)=
2
\sum_{d|2n+3} \frac{2n+3}{d} \left( \frac{8}{d} \right)
-
2
\sum_{d|2n+3}d \left( \frac{8}{d} \right). 
\end{equation*}
}
\end{theorem}

\begin{proof}
The derivatve formulas (\ref{eqn:analogue-0-1/4}), (\ref{eqn:analogue-0-3/4}), and  equation (\ref{eqn:relation-0-1/4,3/4(1)}) yield 
\begin{align*}
&
-4\sqrt{2} \pi^2 
\theta^3
\left[
\begin{array}{c}
0 \\
0
\end{array}
\right](0,2\tau)
\theta
\left[
\begin{array}{c}
1 \\
0
\end{array}
\right](0,4\tau)
+
4\sqrt{2} \pi^2 
\theta
\left[
\begin{array}{c}
0 \\
0
\end{array}
\right](0,2\tau)
\theta^3
\left[
\begin{array}{c}
1 \\
0
\end{array}
\right](0,4\tau)  \\
=&
4\pi i 
\frac{d}{d\tau}
\log
\frac
{
\theta
\left[
\begin{array}{c}
1 \\
\frac12
\end{array}
\right]
\theta^2
\left[
\begin{array}{c}
0 \\
\frac14
\end{array}
\right]
}
{
\theta^{\prime}
\left[
\begin{array}{c}
1 \\
1
\end{array}
\right]
}
+
2\pi^2
\theta^4
\left[
\begin{array}{c}
1 \\
0
\end{array}
\right](0,4\tau). 
\end{align*}
The theorem follows from Theorems \ref{thm:4-triangular} and \ref{thm:x^2+2y^2+4t_z+4t_w}. 
\end{proof}




\end{document}